\definecolor{linkred}{rgb}{0.7,0.2,0.2}
\definecolor{linkblue}{rgb}{0,0.2,0.6}
\DeclareFontFamily{OMS}{rsfs}{\skewchar\font'60}
\DeclareFontShape{OMS}{rsfs}{m}{n}{<-5>rsfs5 <5-7>rsfs7 <7->rsfs10 }{}
\DeclareSymbolFont{rsfs}{OMS}{rsfs}{m}{n}
\DeclareSymbolFontAlphabet{\scr}{rsfs}
\DeclareSymbolFontAlphabet{\scr}{rsfs}
\DeclareFontFamily{U}{mathx}{\hyphenchar\font45}
\DeclareFontShape{U}{mathx}{m}{n}{
	<5> <6> <7> <8> <9> <10>
	<10.95> <12> <14.4> <17.28> <20.74> <24.88>
	mathx10
}{}
  \theoremstyle{plain}
  \newtheorem{thmx}{Theorem}
  \renewcommand{\thethmx}{\Alph{thmx}} 
    \newtheorem{thm}{Theorem}[section] 
\newtheorem{proposition}[thm]{Proposition}
\newtheorem{lem}[thm]{Lemma}
\newtheorem{cor}[thm]{Corollary}
  \theoremstyle{definition}
   \newtheorem{dfn}[thm]{Definition}
   \theoremstyle{remark}
\newtheorem{rem}[thm]{Remark}
\numberwithin{equation}{subsection}  
\theoremstyle{plain}
\newlist{thmlist}{enumerate}{1}
\setlist[thmlist]{wide = 0pt, labelwidth = 2em, labelsep*=0em, itemindent = 0pt, leftmargin = \dimexpr\labelwidth + \labelsep\relax, noitemsep,topsep = 1ex, font=\normalfont, label=(\roman*), ref=\thethm.(\roman{thmlisti})}
\newlist{thmenum}{enumerate}{1} 
\setlist[thmenum]{wide = 0pt, labelwidth = 2em, labelsep*=0em, itemindent = 0pt, leftmargin = \dimexpr\labelwidth + \labelsep\relax, noitemsep,topsep = 1ex, font=\normalfont, label=(\roman*), ref=\thethmx.(\roman{thmenumi})}
\crefname{lem}{Lemma}{Lemmas}
\crefname{thm}{Theorem}{Theorems}
\crefname{proposition}{Proposition}{Propositions}
\crefname{dfn}{Definition}{Definitions}
\crefname{rem}{Remark}{Remarks}
\crefname{cor}{Corollary}{Corollaries}
\crefname{corx}{Corollary}{Corollaries}
\crefname{problem}{Problem}{Problems}
\crefname{thmx}{Theorem}{Theorems}
\crefname{claim}{Claim}{Claims}
\crefname{main}{Main Theorem}{Main Theorems}
\def\rank{{\rm rank}}
\def\tsim{\!\!\text{\textasciitilde}}
\newcommand{\cS}{\mathcal{S}}
\newcommand{\cT}{\mathcal{T}}
\newcommand*{\rom}[1]{\expandafter\@slowromancap\romannumeral #1@}
\newcommand{\lowerromannumeral}[1]{\romannumeral#1\relax}
\newcommand{\crefnames}[3]{%
	\@for\next:=#1\do{%
		\expandafter\crefname\expandafter{\next}{#2}{#3}%
	}%
}
\def\oc{\mathscr{O}}  
 \def\fc{\mathcal{F}}
\def\lc{\mathcal{L}}
\def\vc{\mathcal{V}}
\def\cb{\mathbb{C}}
\def\vol{\operatorname{vol}}
\def\as{{a^\star}} \def\es{e^\star}
\def\cb{\mathbb{C}}
\def\as{\mathscr{A}}
\def\gs{\mathscr{G}}
\def\fs{\mathscr{F}}
\def\es{\mathscr{E}}
\def\js{\mathscr{J}}
\def\ls{\mathscr{L}}
\def\hs{\mathscr{H}}
\def\rs{\mathscr{R}}
\def\ts{\mathscr{T}}
\def\dbar{\bar{\partial}}
 \def\d{\partial}
\def\db{\mathbb{D}}
\let\@wraptoccontribs\wraptoccontribs
\begin{document} 

\title[Analytic Shafarevich hyperbolicity conjecture]{On the hyperbolicity of base spaces   for   maximally \\ variational families of smooth projective varieties}

 \author{Ya Deng} 
 \address{Universit\'e de Strasbourg, Institut de Recherche Math\'ematique Avanc\'ee, 	7 Rue Ren\'e-Descartes,
 	67084 Strasbourg, France} 
 
 \curraddr{
 	Institut des Hautes Études Scientifiques,
 	Universit\'e  Paris-Saclay, 35 route de Chartres, 91440, Bures-sur-Yvette, France}
 \email{deng@ihes.fr}
 \urladdr{https://www.ihes.fr/~deng}

\contrib[With an appendix by]{Dan Abramovich}
\address{Department of Mathematics, Box 1917, Brown University, Providence, RI, 02912, 	U.S.A
}

 \email{abrmovic@math.brown.edu}
 \urladdr{http://www.math.brown.edu/~abrmovic/}

	\date{\today} 
	\begin{abstract}
For maximal variational smooth families of projective manifolds  whose general fibers have semi-ample canonical bundle, the  Viehweg hyperbolicity conjecture states that the base spaces of such families are of log general type. This deep conjecture was recently proved by  Campana-P\u{a}un and was later generalized by Popa-Schnell. In this paper we prove that those base spaces are pseudo Kobayashi hyperbolic, as predicted by the Lang conjecture: any complex quasi-projective manifold is pseudo Kobayashi hyperbolic if it is of log general type. As a consequence, we prove the Brody hyperbolicity of moduli spaces of polarized manifolds with semi-ample canonical bundle. This proves a conjecture by Viehweg-Zuo in 2003. We also  prove the Kobayashi hyperbolicity of base spaces for effectively parametrized families of minimal projective manifolds of general type. This generalizes previous work by To-Yeung, in which they further assumed that these families are canonically polarized.   
\end{abstract}
\subjclass[2010]{32Q45, 14D07, 14E99}
\keywords{pseudo Kobayashi hyperbolicity, Brody hyperbolicity, moduli spaces, Viehweg-Zuo question, polarized variation of Hodge structures, Viehweg-Zuo Higgs bundles, Finsler metric, positivity of direct images, Griffiths curvature formula of Hodge bundles}
\maketitle
 
 \tableofcontents

\section{Introduction}\label{introduction}
\subsection{Main theorems}
A   complex space $X$ is \emph{Brody hyperbolic} if there is no non-constant holomorphic map $\gamma:\cb\to X$.   The first result in this paper is the affirmative answer to a conjecture by  by Viehweg-Zuo \cite[Question 0.2]{VZ03} on the Brody hyperbolicity of moduli spaces  for polarized manifolds with semi-ample canonical sheaf. 
\begin{thmx}[Brody hyperbolicity of moduli spaces]\label{VZ question}
	Consider the moduli functor $\mathscr{P}_h$ of polarized manifolds with semi-ample canonical sheaf  introduced by Viehweg \cite[\S 7.6]{Vie95}, where $h$ is the  Hilbert polynomial associated to the polarization $\hs$.  Assume that for some quasi-projective manifold $V$ there exists
	a smooth family $(f_U:U\to V,\hs)\in \mathscr{P}_h(V)$ for which the induced moduli map $\varphi_U:V\to P_h$ 
	is quasi-finite over its image, where $P_h$ denotes to be the  quasi-projective\footnote{The quasi-projectivity of $P_h$ was proved by Viehweg in \cite{Vie95}.} coarse moduli scheme for $\mathscr{P}_h$.   Then the base space $V$ is Brody hyperbolic.
\end{thmx}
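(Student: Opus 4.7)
The approach uses the Viehweg-Zuo Higgs bundle together with a Finsler pseudo-metric whose holomorphic sectional curvature is negatively bounded in all directions where the metric is non-degenerate. The quasi-finiteness of $\varphi_U$ is the input that forces non-degeneracy at every point of $V$, upgrading a pseudo-hyperbolicity statement to genuine Brody hyperbolicity.

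First I would carry out the standard preliminary reductions. Embed $V$ as the complement $Y \setminus D$ of a simple normal crossing divisor in a smooth projective compactification $Y$. Using Viehweg's weak semistable reduction, after replacing $Y$ by a suitable alteration, extend $f_U : U \to V$ to a family $f : X \to Y$ that is semistable over a Zariski neighbourhood of $V$. Since $\mathbb{C}$ is simply connected, such generically finite base changes do not affect the existence or non-constancy of holomorphic maps $\gamma : \mathbb{C} \to V$, so it suffices to prove the statement after these modifications.

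Next I would construct the Viehweg-Zuo logarithmic Higgs bundle $(E,\theta) = \bigoplus_{p+q=n}(E^{p,q},\theta^{p,q})$ on $(Y,D)$. This arises from a cyclic cover associated to a sufficiently large power of the relative dualising sheaf $\omega_{X/Y}$, followed by passage to the graded pieces of the resulting logarithmic variation of Hodge structures. The structural property to exploit is that the iterated Higgs field $\theta^{(k)} : T_Y(-\log D)^{\otimes k} \to E^{n-k,k}\otimes L^{-1}$ encodes the $k$-th order infinitesimal variation of the family, and quasi-finiteness of $\varphi_U$ ensures that, for some $k \leq n$, this map is non-zero in every tangent direction of $T_V$ at a general point — in particular the kernel of the Kodaira-Spencer map is trivial along $V$.

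The Hodge metrics on the graded pieces $E^{p,q}$, suitably combined across orders $k$, then assemble into a Finsler pseudo-metric $F$ on $T_V$. The central analytical step is to invoke the Griffiths curvature formula for Hodge bundles to show that the holomorphic sectional curvature of $F$ is bounded above by a negative constant $-c_0 < 0$ at every point and in every direction where $F$ does not vanish. Given any non-constant $\gamma : \mathbb{C} \to V$, quasi-finiteness of $\varphi_U$ implies $\varphi_U \circ \gamma$ is also non-constant (its fibres being zero-dimensional), so the pulled-back iterated Kodaira-Spencer tensor, and hence $\gamma^{*}F$, is not identically zero. The Schwarz-Ahlfors lemma applied to $\gamma^{*}F$ on $\mathbb{C}$ then forces $\gamma^{*}F \equiv 0$, a contradiction.

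The principal obstacle is the construction of $F$ and the pointwise curvature bound. Viehweg-Zuo's original technique produces a big subsheaf of $\Sym^k \Omega^1_Y(\log D)$, giving algebraic hyperbolicity-type consequences, but converting this algebraic negativity into a Finsler metric with a genuine pointwise negative holomorphic sectional curvature estimate in \emph{all} tangent directions — while simultaneously controlling the degeneration of the Hodge norms along the boundary $D$ via Schmid's nilpotent orbit theorem and Cattani-Kaplan-Schmid asymptotics — is the technical heart of the argument and occupies the bulk of the paper.
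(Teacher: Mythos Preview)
Your outline has the right architecture (VZ Higgs bundle, negatively curved Finsler metric, Ahlfors--Schwarz), but there is a genuine gap in the step where you pass from quasi-finiteness of $\varphi_U$ to non-degeneracy of the pulled-back Finsler metric along an arbitrary non-constant entire curve $\gamma$.

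You write that quasi-finiteness ensures the iterated map $\tau_k$ is non-zero in every tangent direction at a general point, and later that for any non-constant $\gamma$ the composition $\varphi_U\circ\gamma$ being non-constant forces $\gamma^*F\not\equiv 0$. Neither implication is justified. The map $\tau_1$ in the VZ Higgs bundle is \emph{not} the Kodaira--Spencer map of the original family: the Higgs bundle comes from an auxiliary variation of Hodge structures on a cyclic cover, and $\tau_1$ factors through the original Kodaira--Spencer map followed by further maps whose injectivity, in the earlier literature, was established only via Kodaira--Nakano or Bogomolov--Sommese vanishing (hence only when the fibers have ample or big and nef canonical bundle). For the semi-ample case in the statement, quasi-finiteness of $\varphi_U$ gives maximal variation but gives no direct control on $\tau_1$. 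Moreover, even if $\tau_1$ is injective at a \emph{general} point, a non-constant but non--Zariski-dense $\gamma$ could land entirely in the degeneracy locus of $F$, so $\gamma^*F\equiv 0$ is not excluded by your argument.

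The paper closes this gap in two steps that are absent from your proposal. First, it invokes the stratified induction of Viehweg--Zuo: it suffices to rule out \emph{Zariski dense} entire curves, since otherwise one restricts the family to the Zariski closure of $\gamma(\mathbb{C})$ and inducts on dimension. Second --- and this is the paper's main new input --- it proves a \emph{generic local Torelli theorem} (Theorem~D): $\tau_1$ is generically injective for \emph{every} VZ Higgs bundle, regardless of the positivity of the fiber canonical bundle. This is established not from quasi-finiteness but by a curvature argument: one studies the norm of the canonical section $s\in H^0(Y,\ls^{-1}\otimes E^{n,0})$ under the singular metric $h_g^\alpha$, uses that this norm degenerates along $D\cup S$ so attains an interior maximum, and reads off injectivity of $\tau_1$ at that point from the Griffiths curvature formula combined with strict positivity of $\Theta_{\ls,g_\alpha}$. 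Once $\tau_1$ is generically injective, a Zariski dense $\gamma$ automatically has $\tau_1|_{\mathbb{C}}\not\equiv 0$, and then either the Finsler construction or the earlier VZ/PTW argument produces the contradiction with Ahlfors--Schwarz.
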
 

A complex space $X$ is called \emph{pseudo Kobayashi hyperbolic}, if $X$ is hyperbolic modulo a  proper Zariski closed subset $\Delta\subsetneq X$, that is, the Kobayashi pseudo distance $d_X:X\times X\to [0,+\infty[$ of
$X$ satisfies that $d_X(p,q)>0$ for every pair of distinct points $p,q\in X$ not both contained in $\Delta$. 
 In particular, $X$ is \emph{pseudo Brody hyperbolic}: any non-constant holomorphic map $\gamma:\cb\to X$ has image   $\gamma(\cb)\subset \Delta$. When such $\Delta$ is an empty set, this definition reduces to the usual definition of \emph{Kobayashi
 hyperbolicity}, and  the Kobayashi pseudo distance $d_{ X}$ is a distance.  

In this paper we indeed prove a stronger result than \cref{VZ question}. 
\begin{thmx}\label{Deng}
	Let $f_U:U\to V$ be a smooth projective morphism  between   complex quasi-projective manifolds  with connected fibers. Assume that the general fiber  of $f_U$  has semi-ample canonical bundle, and $f_U$ is of maximal variation, that is, the general fiber of $f_U$  can only be birational to  at most countably  many other 
	fibers.  Then the base space $V$ is pseudo Kobayashi hyperbolic.
\end{thmx}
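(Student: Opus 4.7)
The plan is to adapt the Viehweg--Zuo Higgs-bundle strategy, enhanced so that the resulting negatively curved Finsler pseudometric degenerates only on a proper Zariski closed subset of $V$. Once such a pseudometric exists on the log tangent bundle $T_Y(-\log D)$ for a good compactification $(Y,D)$ of $V$, a generalised Ahlfors--Schwarz lemma will force every non-constant entire curve, and more generally every pair of points of zero Kobayashi pseudodistance, into its degeneracy locus, giving pseudo Kobayashi hyperbolicity.

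First I would fix a smooth projective compactification $Y\supseteq V$ with $D:=Y\setminus V$ a simple normal crossings divisor and, after blow-ups, extend $f_U$ to a projective morphism $f\colon X\to Y$ with mild (semistable-in-codimension-one) behaviour over $D$. Following Viehweg--Zuo, Campana--P\u{a}un, and Popa--Schnell, for $N$ sufficiently divisible a general section of a suitably high power of an ample subsheaf of $f_*\omega_{X/Y}^{\otimes N}$ determines a branched cyclic cover of $X$ which, after resolution, carries a polarised variation of Hodge structure on a dense Zariski open of $Y$. Standard Schmid--Griffiths theory produces the associated logarithmic \emph{Viehweg--Zuo Higgs bundle} $(E,\theta)=\bigoplus_{p+q=n}(E^{p,q},\theta^{p,q})$ on $(Y,D)$, together with Hodge metrics on each summand whose curvature is controlled by Griffiths' formula. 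Since $f_U$ has only semi-ample canonical bundle, not necessarily ample, the construction of $(E,\theta)$ must be combined with the positivity theorems for direct images of Popa--Schnell and Campana--P\u{a}un.

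The crux, and the main obstacle, is to convert $(E,\theta)$ into a Finsler pseudometric on $T_Y(-\log D)$ of negative holomorphic sectional curvature whose vanishing locus is contained in a proper Zariski closed subset $\Delta\subsetneq V$. Maximal variation of $f_U$ ensures, via Popa--Schnell's strengthening of Viehweg's $Q_n^*$ condition, that the iterated Higgs field $\theta^n\colon \mathrm{Sym}^n T_Y(-\log D)\to \mathrm{End}(E)$ is generically injective; the work is to upgrade this qualitative statement into a quantitative curvature estimate. I would construct, for each $q=1,\dots,n$, a Finsler semimetric $F_q$ on $T_Y(-\log D)|_V$ by pulling back the Hodge metric on $E^{n-q,q}$ along the $q$-th iterate $\theta^q$, and then assemble them into a weighted sum $F=\sum_q a_q F_q$ with coefficients chosen so that the negative curvature contributions coming from $E^{n-q,q}$ dominate the positive error terms produced by the differentials $\theta^{p,q}$, in the spirit of To--Yeung but with an additional weighting forced by the only-semi-ample setting. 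By construction, $F$ vanishes on a tangent vector exactly when every $\theta^q$ annihilates it, which is a closed algebraic condition on the fibres of the Higgs data; hence the degeneracy locus is Zariski closed in $V$ and, by the generic injectivity above, a proper subset.

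Finally, given such a negatively curved Finsler pseudometric $F$ on $T_Y(-\log D)$ with degeneracy locus $\Delta\subsetneq V$, an Ahlfors--Schwarz argument à la Royden--Demailly bounds the Kobayashi--Royden infinitesimal pseudometric of $V$ from below by a positive constant times $F$ outside $\Delta$, whence $d_V(p,q)>0$ for every pair of distinct points $p,q\in V$ not both lying in $\Delta$, which is the desired pseudo Kobayashi hyperbolicity. The two principal difficulties I anticipate are (a) running the Higgs-bundle construction in the merely semi-ample rather than canonically polarised setting, where one must leverage recent positivity results for $f_*\omega_{X/Y}^{\otimes N}$ in place of direct access to a relative ample line bundle, and (b) arranging the weighted combination $F$ so that its zero locus really is algebraic in $V$ rather than only a proper analytic subset, which is what distinguishes pseudo Kobayashi hyperbolicity from mere pseudo Brody hyperbolicity.
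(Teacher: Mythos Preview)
Your overall architecture matches the paper's: build a Viehweg--Zuo Higgs bundle, pull back Hodge metrics along the iterated maps $\tau_k:\operatorname{Sym}^k T_Y(-\log D)\to L^{-1}\otimes E^{n-k,k}$ to get Finsler semimetrics $F_k$, take a weighted convex sum $F=(\sum k\alpha_k F_k^2)^{1/2}$ so that the Schumacher/To--Yeung curvature cancellation makes $F$ negatively curved, and finish with Ahlfors--Schwarz. Two points need correction.

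First, a minor one: the VZ Higgs bundle is only available on a birational model $\tilde V$ of $V$ (already in Viehweg--Zuo and Popa--Taji--Wu), not on $V$ itself. The paper handles this with a short bimeromorphic criterion: if $\mu:\tilde V\to V$ is birational and $F$ is a negatively curved Finsler metric on $\tilde V$ with degeneracy locus $\Delta_F$, then $V$ is Kobayashi hyperbolic modulo $\mu(\Delta_F\cup\mathrm{Ex}(\mu))$. You should not expect to work directly on $V$.

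Second, and this is the genuine gap: your argument that the degeneracy locus of $F$ is a \emph{proper} subset does not go through when $K_{X_y}$ is merely semi-ample. You invoke ``Popa--Schnell's strengthening of Viehweg's $Q_n^*$ condition'' for generic injectivity of the iterated Higgs field, but no such statement is available in this generality; the existing injectivity arguments (Viehweg--Zuo for ample $K$, Popa--Taji--Wu for big and nef $K$) rely on Kodaira--Nakano or Bogomolov--Sommese vanishing on the fibres, and both fail when $K_{X_y}$ is only semi-ample. The paper's key new ingredient is a \emph{generic local Torelli} theorem for abstract VZ Higgs bundles: $\tau_1$ is generically injective regardless of the fibre geometry. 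The proof is a maximum-principle argument rather than a vanishing theorem. The section $s\in H^0(Y,L^{-1}\otimes E^{n,0})$ supplied by the VZ data has $|s|^2_{h_g^\alpha}\to 0$ along the boundary $D\cup S$ (by the norm estimates for the lower canonical extension), so $|s|^2_{h_g^\alpha}$ attains an interior maximum at some $y_0$; computing $-\sqrt{-1}\,\partial\bar\partial\log|s|^2$ there via the Griffiths curvature formula and using the strict positivity of $\Theta_{L,g_\alpha}$ forces $\sqrt{-1}\{\tilde\theta_{n,0}(s),\tilde\theta_{n,0}(s)\}$ to be strictly positive at $y_0$, hence $\tau_1$ injective at $y_0$ and therefore on a Zariski open set. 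This single step is what makes $F_1$, and hence $F$, positively definite off a proper Zariski closed set, and it is precisely what separates the semi-ample case from the previously understood big-and-nef case. Your anticipated difficulty~(a) concerns the \emph{construction} of the Higgs bundle, which is indeed handled by direct-image positivity; but the properness of the degeneracy locus is a separate issue that those positivity results do not address.
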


As  a byproduct, we reduce  the pseudo Kobayashi hyperbolicity of varieties to the existence of certain negatively curved Higgs bundles (which we call   \emph{Viehweg-Zuo Higgs bundles} in \cref{def:VZ}). This provides a main building block for our recent work \cite{Den19} on the hyperbolicity of bases of log Calabi-Yau pairs.

Another aim of the paper  is to prove affirmatively a
folklore conjecture on the \emph{Kobayashi hyperbolicity} for moduli spaces of minimal projective manifolds of general type, which can be thought of as an analytic refinement of \cref{VZ question} in the case that  fibers have big and nef canonical bundle.  
\begin{thmx}\label{main}
	Let \( f_U:U\to V \) be a smooth projective  family of minimal projective manifolds of general type over a quasi-projective manifold $V$. Assume that   $f_U$ is \emph{effectively parametrized},
	that is, the Kodaira-Spencer map 
	\begin{align}\label{KS}
	\rho_y: \ts_{V,y}\to H^1(U_y, \ts_{U_y})
	\end{align}
	is injective for each point \(y\in V \),
	where  $\ts_{U_y}$ denotes the tangent bundle of the fiber  $U_y:=f_U^{-1}(y)$.  
	Then the base space \(V\) is Kobayashi hyperbolic.
\end{thmx}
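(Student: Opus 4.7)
The plan is to deduce \cref{main} from \cref{Deng} by a careful analysis of how the degeneracy locus of the Finsler metric used to prove the latter is controlled by the Kodaira--Spencer map \eqref{KS}.

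\textbf{Step 1 (Reduction to the setup of \cref{Deng}).} Since the fibers of $f_U$ are minimal projective manifolds of general type, their canonical bundles are big and nef, hence semi-ample by the base-point-free theorem. Moreover, pointwise injectivity of \eqref{KS} forces the family to have maximal variation: if a general fiber were birational to uncountably many other fibers, the induced moduli map would contract positive-dimensional analytic subsets, contradicting the injectivity of $\rho_y$ on tangent directions along them. Hence $f_U$ satisfies the hypotheses of \cref{Deng}, so $V$ is pseudo Kobayashi hyperbolic modulo some proper Zariski closed subset $\Delta \subsetneq V$, and it suffices to prove $\Delta = \emptyset$.

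\textbf{Step 2 (Structure of the hyperbolicity proof to be exploited).} I expect \cref{Deng} to be established by constructing, on a smooth compactification $Y \supset V$ with simple normal crossing boundary $D = Y\setminus V$, a Viehweg--Zuo Higgs bundle $(E,\theta) = \bigoplus_p (E^{n-p,p},\theta_p)$ together with a singular Finsler metric $F$ on $\ts_Y(-\log D)$ whose holomorphic sectional curvature, pulled back along any entire curve $\gamma:\cb\to V$ avoiding the degeneracy locus of $F$, is bounded above by a strictly negative constant. An Ahlfors--Schwarz argument then yields Kobayashi hyperbolicity modulo the restriction to $V$ of the locus $\{v\in \ts_Y(-\log D): F(v)=0\}$, and the exceptional $\Delta$ in \cref{Deng} is precisely this locus.

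\textbf{Step 3 (Identifying the degeneracy locus with $\ker \rho$).} The Finsler metric $F$ is defined by combining Hodge norms of iterated applications of $\theta$ to symmetric powers of tangent vectors; in particular, at $y\in V$ the condition $F(v)=0$ for $v\in \ts_{V,y}$ implies vanishing of the first iterate $\theta_1(v)\in E^{n-1,1}|_y$. The construction of $(E,\theta)$ factors $\theta_1$ through (a twist of) the classical Kodaira--Spencer map $\rho_y: \ts_{V,y}\to H^1(U_y,\ts_{U_y})$ of $f_U$: the lowest-degree piece of the Higgs field on a Viehweg--Zuo Higgs bundle is, up to an injective map coming from multiplication by a global section of a pluri-canonical twist, identified with $\rho_y$. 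Under the effective parametrization hypothesis $\rho_y$ is injective at every $y\in V$, so $\theta_1(v)\neq 0$ for every nonzero $v\in \ts_{V,y}$, and hence $F(v)>0$. Consequently the degeneracy locus of $F$ does not meet $V$, forcing $\Delta=\emptyset$ and yielding Kobayashi hyperbolicity of $V$.

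\textbf{Main obstacle.} The crux is making the identification in Step 3 precise. The Viehweg--Zuo Higgs bundle is not produced directly from the variation of Hodge structures of $f_U$, but from auxiliary Hodge bundles obtained through cyclic covers, semistable reduction and Kawamata-type coverings of the compactified family. Consequently $\theta_1$ is a composition of several maps, and one must verify that the auxiliary constructions do not introduce a nontrivial kernel over $V$ beyond that of $\rho_y$; in other words, that injectivity of \eqref{KS} is transferred intact to injectivity of $\theta_1|_V$. A secondary technical issue is controlling the singularities of $F$ along $D$ so that the Ahlfors--Schwarz comparison, applied to arbitrary holomorphic discs in $V$ (not merely entire curves), globalizes pointwise positivity of $F$ into positivity of the Kobayashi pseudo-distance on the non-compact base $V$.
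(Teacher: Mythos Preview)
Your proposal has a genuine gap that you partially identify in your ``Main obstacle'' paragraph but underestimate. A single Viehweg--Zuo Higgs bundle is \emph{not} constructed on a compactification $Y\supset V$ as you assume in Step~2: it lives on a compactification of a \emph{birational model} $\nu:\tilde{V}\to V$, and moreover the Higgs field has log poles along $D+S$ where $S$ is an auxiliary divisor arising from the cyclic cover. Concretely, the VZ Higgs bundle is built from a hypersurface $H\in|\ell K_{X/Y}-\ell f^*\ls|$, and $S$ records the locus over which $H$ fails to be transverse to the fibers. The Finsler metric $F$ degenerates along $D\cup S$ (see \cref{bounded}), and the factorization of $\tau_1$ through the Kodaira--Spencer map that you invoke in Step~3 is only valid over $V_0:=Y\setminus(D\cup S)$, where $H$ is fiberwise smooth. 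Thus injectivity of $\rho_y$ at \emph{every} $y\in V$ buys you nothing at points of $\nu(S)\cup\nu\big({\rm Ex}(\nu)\big)$, and you cannot conclude $\Delta=\varnothing$ from a single construction.

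The paper's route is fundamentally different and does not proceed by shrinking the $\Delta$ produced in \cref{Deng}. Instead, it establishes a bimeromorphic criterion (\cref{bimeromorphic}): $V$ is Kobayashi hyperbolic provided that for \emph{each} $y\in V$ one can find \emph{some} birational model $\nu_y:\tilde{V}_y\to V$ isomorphic at $y$ carrying a negatively curved Finsler metric positively definite at $\nu_y^{-1}(y)$. One then needs, for each $y$, a VZ Higgs bundle whose auxiliary divisor $S$ and exceptional locus both avoid $y$. This requires \emph{moving} the hypersurface $H$: for any prescribed $y\in V$ one must produce $H$ transverse to the fiber $X_y$. The substantial work (\cref{desired global}, \cref{cor:main hyper}, and the $\mathbb{Q}$-mild reduction of \Cref{appendix}) is to show that, after passing to a fiber product $U^r\to V$ and a suitable compactification $f:X\to Y$, the sheaf $f_*(\ell K_{X/Y})\otimes A_Y^{-\ell}$ is globally generated over \emph{all} of $V$ for $\ell\gg 0$, so that such an $H$ always exists. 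This is where the hypothesis that fibers are minimal of general type (not merely of semi-ample canonical bundle) is genuinely used; your Step~1 reduction to \cref{Deng} discards exactly the extra positivity needed.
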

\subsection{Previous related results}
\cref{Deng} is closely related to the \emph{Viehweg hyperbolicity conjecture}: let $f_U:U\to V$ be a maximally variational smooth projective family of  projective manifolds with semi-ample canonical bundle over a quasi-projective manifold $V$, then the base $V$ must be of log-general type.  In the series of   works \cite{VZ01,VZ02,VZ03},   Viehweg-Zuo    constructed  in a first step  a big subsheaf of   symmetric log differential forms of the base (so-called \emph{Viehweg-Zuo sheaves}). Built on this result,  Viehweg hyperbolicity conjecture was shown by Kebekus-Kov\'acs \cite{KK08a,KK08b,KK10} when $V$ is a surface or threefold,  by Patakfalvi \cite{Pat12} when $V$ is compact or admits a non-uniruled compactification, and it was    completely solved by Campana-P\u{a}un  \cite{CP15}, in which they  proved a vast generalization of 
the famous generic semipositivity result of Miyaoka (see also \cite{CP15b,CP16,Schn17} for  other different proofs). More recently, using deep theory of Hodge modules, Popa-Schnell \cite{PS17} constructed Viehweg-Zuo
sheaves on the base space $V$ of the smooth family $f_U:U\to V$ of projective manifolds whose geometric generic fiber admits a good minimal model. Combining this with the aforementioned theorem of Campana-P\u{a}un,  they proved that such
base space $V$ is of log general type. Therefore, \cref{Deng} is predicted by  a famous conjecture of Lang (cf. \cite[Chapter VIII. Conjecture 1.4]{Lan91}), which stipulates that a complex quasi-projective manifold
is pseudo Kobayashi hyperbolic if and only if it is of log general type. To our knowledge, Lang's conjecture is by now   known for the trivial case of curves,  for general hypersurface $X$ in the complex projective space $\mathbb{C}P^n$ of high degrees \cite{Bro17,Dem18,Siu15} as well as their complements $\cb P^n\setminus X$ \cite{BD19}, for projective manifolds whose universal cover carries a bounded strictly plurisubharmonic
function \cite{DB18}, for quotients of  bounded  (symmetric) domains   \cite{Rou16,CRT19,CDG19}, and for subvarieties  on abelian varieties  \cite{Yam18}.   \cref{Deng} therefore provides some new evidences for Lang's conjecture. 

\cref{VZ question} was first proved by  Viehweg-Zuo \cite[Theorem 0.1]{VZ03} for moduli spaces of \emph{canonically polarized} manifolds.    Combining the approaches by Viehweg-Zuo \cite{VZ03} with those by Popa-Schnell \cite{PS17}, very recently,    Popa-Taji-Wu \cite[Theorem 1.1]{PTW18}  proved \cref{VZ question}  for  moduli spaces of  \emph{polarized}   manifolds with big and semi-ample canonical bundles.  As we will see below, our work owes a lot to the general strategies and techniques in  their  work \cite{VZ03,PTW18}.    

The Kobayashi hyperbolicity of moduli spaces $\mathcal{M}_g$ of
compact Riemann surfaces of genus   $g\geqslant 2$ has long been known to us by  the work of Royden and Wolpert \cite{Roy74,Wol86}. The first important breakthrough on higher dimensional generalizations was made by  To-Yeung \cite{TY14}, in which they  proved  Kobayashi hyperbolicity of the base $V$ considered in \cref{main}  when  the canonical bundle  \(K_{U_y}\) of each fiber $U_y:=f^{-1}_U(y)$ of $f_U:U\to V$ is further assumed to be ample (see also  \cite{BPW17,Sch17} for alternative  proofs). 
Differently  from the approaches in \cite{VZ03,PTW18},  their strategy is to study the curvature of the generalized Weil-Petersson metric for families of canonically polarized manifolds,  along the approaches initiated by Siu \cite{Siu86} and later developed by Schumacher \cite{Sch12}.   For the smooth family  of Calabi-Yau manifolds (resp. orbifolds),
Berndtsson-P\u{a}un-Wang \cite{BPW17} and Schumacher  \cite{Sch17} (resp. To-Yeung \cite{TY18})  proved the Kobayashi hyperbolicity of the base once this family is assumed to be effectively parametrized. 

Recently, Lu, Sun, Zuo and the author \cite{DLSZ} proved a big Picard type theorem for moduli spaces of polarized manifolds with semi-ample canonical sheaf.  A crucial step of the  proof relies on the \enquote*{generic local Torelli-type theorem} in \cref{mainvz}. \cref{mainvz} also inspired us a lot in our more recent work \cite{Den20} on the big Picard theorem for varieties admitting variation of Hodge structures.



\subsection{Strategy of the proof}
For the smooth  family $f_U:U\to V$  of canonically polarized manifolds  with maximal variation,   Viehweg-Zuo  \cite{VZ03} 
constructed  certain negatively twisted Higgs bundles (which we call   \emph{Viehweg-Zuo Higgs bundles} in \cref{def:VZ})  \((\tilde{\es},\tilde{\theta}):=(\bigoplus_{q=0}^{n}\ls^{-1}\otimes E^{n-q,q},\bigoplus_{q=0}^{n} \vvmathbb{1}\otimes\theta_{n-q,q}) \),  over some smooth projective compactification $Y$ of a certain birational model $\tilde{V}$ of $V$, where \(\ls\) is some big and nef line bundle on \(Y\), and $\big(\bigoplus_{q=0}^{n}E^{n-q,q},\bigoplus_{q=0}^{n}\theta_{n-q,q}\big)$ is a Higgs bundle induced by  a polarized variation of Hodge structure defined over a Zariski open set of $\tilde{V}$. In a recent   paper \cite{PTW18}, Popa-Taji-Wu introduced several new inputs to develop    Viehweg-Zuo's strategy in \cite{VZ03}, which enables them to   construct    those Higgs bundles on base spaces of smooth families whose  geometric generic fiber admits a good minimal model  (see also \cref{thm:existence} for a weaker statement as well as a slightly different proof following the original construction by Viehweg-Zuo). As we will see in the main content, the Viehweg-Zuo Higgs bundles (VZ Higgs bundles for short) are the crucial tools in proving our main results. 

When each  fibers $U_y:=f_U^{-1}(y)$ of the smooth family $f_U:U\to V$ considered in \cref{Deng} have ample or big and nef canonical bundles,  let us briefly recall the  general  strategies in proving the \emph{pseudo Brody hyperbolicity} of $V$    in \cite{VZ03,PTW18}. A certain sub-Higgs bundle $(\fs,\eta)$ of \((\tilde{\es},\tilde{\theta}) \) with log poles contained in the divisor $D:=Y\setminus  \tilde{V}$ gives rise to a morphism 
\begin{eqnarray}\label{eq:iterate}
\tau_{\gamma,k}: \ts_\cb^{\otimes k}\to \gamma^*(\ls^{-1}\otimes E^{n-k,k})
\end{eqnarray}
for any entire curve \(\gamma:\cb\to \tilde{V} \).  If  \(\gamma:\cb\to \tilde{V} \) is Zariski dense, by    the Kodaira-Nakano vanishing  (when $K_{U_y}$ is ample) and  Bogomolov-Sommese vanishing theorems  (when $K_{U_y}$ is big and nef), 
one can verify that  \(\tau_{\gamma,1}(\cb)\not\equiv 0 \). Hence there is some $m>0$ (depending on $\gamma$) so that $\tau_{\gamma,m}$ factors through $\gamma^*(\ls^{-1}\otimes N^{n-m,m})$, where $N^{n-m,m}$ is the kernel of the Higgs field $\theta_m:E^{n-m,m}\to E^{n-m-1,m+1}\otimes \Omega_Y(\log D)$. Applying Zuo's theorem \cite{Zuo00} on the negativity of $N^{n-m,m}$,   a certain positively curved   metric  for $\ls$  can produce a singular hermitian metric on \(\ts_\cb \) with the \emph{Gaussian curvature} bounded from above by a negative constant, which contradicts with the (Demailly's) Ahlfors-Schwarz lemma \cite[Lemma 3.2]{Dem97}.   However,  this approach did not provide enough information for the Kobayashi pseudo distance of the base $V$. Moreover, the use of vanishing theorem cannot show   \(\tau_{\gamma,1}(\cb)\not\equiv 0 \) when fibers of $f_U:U\to V$ is not minimal   manifolds of general type.   

One of the main results in the present paper is to apply  the VZ Higgs bundle to construct a  (possibly degenerate) Finsler metric $F$  on  some birational model $\tilde{V}$ of the base $V$, whose   holomorphic sectional curvature  is bounded above by a negative constant (say   \emph{negatively curved Finsler metric} in \cref{negatively curved}).  A bimeromorphic criteria for pseudo Kobayashi hyperbolicity in \cref{pseudo Kobayashi} states that, 
the base is pseudo Kobayashi hyperbolic if $F$ is \emph{positively definite} over a Zariski dense open set.  Let us now briefly explain our idea of the constructions. By factorizing through  some sub-Higgs sheaf $ (\mathscr{F},\eta) \subseteq (\tilde{\es},\tilde{\theta})$ with logarithmic poles \emph{only} along the boundary divisor $D:=Y\setminus \tilde{V}$, one can define a morphism for any \(k=1,\ldots,n \):
\begin{eqnarray}\label{intro: iterated Kodaira}
\tau_k: {\rm Sym}^k \ts_Y(-\log D) \rightarrow \ls^{-1}\otimes E^{n-k,k},
\end{eqnarray}
where $\ls$ is some big  line bundle over $Y$ equipped with a \emph{positively curved} singular hermitian metric $h_\ls$.  Then for each \(k\), the  hermitian metric $h_k$  on \(\tilde{\es}_k:=\ls^{-1}\otimes  E^{n-k,k}\) induced by  the Hodge metric as well as $h_\ls$ (see \cref{singular metric} for details) will give rise to a Finsler metric \(F_k\) on \(\ts_Y(-\log D)\)  by taking the $k$-th root of the pull-back $\tau_k^*h_k$.  However,    the holomorphic sectional curvature of  \(F_k\)  might not be   negatively curved. 
Inspired by the aforementioned  work of  Schumacher, To-Yeung and Berndtsson-P\u{a}un-Wang \cite{Sch12,Sch17,TY14,BPW17} on the curvature computations of  generalized Weil-Petersson metric for families of canonically polarized manifolds,  we  define  a convex sum of Finsler metrics
\begin{eqnarray} \label{eq:Finsler convex}
F:=(\sum_{k=1}^{n}{\alpha_k}F^2_k)^{1/2}  \quad \mbox{with} \ \alpha_1, \dots,\alpha_n\in \mathbb{R}^+ 
\end{eqnarray}
 on \(\ts_Y(-\log D) \), to offset the unwanted positive terms in the curvature \(\Theta_{\tilde{\es}_k} \) by negative contributions from
the \(\Theta_{\tilde{\es}_{k+1} }\)  (the  last order term was \(\Theta_{\tilde{\es}_{n} }\) is always semi-negative by the Griffiths curvature formula).  We proved in   \cref{uniform}  that for proper \(\alpha_1, \dots,\alpha_n>0\), the holomorphic sectional curvature of \(F\) is negative and bounded away from zero. To summarize, we  establish an \emph{algorithm} for the construction of Finsler metrics via VZ Higgs bundles. 

To prove \cref{Deng}, we first note that the VZ Higgs bundles over some birational model $\tilde{V}$ of the base  space  $V$  were constructed by Popa-Taji-Wu in their elaborate work \cite{PTW18}. Let $Y$ be some  smooth projective compactification   $\tilde{V}$ with simple normal crossing boundary $D:=Y\setminus \tilde{V}$. By our construction of negatively curved Finsler metric $F$ defined in \eqref{eq:Finsler convex} via VZ Higgs bundles, to show that $F$ is \emph{positively definite} over some Zariski open set, it suffices to prove that $\tau_1:\ts_Y(-\log D)\to \ls^{-1}\otimes E^{n-1,1}$  defined in \eqref{intro: iterated Kodaira} is \emph{generically injective}  (which we call \emph{generic local Torelli}   for VZ Higgs bundles in \cref{sec:VZ}). 
This was proved in \cref{mainvz}, by using the degeneration of Hodge metric and the curvature properties of Hodge bundles.   In particular, we  show that the generic injectivity of $\tau_1$ is indeed an intrinsic feature  of all VZ Higgs bundles (not related to the Kodaira dimension of fibers of $f$!). By a standard inductive argument in \cite{VZ03,PTW18}, one can easily show that \cref{Deng} implies \cref{VZ question}.


Now we will explain the strategy to prove \cref{main}.  Note that the VZ Higgs bundles are only constructed over some birational model $\tilde{V}$ of $V$, which is not Kobayashi hyperbolic in general. 
This motivates us first to establish a \emph{bimeromorphic criteria for Kobayashi hyperbolicity} in \cref{bimeromorphic}.    
Based on this criteria, in order to apply the VZ Higgs bundles  
 to prove the Kobayashi hyperbolicity of the base $V$ in \cref{main}, 
 it suffices to show that 
\begin{enumerate}[label=($\spadesuit$),leftmargin=0.6cm]
	\item \label{several blow}for any given point $y$ on the base $V$, there exists a  VZ Higgs  bundle  $(\tilde{\es},\tilde{\theta})$ constructed over some birational model $\nu:\tilde{V}\to V$, such that \(\nu^{-1}:V\dashrightarrow \tilde{V}\) is defined at \(y\).
\end{enumerate}
\begin{enumerate}[label=($\clubsuit$),leftmargin=0.6cm]
	\item \label{condition:Finsler}The negatively curved Finsler metric $F$ on $\tilde{V}$ defined in \eqref{eq:Finsler convex} induced by the above VZ Higgs  bundle $(\tilde{\es},\tilde{\theta})$  is positively definite at the point $\nu^{-1}(y)$.
\end{enumerate}
Roughly speaking,  the idea is to produce  an abundant supply of \emph{fine} VZ Higgs bundles to construct sufficiently many negatively curved Finsler metrics, which are   obstructions to the degeneracy of Kobayashi pseudo distance $d_V$ of $V$. 
This is much more demanding than  the Brody hyperbolicity and Viewheg hyperbolicity of $V$, which can be shown by the existence of \emph{only one} VZ Higgs bundle on an arbitrary birational model of $V$, as mentioned in \cite{VZ02,VZ03,PS17,PTW18}.

Let us briefly explain how we achieve both \ref{several blow} and \ref{condition:Finsler}. 

As far as we see in \cite{VZ03,PTW18}, in their  construction of VZ Higgs bundles,  one has to  blow-up the base  for several times (indeed twice). Recall that the basic setup in  \cite{VZ03,PTW18}   is the  following: after passing to some smooth birational model ${f}_{\tilde{U}}:\tilde{U}=U\times_V{\tilde{V}}\to \tilde{V}$ of $f_U:U\to V$,  one can find a smooth projective compactification  $f:X\to Y$ of  $ \tilde{U}^r\to \tilde{V}$ 
\begin{align}\label{semistable}
\xymatrix{
	U^r \ar[d] &\tilde{U}^r \ar[l]_{\stackrel{{\rm bir}}{\sim}} \ar[d]  \ar[r]^\subseteq &X \ar[d]^{f}      \\
	V   &\tilde{V}\ar[l]^-{\nu}_{\stackrel{{\rm bir}}{\sim}} \ar[r]^\subseteq &Y}
\end{align}
so that there exists (at least) one hypersurface 
\begin{align}\label{eq: smooth hypersurface}
H \in \big\lvert \ell K_{X/Y}-\ell f^*\ls\big\lvert \quad {\rm for\ some} \quad \ell\gg 0 
\end{align}  
which is \emph{transverse} to the general fibers of $f$.  Here \(\ls \) is some big and nef line bundle over \(Y\), and $ {U}^r:= {U}\times_{ {V}}\times \cdots\times_{ {V}} {U}$ (resp. $\tilde{U}^r$)  is the $r$-fold fiber product of $f_{{U}}: {U}\to  {V}$ (resp. $f_{\tilde{U}}:\tilde{U}\to \tilde{V}$).    The VZ Higgs bundle is indeed the logarithmic Higgs  bundles associated
to the Hodge filtration of an auxiliary variation of polarized Hodge structures
constructed by taking the middle dimensional relative de Rham cohomlogy on
the cyclic cover of $X$ ramified along   $H$.

In order to find such $H$ in \eqref{eq: smooth hypersurface}, a crucial step in \cite{VZ03,PTW18} is the use of 
\emph{weakly semi-stable reduction} by Abramovich-Karu \cite{AK00} so that, after changing the birational model   $U\to V$ by performing certain (uncontrollable)  base change $\tilde{U}:=U\times_V\tilde{V}\to \tilde{V}$, one can   find a ``good" compactification $X\to Y$ of $\tilde{U}^r\to \tilde{V}$ and a  finite  dominant morphism $W\to Y$ from a smooth projective manifold $W$ such that the base change $X\times_YW\to W$ is birational to a \emph{mild morphism} $Z\to W$, which is in particular flat with reduced  fibers (even fonctorial under fiber products). 
For our goal \ref{several blow},  we need a more refined  control of the \emph{alteration for the base} in the weakly semistable reduction \cite[Theorem 0.3]{AK00}, which remains unknown at the moment.   {Fortunately, as was suggested to us and proved in \Cref{appendix} by  Abramovich, using moduli of  Alexeev stable maps one can establish a \emph{$\mathbb{Q}$-mild reduction} for the family $U\to V$  in place of the \emph{mild reduction} in \cite{VZ03}, so that we can also find a ``good" compactification $X\to Y$ of $U^r\to V$  without   passing  the birational models $ \tilde{V}\to V$ as in \eqref{semistable}. This is the main theme of \Cref{appendix}.}

Even if we can apply  $\mathbb{Q}$-mild reduction to avoid the first blow-up of the base as in \cite{VZ03,PTW18},   the second blow-up is in general inevitable.  Indeed, the \emph{discriminant} of  the new family $Z_H\to Y\supset V$ obtained  by taking the cyclic cover along $H$ in \eqref{eq: smooth hypersurface} is in general  not  normal crossing.   
One thus has to blow-up this  discriminant locus of $Z_H\to Y$  to make it normal crossing as  in \cite{PTW18}.  Therefore, to  assure \ref{several blow},  it then suffices to show that there exists a   compactification $ {f}: {X}\to  {Y}$ of the smooth family $U^r\to V$ so that for some sufficiently ample line bundle $\as$ over $Y$, 
\begin{equation}\label{eq:enough ample}
f_*(mK_{ {X}/ {Y}})\otimes \as^{-m} \mbox{ is globally generated over $V$ for some $m\gg 0$.}\tag{$\ast$}
\end{equation}
Indeed, for any given point $y\in V$, by \eqref{eq:enough ample} one can find $H$ transverse to the fiber $X_y:=f^{-1}(y)$, and thus  the new family $Z_H\to Y$ will be smooth over an open set containing $y$. 
To the bests of our knowledge,  \eqref{eq:enough ample}  was only known to us when the moduli is canonically polarized   \cite[Proposition 3.4]{VZ02}.   \cref{sec:positivity} is devoted to the proof of  \eqref{eq:enough ample} for the family $U\to V$ in \cref{main} (see \cref{desired global} below).  This  in turn achieves \ref{several blow}.     

To achieve \ref{condition:Finsler}, our  idea is to take \emph{different cyclic coverings}  by  \enquote*{moving}  $H$ in \eqref{eq: smooth hypersurface},     to produce different  \enquote*{fine} VZ Higgs bundles.     For  any given point \(y\in V\),  by  \ref{several blow},  one can take a birational model $\nu:\tilde{V}\to V$ so that $\nu$ is isomorphic at $y$, and there exists a VZ Higgs bundle $(\tilde{\es},\tilde{\theta})$ on  the normal crossing compactification $Y\supset \tilde{V}$.   
To prove that the induced negatively curved Finsler metric   $F$ is positively definite at $\tilde{y}:=\nu^{-1}(y)$, by our definition of   \(F\) in \eqref{eq:Finsler convex}, it suffices to show that 
\(\tau_1 \) defined in \eqref{intro: iterated Kodaira} is   \emph{injective} at \(\tilde{y}\) in the sense of $\cb$-linear map between complex vector spaces
\[
\tau_{1,\tilde{y}}:\ts_{\tilde{V},\tilde{y}}\xrightarrow{\simeq} \ts_{Y}(-\log D)_{\tilde{y}}\xrightarrow{\rho_{\tilde{y}}}H^1(X_{\tilde{y}}, \ts_{X_{\tilde{y}}}) \xrightarrow{\varphi_{\tilde{y}}} \tilde{\es}_{1,\tilde{y}}.
\]  
As we will see in \cref{construction}, when $H$ in \eqref{eq: smooth hypersurface} is properly chosen (indeed being transverse to the fiber $X_y$) which is ensured by \eqref{eq:enough ample},  $\varphi_{\tilde{y}}$ is injective at ${\tilde{y}}$. Hence $\tau_{1,\tilde{y}}$ is injective by our  assumption of \emph{effective parametrization}  (hence $\rho_{\tilde{y}}$ is injective)  in \cref{main}.  This is our strategy to prove \cref{main}.

 \subsection*{Acknowledgments}       
This paper is    the merger of my previous two articles   	 \cite{Den18,Den18b}  with slight improvements. It owes a lot to the celebrated work \cite{VZ02,VZ03,PTW18}, to which I  express my gratitudes. 
I would like to sincerely thank    Professors   Dan Abramovich, S\'ebastien Boucksom, H{\aa}kan Samuelsson  Kalm, Kalle Karu,  Mihai P\u{a}un, Mihnea Popa, Georg Schumacher, J\"org Winkelmann, Chenyang Xu, Xiaokui Yang, Kang Zuo,  and Olivier Benoists,  Junyan Cao, Chen Jiang,  Ruiran Sun,   Lei Wu, Jian Xiao for answering my questions  and very fruitful  discussions. I thank in particular Junyan Cao and Lei Wu for their careful reading of the early draft of the paper and numerous suggestions.  I am particularly grateful to Professor  Dan Abramovich  for suggesting the $\mathbb{Q}$-mild reduction,   and writing \Cref{appendix} which  provides a  crucial step for the present paper.  
  I thank   Professors   Damian Brotbek  and Jean-Pierre Demailly for their  encouragements and supports.   Lastly, I thank the referee for his careful reading of the paper and his suggestions on rewriting the paper completely.
\section*{Notations and conventions.} 
Throughout this article we will work over the complex number field $\cb$.
\begin{itemize}[leftmargin=0.4cm]
	\item An  algebraic fiber space\footnote{Here we follow the definition in \cite{Mor85}.}  (or fibration for short)  $f:X\to Y$ is a surjective projective morphism between   projective manifolds with connected geometric fibers. 
	Any $\mathbb{Q}$-divisor $E$ in $X$ is said to be $f$-exceptional if $f(E)$ is an algebraic variety of codimension at least two in $Y$. 
	\item We say that  a morphism $f_U:U\to V$ is a \emph{smooth family} if $f_U$ is a surjective smooth projective morphism with connected fibers between quasi-projective varieties.  
	\item For any surjective morphism $Y'\to Y$, and the algebraic fiber space $f:X\to Y$, we denote by $(X\times_YY')^{\tsim}$ the  (unique) irreducible component (say the \emph{main component}) of $X\times_Y Y'$ 	which dominates $Y'$. 
	\item  Let $\mu:X'\to X$ be a birational morphism from a   projective manifold $X'$ to a singular variety $X$.  $\mu$ is called a \emph{strong desingularization} if $\mu^{-1}(X^{\rm reg})\to X^{\rm reg}$ is an isomorphism. Here $X^{\rm reg}$ denotes to be the smooth locus of $X$.
	\item For any birational morphism $\mu:X'\to X$, the \emph{exceptional locus}  is the inverse image of the smallest closed set of $X$ outside of which $\mu$ is an isomorphism, and denoted by ${\rm Ex}(\mu)$.
	\item  	Denote by $X^r:=X\times_{Y} \cdots\times_{Y} X$   the $r$-fold fiber product of the fibration \(f:X\to Y\),  $(X^r)^{\tsim}$   the \emph{main component} of $X^r$ dominating $Y$,  and $X^{(r)}$  a \emph{strong desingularization}  of  $(X^r)^{\tsim}$.
	\item For any quasi-projective manifold $Y$,  a Zariski open subset $Y_0\subset Y$ is called a \emph{big open set} of $Y$ if and only if ${\rm codim}_{Y\setminus Y_0}(Y)\geqslant 2$.	
	\item A singular hermitian metric $h$ on the line bundle  $L$ is said to be \emph{positively curved} if the curvature current $\Theta_h(L)\geqslant 0$.
\end{itemize}
\section{Brody hyperbolicity of the base}\label{Higgs}
To begin with, let us introduce the definition of \emph{Viehweg-Zuo Higgs bundles} over  quasi-projective manifolds in an abstract way  following \cite{VZ03,PTW18}.  Then we prove a generic local Torelli   for VZ Higgs bundles. We will show that based on the previous work by Viehweg-Zuo and Popa-Taji-Wu, this generic local Torelli theorem suffices to prove \cref{VZ question}.  

\subsection{Abstract Viehweg-Zuo Higgs bundles}\label{sec:VZ}
The definition we present below follows from the formulation in \cite{VZ02,VZ03} and \cite[Proposition 2.7]{PTW18}. 
\begin{dfn}[Abstract Viehweg-Zuo Higgs bundles]\label{def:VZ}
	Let $V$ be a quasi-projective manifold, and let $Y\supset V$ be a projective compactification of $V$ with the boundary $D:=Y\setminus V$ simple normal crossing. A \emph{Viehweg-Zuo Higgs bundle on $V$} is a logarithmic  Higgs bundle $(\tilde{\es},\tilde{\theta})$  over $Y$  consisting of the following data:
	\begin{thmlist}
		\item  a divisor $S$ on $Y$ so that $D+S$ is simple normal crossing,
		\item \label{VZ big}  a big and nef line bundle $\ls$ over $Y$ with $\mathbf{B}_+(\ls)\subset D\cup S $, 
		\item   a Higgs bundle  $( {\es}, {\theta}):=\big(\bigoplus_{q=0}^{n}E^{n-q,q},\bigoplus_{q=0}^{n}\theta_{n-q,q}\big)$ induced by the lower canonical extension of a polarized VHS defined over $Y\setminus (D\cup S)$, 
		\item a sub-Higgs sheaf $(\fs,\eta)\subset (\tilde{\es},\tilde{\theta})$,
	\end{thmlist}
	which	satisfy the following properties.
	\begin{enumerate}[leftmargin=0.5cm]
		\item The Higgs bundle $(\tilde{\es},\tilde{\theta}):=(\ls^{-1}\otimes {\es},\vvmathbb{1}\otimes {\theta})$. In  particular, $\tilde{\theta}:\tilde{\es}\to \tilde{\es}\otimes \Omega_{Y}\big(\log (D+S)\big)$, and $\tilde{\theta}\wedge\tilde{\theta}=0$.
		\item The sub-Higgs sheaf $(\fs,\eta)$ has log poles only on the boundary $D$, that is, $\eta:\fs\to \fs\otimes\Omega_{Y}(\log D)$.
		\item \label{contain trivial}Write $\tilde{\es}_k:=\ls^{-1}\otimes E^{n-k,k}$, and denote by $\fs_k:=\tilde{\es}_k\cap \fs$. Then the first stage $\fs_0$ of $\fs$ is an \emph{effective line bundle}. In other words, there exists a non-trivial morphism $\oc_Y\to \fs_0$.
	\end{enumerate}
\end{dfn}
As shown in \cite{VZ02}, by iterating $\eta$ for $k$-times, we obtain
$$
\fs_0\xrightarrow{\overbrace{\eta\circ\cdots\circ \eta}^{k\, \text{times}}} \fs_k\otimes \big(\Omega_Y(\log D)\big)^{\otimes k}.
$$
Since $\eta\wedge\eta =0$, the above morphism factors through $ \fs_k\otimes {\rm Sym}^k\Omega_Y(\log D)$, and by \eqref{contain trivial} one thus obtains 
$$
\oc_Y\to \fs_0\to \fs_k\otimes {\rm Sym}^k\Omega_Y(\log D)\to \ls^{-1}\otimes E^{n-k,k}\otimes {\rm Sym}^k\Omega_Y(\log D).
$$
Equivalently, we have a morphism
\begin{align}\label{iterated Kodaira2}
\tau_k:  {\rm Sym}^k \ts_Y(-\log D)\rightarrow \ls^{-1}\otimes E^{n-k,k}.
\end{align}
It was proven in \cite[Corollary 4.5]{VZ02} that $\tau_1$ is always non-trivial.
We say that a VZ Higgs bundle  satisfies the \emph{generic local Torelli} if $\tau_1:\ts_Y(-\log D)\to \ls^{-1}\otimes E^{n-1,1}$ in \eqref{iterated Kodaira2}  is  generically injective. As we will see in \cref{constructionvz}, in \cref{mainvz} we prove that  the generic  local  Torelli  holds for any VZ Higgs bundles. 

\subsection{A quick tour on Viehweg-Zuo's construction}\label{sec:existence}
For   the smooth family $U\to V$ in \cref{Deng,VZ question}, it was shown in  \cite{VZ02} and \cite[Proposition 2.7]{PTW18} that  there is a VZ Higgs bundle over some  birational model $\tilde{V}$ of $V$. Indeed, using the deep theory of mixed Hodge modules, Popa-Taji-Wu \cite{PTW18} can even construct VZ Higgs bundles over the bases of maximal variational smooth families whose geometric generic fiber admits a good minimal model. Since we need to study the precise loci where $\tau_1$ is injective in the proof of \cref{main}, in this subsection we  recollect Viehweg-Zuo's construction on VZ Higgs bundles over the base space $V$ (up to a birational model and a projective compactification) in \cref{Deng}. We refer the readers to see \cite{VZ02} and \cite{PTW18} for more details. In \cref{construction}, we show how to refine this construction   to prove \cref{main}.  Let us mention that we do not clarify any originality for this subsection. 
\begin{thm}\label{thm:existence}
	Let  $U\to V$ be the smooth family in \cref{Deng}.  Then after replacing $V$ by a birational model $\tilde{V}$,  there is a smooth compactification $Y\supset \tilde{V}$ and a VZ Higgs bundle over $\tilde{V}$.
\end{thm}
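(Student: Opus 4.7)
The plan is to follow closely the construction of Viehweg-Zuo \cite{VZ02,VZ03}, later revisited by Popa-Taji-Wu \cite{PTW18}, and to package the resulting data in the abstract framework of \cref{def:VZ}. The construction proceeds in four steps: (1) build a good compactification of a suitable birational model of $f_U$; (2) produce a big and nef line bundle $\ls$ on the base together with a distinguished hypersurface $H$; (3) extract a polarized variation of Hodge structure by a cyclic-cover construction; (4) carve out the sub-Higgs sheaf whose first stage contains $\oc_Y$.

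For the first step, I would replace $V$ by a birational model $\tilde V$ and pull back $f_U$ to $f_{\tilde U}:\tilde U\to\tilde V$. Form the $r$-fold fibre product $\tilde U^r\to\tilde V$ for $r$ large enough to activate the positivity of Viehweg's theorem, take a strong desingularisation, and extend the resulting smooth family to a projective morphism $f:X\to Y$ with $D:=Y\setminus\tilde V$ simple normal crossing. Applying the weak semistable reduction of Abramovich-Karu \cite{AK00}, one may further arrange (after an additional birational modification of $\tilde V$) that $f$ is mild in the sense of \cite{VZ02}, so that $f_*\omega_{X/Y}^{\otimes m}$ behaves well under fibre products and enjoys Viehweg's weak positivity.

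For the second and third steps, the maximal variation of $f_U$ together with Viehweg's weak positivity implies that $\det f_*\omega_{X/Y}^{\otimes m}$ is big for all sufficiently divisible $m$; Viehweg's fibre-product trick upgrades this to an inclusion $\ls^{\otimes N}\hookrightarrow f_*\omega_{X/Y}^{\otimes mN}$, where $\ls$ is a big and nef line bundle on $Y$ with $\mathbf{B}_+(\ls)\subset D\cup S$ for some divisor $S$ making $D+S$ simple normal crossing. Setting $\ell:=mN$, a global section of this inclusion determines a hypersurface
\begin{equation*}
H\in\bigl|\,\ell K_{X/Y}-\ell f^*\ls\,\bigr|.
\end{equation*}
Taking the $\ell$-fold cyclic cover $Z\to X$ branched along $H$ and a log resolution $g:\bar Z\to Y$ which is smooth over $Y_0:=Y\setminus(D\cup S)$, the middle relative de Rham cohomology of $g|_{Y_0}$ supports a polarized variation of Hodge structure; its lower canonical extension yields the logarithmic Higgs bundle
\begin{equation*}
(\es,\theta)=\Bigl(\bigoplus_{q=0}^{n}E^{n-q,q},\;\bigoplus_{q=0}^{n}\theta_{n-q,q}\Bigr),\qquad\theta:\es\to\es\otimes\Omega_Y\bigl(\log(D+S)\bigr).
\end{equation*}
Twisting by $\ls^{-1}$ produces $(\tilde\es,\tilde\theta)$ as required by \cref{def:VZ}.

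It remains to construct the sub-Higgs sheaf $(\fs,\eta)\subset(\tilde\es,\tilde\theta)$ with $\oc_Y\subset\fs_0$ and with $\eta$ having logarithmic poles only along $D$. The defining section of $H$ yields $\oc_Y\hookrightarrow f_*\oc_X(\ell K_{X/Y}-\ell f^*\ls)$, which via the Esnault-Viehweg eigenspace decomposition of the cyclic cover lifts to $\oc_Y\hookrightarrow \tilde\es_0=\ls^{-1}\otimes E^{n,0}$; iterating $\tilde\theta$ and saturating gives $(\fs,\eta)$. The main obstacle, and the technical core of the whole construction, is to check that $\eta$ acquires poles \emph{only} along $D$ and not along the auxiliary discriminant $S$. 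This is carried out by comparing the Higgs field produced by the Kodaira-Spencer edge maps of $f$ (which manifestly have log poles only along $D$) with the Higgs field coming from the VHS on $\bar Z$, following the strategy of \cite[\S 4]{VZ02} and its refinement in \cite[Proposition 2.7]{PTW18}. Since at this stage we only need existence on \emph{some} birational model, we may freely shrink $\tilde V$ and enlarge $S$ along the way; the finer control over where $\nu^{-1}:V\dashrightarrow\tilde V$ is defined, needed for \cref{main}, is the content of \cref{construction}.
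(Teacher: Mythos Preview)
Your outline is essentially the same strategy as the paper's proof, which also follows \cite{VZ02,VZ03,PTW18}: set up the compactification $f:X\to Y$ of a fibre-product family, produce $H$ from positivity of direct images, build the Hodge-theoretic Higgs bundle $(\es,\theta)$ via the cyclic cover, and extract the sub-Higgs sheaf. Two points of imprecision are worth flagging. First, the hypersurface in the paper lies in $|\ell\Omega^n_{X/Y}(\log\Delta)-\ell f^*\ls+\ell E|$ with an $f$-exceptional effective divisor $E$ supported over the boundary; you should not expect to get away without it, since the direct image sheaves are only reflexive and the global generation in \cite{VZ03,PTW18} is obtained up to such an $E$. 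Second, and more substantively, your description of $(\fs,\eta)$ as obtained by ``iterating $\tilde\theta$ and saturating'' from $\oc_Y\hookrightarrow\tilde\es_0$ would a priori produce poles along $D+S$, since $\tilde\theta$ itself has poles there. The paper instead introduces a second, auxiliary Higgs bundle $\bigl(\bigoplus_q F^{n-q,q},\bigoplus_q\tau_{n-q,q}\bigr)$ built directly from $R^qh_*$ of twisted log differentials of $X/Y$ (so with poles only along $D$), together with comparison maps $\rho_{n-q,q}:F^{n-q,q}\to\ls^{-1}\otimes E^{n-q,q}$ fitting into a commutative square with $\tau$ and $\tilde\theta$; one then \emph{defines} $\fs_q:=\rho_{n-q,q}(F^{n-q,q})$, and the commutativity of that square is precisely what forces $\eta=\tilde\theta|_\fs$ to have poles only along $D$. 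You clearly have this mechanism in mind when you invoke the Kodaira--Spencer edge maps of $f$, but the actual definition of $\fs$ is via these images, not via iteration of $\tilde\theta$.
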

\begin{proof}
	By \cite{VZ03,PTW18}, one can take a birational morphism $\nu:\tilde{V}\to V$ and a smooth compactification $f:X\to Y$ of $U^r\times_V\tilde{V}\to \tilde{V}$ 
	so that there exists a hypersurface
	\begin{align}\label{eq:cyclic}
	H\in |\ell\Omega_{X/Y}^n(\log \Delta)-\ell f^*\ls+\ell E|,\quad n:=\dim X-\dim Y
	\end{align} 
	with $\ls$ a big and nef line bundle over $Y$ 
	satisfying  that
	\begin{enumerate}[leftmargin=0.6cm] 
		\item the complement $D:=Y\setminus \tilde{V}$ is simple normal crossing.
		\item \label{enu:smooth}The hypersurface $H$ is smooth over some Zariski open set $V_0\subset \tilde{V}$ with $D+S:=Y\setminus V_0$ simple normal crossing.
		\item The divisor $E$ is  effective and $f$-exceptional divisor with $f(E)\cap V_0=\varnothing$. 
		\item The augmented base locus $\mathbf{B}_+(\ls)\cap V_0=\varnothing$.
	\end{enumerate} 
	Here we denote by $\Delta:=f^{-1}(D)$ so that $(X,\Delta)\to (Y,D)$ is a log morphism.   Within this basic setup, let us first recall      two Higgs bundles in the theorem following \cite[\S 4]{VZ02}. 
	Leaving out a codimension two subvariety of
	${Y}$ supported on $D+S$, we  assume that 
	\begin{itemize}[leftmargin=0.6cm] 	
		\item  the morphism $f$ is flat, and $E$ in \eqref{eq:cyclic}  disappears. 
		\item The divisor $D+S$ is smooth. Moreover,  both $\Delta $ and $\Sigma=f^{-1}S$ are relative normal crossing. 
	\end{itemize}  
	Set $\lc:=\Omega_{X/Y}^n(\log \Delta)$.  
	Let $\delta:W \to X$  be a blow-up of $X$ with centers in $\Delta+\Sigma$ such that  $\delta^*(H+\Delta+\Sigma)$ is a normal crossing divisor. One thus obtains a
	cyclic covering of $\delta^*H$,  by taking the $\ell$-th root out of
	$\delta^*H$. Let $Z$ 
	to be
	a strong desingularization of this covering, which is smooth over $V_0$ by \eqref{enu:smooth}.     We denote the compositions by $h:W\to Y$ and $g:Z\to Y$, whose restrictions   to
	$V_0$ are both smooth.  Write $\Pi:=g^{-1}(S\cup D)$ which can be assumed to be normal crossing.    Leaving out   codimension two subvariety supported $D+S$ further, we assume that $h$ and $g$ are also flat, and both $\delta^*(H+\Delta+\Sigma)$ and   $\Pi$ are relative normal crossing.     
	Set
	$$
	F^{n-q,q}:= R^qh_*\Big(\delta^*\big(\Omega_{X/Y}^{n-q}(\log \Delta)\big)\otimes \delta^*\lc^{-1}\otimes \oc_W\big(\lfloor \frac{\delta^*H}{\ell} \rfloor\big)\Big) / {\rm torsion}.
	$$ 
	It was shown in \cite[\S 4]{VZ02} that there exists a natural  edge
	morphism
	\begin{align}\label{eq:edge}
	\tau_{n-q,q}: F^{n-q,q}\to F^{n-q-1,q+1}\otimes \Omega_Y(\log D),
	\end{align}
	which gives rise to  the first Higgs bundle   $\big(\bigoplus_{q=0}^{n}F^{n-q,q},\bigoplus_{q=0}^{n}\tau_{n-q,q}\big)$ defined over a big open set of $Y$ containing $V_0$.
	
	%
	Write $Z_0:=Z\setminus \Pi$.  
	Then the local system
	$ R^n g_*\cb_{\upharpoonright Z_0}$  
	extends to a locally
	free sheaf $\vc$ 
	on 
	$Y$ (here $Y$ is projective rather than the big open set!) equipped with the logarithmic connection
	$$
	\nabla:\vc\to \vc\otimes \Omega_Y\big(\log (D+S)\big),  
	$$ 
	whose  eigenvalues of the residues   lie  in $[0,1)$ (the so-called \emph{lower canonical extension}). 
	By Schmid's \emph{nilpotent orbit theorem} \cite{Sch73}, the Hodge filtration of $ R^n g_*\cb_{\upharpoonright Z_0}$   extends to a filtration $\vc:=\fc^0\supset \fc^1\supset \cdots\supset \fc^{n}$ of \emph{subbundles} so that their graded sheaves $E^{n-q,q}:=\fc^{n-q}/\fc^{n-q+1}$ 
	are also  {locally free}, and  there exists
	$$
	\theta_{n-q,q}:E^{n-q,q}\to E^{n-q-1,q+1}\otimes \Omega_{Y}(\log D+S). 
	$$  
	This defines the second Higgs bundle 
	$
	\big(\bigoplus_{q=0}^{n}E^{n-q,q},\theta_{n-q,q}\big)
	$. 
	As observed in \cite{VZ02,VZ03},   $E^{n-q,q}=R^qg_*\Omega^{n-q}_{Z/Y}(\log \Pi)
	$ over a big open set of $Y$ by the   theorem of Steenbrink \cite{Ste77,Zuc84}. By the construction of the cyclic cover $Z$,  this in turn implies  the following  commutative diagram over a  big open set of $Y$:
	\begin{align}\label{dia:two Higgs relation}
	\xymatrixcolsep{4.3pc}\xymatrix{
		\ls^{-1}\otimes	E^{n-q,q}  \ar[r]^-{\vvmathbb{1}\otimes \theta_{n-q,q}}   &		\ls^{-1}\otimes E^{n-q-1,q+1}\otimes \Omega_Y\big(\log (D+S)\big)  \\
		F^{n-q,q}  \ar[u]^{\rho_{n-q,q}}  \ar[r]^-{  \tau_{n-q,q}}      &     F^{n-q-1,q+1}\otimes \Omega_Y(\log D)   \ar[u]_{\rho_{n-q-1,q+1}\otimes \iota}}
	\end{align} 
	as shown in  \cite[Lemma 6.2]{VZ03} (cf. also \cite[Lemma 4.4]{VZ02}).   
	
	Note that all the objects are defined on a big open set of  $Y$ except for $
	\big(\bigoplus_{q=0}^{n}E^{n-q,q},\theta_{n-q,q}\big)
	$, which are defined on the whole $Y$.  Following \cite[\S 6]{VZ03}, for every $q=0,\ldots, n$, we define $F^{n-q,q}$ to be 
	the reflexive hull, and  the morphisms  
	$\tau_{n-q,q}$ and  $\rho_{n-q,q}$ extend  naturally.  

	To conclude that $\big(\bigoplus_{q=0}^{n}\ls^{-1}\otimes E^{n-q,q},\bigoplus_{q=0}^{n}\vvmathbb{1}\otimes  {\theta}_{n-q,q}\big)$ is a VZ Higgs bundle as in \cref{def:VZ},  we have to introduce a sub-Higgs sheaf with log poles supported on $D$. Write $\tilde{\theta}_{n-q,q}:=\vvmathbb{1}\otimes\theta_{n-q,q}$ for short. Following \cite[Corollary 4.5]{VZ02} (cf. also \cite{PTW18}), for each $q=0,\ldots,n$, we define a coherent torsion-free sheaf $\fs_q:=\rho_{n-q,q}(F^{n-q,q})\subset E^{n-q,q}$ . 
	By  $F^{n,0}\supset \oc_Y$,  $\fs_0\supset \oc_Y$.  By \eqref{eq:edge} and \eqref{dia:two Higgs relation}, one has
	$$
	\tilde{\theta}_{n-q,q}:\fs_q\to \fs_{q+1}\otimes \Omega_{Y}(\log D),
	$$
	and let us by $\eta_{q}$ the restriction of  $\tilde{\theta}_{n-q,q}$ to $\fs_q$.  Then $(\fs,\eta):=\big(\bigoplus_{q=0}^{n}\fs_q,\bigoplus_{q=0}^{n}\eta_{q}\big)$ is a sub-Higgs bundle of $(\tilde{\es},\tilde{\theta}):=\big(\bigoplus_{q=0}^{n}\ls^{-1}\otimes E^{n-q,q},\bigoplus_{q=0}^{n}\tilde{\theta}_{n-q,q}\big)$. 
\end{proof}

\subsection{Proper metrics for logarithmic Higgs bundles} 
We adopt the same notations as \cref{def:VZ} in the rest of \cref{Higgs}. As is well-known, $\es$  can be endowed with the Hodge metric \(h\)  induced by the polarization, which may blow-up around the simple normal crossing boundary \(D+S\).  However,  according to the work of Schmid,  Cattani-Schmid-Kaplan and Kashiwara \cite{Sch73,CKS86,Kas85}, $h$ has  \emph{mild singularities} (at most logarithmic singularities), and as proved in  \cite[\S 7]{VZ03} (for unipotent monodromies) and \cite[\S 3]{PTW18} (for quasi-unipotent monodromies), one  can  take a proper  singular metric \(g_\alpha\) on \(\ls\) such that the induced singular hermitian metric \(g_\alpha^{-1}\otimes h \) on \(\tilde{\es}:=\ls^{-1}\otimes \es\) is locally bounded from above. Before we summarize the above-mentioned results in \cite[\S 3]{PTW18}, we introduce some notations in \emph{loc. cit.}

Write the simple normal crossing divisor \(D=D_1+\cdots+D_k \) and \(S=S_1+\cdots+S_\ell \). Let \(f_{D_i}\in H^0\big(Y,\oc_Y(D_i)\big)\) and \(f_{S_i}\in H^0\big(Y,\oc_Y(S_i)\big)\) be the canonical section defining \(D_i\) and \(S_i\). We fix  smooth hermitian metrics  \(g_{D_i}\) and \(g_{S_i}\) on \(\oc_Y(D_i)\) and \(\oc_Y(S_i)\). Set
\[
r_{D_i}:=- \log \lVert f_{D_i}\rVert^2_{g_{D_i}}, \quad r_{S_i}:=- \log \lVert f_{S_i}\rVert^2_{g_{S_i}},
\]
and define
\[
r_D:=\prod_{i=1}^{k}r_{D_i}, \quad r_S:=\prod_{i=1}^{\ell}r_{S_i}.
\]
Let $g$ be a singular hermitian metric with analytic singularities of the  big and nef  line bundle \(\ls\) such that \(g\) is smooth on $Y\setminus \mathbf{B}_+(\ls)\supset Y\setminus D\cup S$,  and the curvature current 
$
\sqrt{-1}\Theta_{g}(\ls)\geqslant \omega
$
for some smooth K\"ahler form \(\omega \) on $Y$. For \(\alpha\in \mathbb{N}\),  define
\[
g_\alpha:=g\cdot (r_D\cdot r_S)^\alpha
\]
The following proposition is a slight variant of \cite[Lemma 3.1, Corollary 3.4]{PTW18}.\noindent
\begin{proposition}[\!\protect{\cite{PTW18}}]\label{singular metric}
	When \(\alpha\gg 0 \), after rescaling \(f_{D_i} \) and \(f_{S_i}\), there exists a continuous, positively definite hermitian form \(\omega_\alpha\) on \(\ts_{Y}(-\log {D}) \) such that
	\begin{thmlist} 
		\item\label{estimate}  over $V_0:=Y\setminus D\cup S$,  the curvature form
		\[
		\sqrt{-1}\Theta_{g_\alpha}(\ls)_{\upharpoonright V_0} \geqslant r_D^{-2}\cdot \omega_{\alpha\upharpoonright V_0}.
		\]
		\item \label{bounded} The singular hermitian metric \(h_g^\alpha:=g_\alpha^{-1}\otimes h \) on \(\ls^{-1}\otimes \es\) is locally bounded  on \(Y\), and smooth outside \((D+S)  \). Moreover, \(h_g^\alpha\) is degenerate  on \(D+S \). 
		\item \label{new bound} The singular hermitian metric  \(r_D^2h_g^\alpha \) on  \(\ls^{-1}\otimes \es\) is also locally bounded  on \(Y\).  \qed
	\end{thmlist}
\end{proposition}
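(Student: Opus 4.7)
The plan is to follow the Viehweg-Zuo-Popa-Taji-Wu approach of direct curvature computation, combined with the Cattani-Kaplan-Schmid norm estimates for the Hodge metric. The three items are independent in nature: (i) is a purely local analytic curvature estimate, while (ii) and (iii) are size estimates that rely on Schmid's asymptotic analysis of period maps.

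First I would compute the curvature of $g_\alpha$. Since $g_\alpha = g\cdot (r_D r_S)^\alpha$, one has
\[
\sqrt{-1}\Theta_{g_\alpha}(\ls)=\sqrt{-1}\Theta_g(\ls)-\alpha\sum_i \sqrt{-1}\partial\bar\partial\log r_{D_i}-\alpha\sum_j \sqrt{-1}\partial\bar\partial\log r_{S_j},
\]
and a standard calculation using $\partial\bar\partial r_{D_i}=\sqrt{-1}\Theta_{g_{D_i}}(\oc(D_i))$ yields
\[
-\sqrt{-1}\partial\bar\partial\log r_{D_i}=-\frac{\sqrt{-1}\Theta_{g_{D_i}}(\oc(D_i))}{r_{D_i}}+\frac{\sqrt{-1}\,\partial r_{D_i}\wedge\bar\partial r_{D_i}}{r_{D_i}^{2}},
\]
and analogously for $r_{S_j}$. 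After rescaling the sections $f_{D_i},f_{S_j}$ so that $r_{D_i},r_{S_j}\gg 1$ on a neighborhood of $D+S$, the first (bounded, possibly negative) terms are absorbed into $\sqrt{-1}\Theta_g(\ls)\geq \omega$, and the dominant positive contributions are the Poincaré-type forms $\partial r_{D_i}\wedge\bar\partial r_{D_i}/r_{D_i}^{2}$. Choosing local adapted coordinates so that $D_i=\{z_i=0\}$, this form behaves like $\frac{\sqrt{-1}\,dz_i\wedge d\bar z_i}{|z_i|^2(\log|z_i|^2)^2}$, which is precisely what one needs to dominate $r_{D}^{-2}\omega_\alpha$ on the log-tangent vectors $z_i\partial/\partial z_i$. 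This motivates defining
\[
\omega_\alpha := \alpha\sum_i r_{D_i}^{-2}\cdot r_{D}^{2}\cdot \sqrt{-1}\,\partial r_{D_i}\wedge\bar\partial r_{D_i}+(\text{contributions from }S_j\text{ and }\omega),
\]
suitably arranged to be a continuous positive definite hermitian form on $\ts_Y(-\log D)$ (the factor $r_D^{2}$ offsets the global scaling in (i)). Verifying positive-definiteness is a local computation on log-coordinate neighborhoods.

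For (ii) and (iii), I would invoke Schmid's nilpotent orbit theorem together with the Cattani-Kaplan-Schmid norm estimates: since $(\es,h)$ is the lower canonical extension of a polarized VHS on $V_0$, for any local frame the Hodge norms grow at most like $\prod(\log|z_k|)^{N_k}$ near $D+S$, i.e.\ are bounded by a power of $r_D\cdot r_S$. Consequently,
\[
h_g^\alpha = g^{-1}\cdot (r_D r_S)^{-\alpha}\cdot h
\]
is locally comparable to $(r_D r_S)^{-\alpha+O(1)}$ times a locally bounded factor (noting that $g^{-1}$ has analytic singularities only on $\mathbf{B}_+(\ls)\subset D\cup S$, which can be absorbed in the $r_D r_S$ factor after possibly increasing $\alpha$). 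Taking $\alpha$ larger than the CKS exponents gives (ii); the same estimate with the additional factor $r_D^{2}$ yields (iii), since $r_D^{2}\cdot(r_D r_S)^{-\alpha}$ is still locally bounded for $\alpha$ sufficiently large.

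The main obstacle I anticipate is (ii), namely making precise the \emph{quasi-unipotent} case of the Hodge-metric estimates. For unipotent monodromy the bound is the classical one of Schmid, but for the lower canonical extension with quasi-unipotent (not necessarily unipotent) monodromy one needs the Kashiwara/CKS refinement; this is where the careful formulation in \cite[\S3]{PTW18} is essential. Once this input is in place, the rest reduces to the curvature bookkeeping sketched above and an elementary verification that the required majorations of the form $r_D^{a}(r_D r_S)^{-\alpha}=O(1)$ hold for all large $\alpha$.
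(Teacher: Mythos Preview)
The paper does not give its own proof of this proposition: it is stated with a \qed\ and attributed to \cite[Lemma 3.1, Corollary 3.4]{PTW18}. The paper's only commentary is that the result ``mainly relies on the asymptotic behavior of the Hodge metric for lower canonical extension'' with quasi-unipotent monodromy, and then supplies that ingredient separately as \cref{Hodge metric} (the norm estimate $|s|_{\rm hod}\leqslant C\prod(-\log|t_i|)^\alpha$). Your outline matches this exactly: the curvature bookkeeping for (i) via $\partial\bar\partial\log r_{D_i}$ and Poincar\'e-type forms is the standard Viehweg--Zuo computation, and for (ii)--(iii) you correctly isolate the CKS/Kashiwara norm bound on the lower canonical extension as the essential input, including the quasi-unipotent subtlety. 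So your approach is the same as what the paper points to; there is nothing to add.
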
 
\begin{rem}
	It follows from \cref{singular metric} that both \(h_g^\alpha\) and \(r_D^2h_g^\alpha \) can be seen as   Finsler metrics  on \(\ls^{-1}\otimes \es\) which are degenerate  on \({\rm Supp}(D+S) \), and positively definite on \(V_0\).
\end{rem}
Although the last statement of \cref{bounded} is not explicitly stated in \cite{PTW18}, it can be easily seen from the proof of \cite[Corollary 3.4]{PTW18}. \cref{singular metric} mainly relies on the  asymptotic  behavior of the Hodge metric for \emph{lower canonical extension} of a variation of Hodge structure (cf. \cref{Hodge metric} below) when the monodromy around the boundaries are only quasi-unipotent. 
\begin{thm}[\!\protect{\cite[Lemma 3.2]{PTW18}}]\label{Hodge metric}
	Let $\mathcal{H}=\fc^0\supset \fc^1\supset \cdots\supset \fc^N\supset 0$  be a variation of Hodge structures  defined over  $(\Delta^*)^p\times \Delta^q$, where $\Delta$ (resp. $\Delta^*$) is the (resp. punctured) unit disk. Consider the lower canonical extension $^l\fc^\bullet$ over $\Delta^{p+q}\supset (\Delta^*)^p\times \Delta^q$, and denote by $(\es,\theta)$   the associated Higgs bundle. Then for any holomorphic section $s\in \Gamma(U,\es)$, where $U\subsetneq  \Delta^{p+q}$ is a relatively compact open set containing the origin, one has the following norm estimate 
	\begin{align}\label{eq:norm}
	|s|_{\rm hod}\leqslant C \big((-\log |t_1|)\cdot (-\log |t_2|)\cdots (-\log |t_p|)\big)^\alpha,
	\end{align}
	where $\alpha$ is some positive constant independent of  $s$, and $t=(t_1,\ldots,t_{p+q})$ denotes to be  the coordinates of $\Delta^{p+q}$.
\end{thm}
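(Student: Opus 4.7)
The plan is to reduce to the case of unipotent monodromy by a finite base change, and then invoke Schmid's nilpotent orbit theorem together with the Cattani-Kaplan-Schmid asymptotic norm estimates for polarized variations of Hodge structures. Since the local monodromies $T_i$ around each puncture $t_i=0$ (for $i\le p$) are quasi-unipotent, there is an integer $N$ such that $T_i^N$ is unipotent for every $i$. Let $\pi:\Delta^{p+q}\to \Delta^{p+q}$ be the branched cover $(u_1,\ldots,u_{p+q})\mapsto (u_1^N,\ldots,u_p^N,u_{p+1},\ldots,u_{p+q})$; the pulled-back VHS $\pi^*\mathcal{H}$ has unipotent monodromies, and the pullback of the lower canonical extension ${}^l\fc^\bullet$ differs from Deligne's canonical extension only by a bounded twist, since the residue eigenvalues of ${}^l\fc^\bullet$ were required to lie in $[0,1)$.

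In the unipotent setting, the classical norm estimates of Schmid and Cattani-Kaplan-Schmid state that any holomorphic section $\widetilde{s}$ of Deligne's canonical extension on a relatively compact open $\widetilde{U}\subset \Delta^{p+q}$ containing the origin satisfies $|\widetilde{s}|_{\rm hod}\le C\prod_{i=1}^p(-\log|u_i|)^{\alpha'}$ for constants $C,\alpha'>0$ depending only on $\widetilde{U}$ and the VHS. Applying this to the pullback $\pi^*s$ of the given section $s\in\Gamma(U,\es)$, the $\pi$-equivariance of the Hodge metric together with the elementary identity $-\log|u_i|=N^{-1}(-\log|t_i|)$ translates the estimate back to the original $t$-coordinates and yields the claimed bound \eqref{eq:norm} for an appropriate $\alpha>0$, uniformly in $s$ because one works on a single relatively compact $U$.

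The main technical obstacle is comparing the lower canonical extension with Deligne's canonical extension after the base change $\pi$, and tracking how a holomorphic section of the former produces a section of the latter with only polylogarithmic deterioration in norm. This is handled by choosing multivalued flat frames in which each $T_i$ is placed in Jordan form, writing the transition matrix between the two extensions in terms of the nilpotent logarithms $N_i=\log T_i^{\rm unip}$ and the semisimple parts with eigenvalues in $[0,1)$, and then invoking the $\mathrm{SL}_2^p$-orbit theorem to control the Hodge norm of flat sections by explicit polynomials in the $-\log|u_i|$. Once these asymptotic estimates are in place, the pointwise bound on $|s|_{\rm hod}$ follows by descending from the cover, and any remaining contribution from the smooth directions $u_{p+1},\ldots,u_{p+q}$ is absorbed into the constant $C$ by compactness of $U$.
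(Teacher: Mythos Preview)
Your proposal is correct and follows essentially the same route as the paper: reduce to unipotent monodromy via a ramified cover $\pi$, invoke the Schmid/Cattani--Kaplan--Schmid norm estimates upstairs, and compare the pulled-back lower canonical extension with Deligne's canonical extension. The paper carries out the comparison you flag as the ``main technical obstacle'' by an explicit frame computation showing that $\pi'^*e_j = \prod_i w_i^{m_{ij}}\,\tilde{e}_j$ with all $m_{ij}\geq 0$ (this is exactly where the lower canonical extension condition, i.e.\ residue eigenvalues in $[0,1)$, is used), so in fact there is no polylogarithmic deterioration at all --- the monomial factor is bounded by $1$ on the polydisk --- and the $\mathrm{SL}_2^p$-orbit theorem is only needed implicitly through the cited unipotent estimate \cite[Claim 7.8]{VZ03}.
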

Let us mention that the estimates of  Hodge metric for   \emph{upper canonical extension} were obtained by Peters \cite{Pet84} in one variable, and by Catanese-Kawamata \cite{CK17} in several variables, based on the work   \cite{Sch73,CKS86}. We provide a slightly different proof of \cref{Hodge metric}  for completeness sake, following  closely the  approaches in \cite{Pet84,CK17}. 
\begin{proof}[Proof of \cref{Hodge metric}]
	The fundamental group $\pi_1\big((\Delta^*)^p\times \Delta^q\big)$ is generated by elements
	$\gamma_1,\ldots,\gamma_p$, where $\gamma_j$ may be identified with the counter-clockwise generator of the
	fundamental group of the $j$-th copy of $\Delta^*$
	in $(\Delta^*)^p$. Set $T_j$ to be the 
	monodromy transformation with respect to $\gamma_j$, which  pairwise commute and are known to be
	quasi-unipotent; that is, for any multivalued section $\underline{v}(t_1,\ldots,t_{p+q})$ of $\mathcal{H}$, one has
	$$
	\underline{v}(t_1,\ldots,e^{2\pi i}t_j,\ldots,t_{p+q})=T_j\cdot \underline{v}(t_1,\ldots,t_{p+q})
	$$
	and $[T_j,T_k]=0$ for any $j,k=1,\ldots,p$. Set 
	$
	T_j=D_j\cdot U_j
	$ 
	to be the (unique) Jordan-Chevally decomposition, so that $D_j$ diagonalizable and $U_j$ is unipotent with $[D_j,U_j]=0$.   Since $T_j$ is quasi-unipotent by the theorem of Borel,   all the eigenvalues of $D_j$ are thus the roots of unity. 
	Set  $N_j:=\frac{1}{2\pi i}\sum_{k>0}(I-U_j)^k/k$. If $D_j={\rm diag.}(d_{j\ell})$ then we set $S_j={\rm diag.}(\lambda_{j\ell})$ with $\lambda_{j\ell}\in (-2\pi i,0]$ and $\exp(\lambda_{j\ell})=d_{j\ell}$. Since $[T_j,T_k]=0$,   Jordan-Chevally decomposition implies that
	\begin{align}\label{commutative}
	[S_j,S_k]=[S_j,N_k]=[N_j,N_k]=0.
	\end{align}
	Fix a point $t_0\in (\Delta^*)^p\times \Delta^q$, and take a basis $v_1,\ldots,v_r\in V_{t_0}$ so that $S_1,\ldots,S_p$ are simultaneously diagonal, that is, one has
	\begin{align}\label{eigenvalue}
	S_j(v_\ell)=\lambda_{j\ell}.
	\end{align}
	Let us define $\underline{v}_1(t),\ldots,\underline{v}_r(t)$ to be the induced multivalued flat sections. Then 
	$$
	e_j(t):=\exp\big(-\frac{1}{2\pi i}\sum_{i=1}^{p}(S_i+N_i)\cdot \log t_i  \big)\underline{v}_j(t)
	$$
	is single-valued and  forms a basis of holomorphic sections for the lower canonical extension $^l \mathcal{H}$.
	
	Recall that $d_{j\ell}$ are all roots of unity. One thus can take  a positive integer $m$ so that $m_{j\ell}:=-{m\lambda_{j\ell}}/{2\pi i}$ are all \emph{non-negative integers}. Equivalently, each $T_j^m$ is unipotent. Define a ramified cover 
	\begin{align*}
	\pi:   \Delta^{p+q}&\to  \Delta^{p+q}\\
	(w_1,\ldots,w_{p+q}) &\mapsto   (w_1^m,\ldots,w^m_{p},w_{p+1},\ldots,w_{p+q})
	\end{align*}
	and set $\pi'$ to be the restriction of $\pi$ to $(\Delta^*)^p\times \Delta^q$. Then $\pi'^*\fc^{\bullet}$ is a variation of Hodge structure on $(\Delta^*)^p\times \Delta^q$ with unipotent monodromy, and we define $^c{\pi'^*\mathcal{H}}$ the canonical extension of $\pi'^*\mathcal{H}$. Set $\underline{u}_j(w)=\pi'^*\underline{v}_j$ which are multivalued sections for the local system $\pi'^*\mathcal{H}$. Then
	$$
	\underline{u}_j(w_1,\ldots,e^{2\pi i}w_j,\ldots,w_{p+q})=T_j^m\cdot \underline{u}_j(w_1,\ldots, w_{p+q}).
	$$
	Define 
	\begin{align}\label{basis}
	\tilde{e}_j(w):=\exp\big(-\frac{1}{2\pi i}\sum_{i=1}^{p}mN_i\cdot \log w_i  \big)\underline{u}_j(w)
	\end{align}
	which forms  a basis  of   $^c\pi'^*\mathcal{H}$. Based on the work of \cite{Sch73,CKS86}, it was shown in \cite[Claim 7.8]{VZ03} that one has the upper bound  of norms
	\begin{align}\label{norm}
	|\tilde{e}_j(w)|_{\rm hod}\leqslant C_0 \big((-\log |w_1|)\cdot (-\log |w_2|)\cdots (-\log |w_p|)\big)^\alpha
	\end{align}
	for some positive constants $C_0$ and $\alpha$.
	One the other hand, we have
	\begin{align*}
	\pi'^*e_j(w)&=\exp\big(-\frac{1}{2\pi i}\sum_{i=1}^{p}(S_i+N_i)\cdot \log w^m_i  \big)\pi'^*\underline{v}_j (w) \\
	&\stackrel{\eqref{commutative}}{=}\exp\big(-\frac{1}{2\pi i}\sum_{i=1}^{p}mN_i\cdot \log w_i  \big)\cdot \exp\big(-\frac{1}{2\pi i}\sum_{i=1}^{p}mS_i\log w_i \big) \pi'^*\underline{v}_j (w)\\
	&\stackrel{\eqref{eigenvalue}}{=}\exp\big(-\frac{1}{2\pi i}\sum_{i=1}^{p}mN_i\cdot \log w_i  \big)\cdot \exp\big(-\frac{1}{2\pi i}\sum_{i=1}^{p}m\lambda_{ij}\log w_i \big) \pi'^*\underline{v}_j (w)\\
	&=\prod_{i=1}^{p}w_i^{m_{ij}}\cdot \exp\big(-\frac{1}{2\pi i}\sum_{i=1}^{p}mN_i\cdot \log w_i  \big)\cdot  \underline{u}_j (w)\\
	&\stackrel{\eqref{basis}}{=}\prod_{i=1}^{p}w_i^{m_{ij}}\cdot \tilde{e}_j(w).
	\end{align*}
	By the definition of lower canonical extension, $m_{ij}$ are all non-negative integers, and thus  
	\begin{align*}
	\pi'^*|e_j|_{\rm hod}(w)&=|\pi'^*e_j(w)|_{\rm hod}= \prod_{i=1}^{p}|w_i|^{m_{ij}}|\tilde{e}_j(w)|_{\rm hod}\\
	&\stackrel{\eqref{norm}}{\leqslant}  C_0 \big((-\log |w_1|)\cdot (-\log |w_2|)\cdots (-\log |w_p|)\big)^\alpha.
	\end{align*}
	Hence
	$$
	|e_j|_{\rm hod}(t)\leqslant   \frac{C_0}{m^p} \big((-\log |t_1|)\cdot (-\log |t_2|)\cdots (-\log |t_p|)\big)^\alpha.
	$$
	Note that $^l\mathcal{H}\stackrel{\mathscr{C}^\infty}{\simeq}\es$. Therefore, for any holomorphic section $s\in \Gamma(U,\es)$, there exist smooth functions $f_1,\ldots,f_r\in \oc(U)$ so that $s=\sum_{j=1}^{r}f_je_j$. This shows the estimate \eqref{eq:norm}.
\end{proof}
\begin{rem}
	For the Hodge metric of upper canonical extension, one makes the choice that $\lambda_{j\ell}\in [0,2\pi i)$ instead of $\lambda_{j\ell}\in (-2\pi i,0]$ in the proof of \cref{Hodge metric}. Then the same computation as above can easily show that
	$$
	|e_j|_{\rm hod}(t)\leqslant \prod_{i=1}^{p}|t_i|^{-\frac{\lambda_{ij}}{2\pi i}} \frac{C}{m^p} \big((-\log |t_1|)\cdot (-\log |t_2|)\cdots (-\log |t_p|)\big)^\alpha,
	$$
	which were obtained in \cite{CK17}.
\end{rem}

\subsection{A generic local Torelli for VZ Higgs bundle}\label{constructionvz}
In this section we  prove that the generic  local  Torelli  holds for any VZ Higgs bundle, which is a crucial step in the proofs of \cref{Deng,VZ question}.

\begin{thmx}[Generic  local  Torelli]\label{mainvz}
	For the abstract Viehweg-Zuo Higgs bundles defined in \cref{def:VZ}, the morphism $\tau_1:\ts_Y(-\log D)\to \ls^{-1}\otimes E^{n-1,1}$ defined in \eqref{iterated Kodaira2} is generically injective. 
\end{thmx}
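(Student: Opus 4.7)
My plan is to prove \cref{mainvz} by contradiction. Suppose $\tau_{1}\colon \ts_{Y}(-\log D)\to \ls^{-1}\otimes E^{n-1,1}$ fails to be generically injective; set $\ks := \ker(\tau_{1})$, a coherent subsheaf of $\ts_{Y}(-\log D)$ of generic rank $r\geq 1$.

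\emph{Dualising the picture.} The effective inclusion $\oc_{Y}\hookrightarrow \fs_{0}\subset\ls^{-1}\otimes E^{n,0}$ dualises to a non-zero sheaf morphism $\tilde\sigma\colon \ls\to E^{n,0}$, and by the construction of $\tau_1$, $\tau_{1}(v) = \theta_{n,0,v}\circ \tilde\sigma$. Let $\ls'\subseteq E^{n,0}$ be the saturation of the image of $\tilde\sigma$: a reflexive rank-$1$ subsheaf satisfying $\deg(\ls'\cdot C)\geq \deg(\ls\cdot C)$ for every irreducible curve $C\subset Y$, since saturation only augments intersection numbers on curves.

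\emph{Core curvature computation.} Equip $\ls'$ with the Hodge metric $h_{\mathrm{hod}}$ induced from $E^{n,0}$. Griffiths' formula for the top piece of a polarised VHS gives the semi-positivity identity
\[
\bigl\langle\sqrt{-1}\Theta_{h_{\mathrm{hod}}}(E^{n,0})(v,\bar v)s,\,s\bigr\rangle_{\mathrm{hod}} \;=\; |\theta_{v}(s)|^{2}_{\mathrm{hod}}.
\]
Combined with the second-fundamental-form identity for the induced metric on a rank-$1$ subsheaf, this yields, on $V_{0} := Y\setminus(D\cup S)$,
\[
\sqrt{-1}\Theta_{h_{\mathrm{hod}}}(\ls')(v,\bar v) \;=\; \frac{|\theta_{v}(\tilde\sigma)|^{2}_{\mathrm{hod}} - |A(v)\tilde\sigma|^{2}_{\mathrm{hod}}}{|\tilde\sigma|^{2}_{\mathrm{hod}}},
\]
where $A$ is the second fundamental form of $\ls'\subset E^{n,0}$. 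For $v\in\ks$ one has $\theta_{v}(\tilde\sigma) = 0$, hence $\sqrt{-1}\Theta_{h_{\mathrm{hod}}}(\ls')(v,\bar v)\leq 0$.

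\emph{Reduction to a curve.} Next I pick a very general smooth irreducible complete intersection curve $C\subset Y$ whose tangent directions lie in $\ks$ at every point of $C\cap V_{0}$. Such $C$ is obtained inside a leaf of the foliation generated by $\ks$, placed in general position with respect to $D+S$ and to the codimension-$\geq 2$ vanishing locus of $\tilde\sigma$. Along $C$, the preceding inequality becomes $\sqrt{-1}\Theta_{h_{\mathrm{hod}}}(\ls'|_{C})\leq 0$ pointwise on $C\cap V_{0}$.

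\emph{Integration against the bigness of $\ls$.} The Hodge metric $h_{\mathrm{hod}}$ is singular along $D\cup S$, but by the quasi-unipotent norm estimate of \cref{Hodge metric}, the local potential $\phi = -\log|\tilde\sigma|^{2}_{\mathrm{hod}}$ near each puncture $p\in C\cap(D\cup S)$ satisfies the iterated-logarithm lower bound $\phi(t)\geq -C_{1}\log(-\log|t|)-C_{2}$. Such growth forces every Lelong number of $\sqrt{-1}\Theta_{h_{\mathrm{hod}}}(\ls'|_{C})$ at a boundary point to vanish, so the curvature current extends across $C\cap(D\cup S)$ without atomic mass, and
\[
\deg(\ls'|_{C}) \;=\; \int_{C}\sqrt{-1}\Theta_{h_{\mathrm{hod}}}(\ls'|_{C}) \;\leq\; 0.
\]
Since $\ls$ is big and nef with $\mathbf{B}_{+}(\ls)\subset D\cup S$, taking $C$ to be a complete intersection of sufficiently many very ample divisors in general position gives $\deg(\ls|_{C})>0$. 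Combined with $\deg(\ls'|_{C})\geq\deg(\ls|_{C})$, this contradicts the preceding inequality. Hence $\tau_{1}$ must be generically injective.

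\emph{Main obstacle.} The delicate point is arranging that the chosen curve $C$ is everywhere tangent to $\ks$. This amounts to showing that $\ks$ generates an algebraic foliation on $V_{0}$: the commutativity $\eta\wedge\eta = 0$ together with the fact that all iterated Higgs contractions $\tau_{k}$ vanish on $\ks\otimes\ts_{Y}(-\log D)^{\otimes(k-1)}$ (and hence each $\tau_{k}$ factors through $\Sym^{k}(\ts_{Y}(-\log D)/\ks)$) imposes enough algebraic-differential constraints to deduce the required integrability and thus to produce the needed leaves.
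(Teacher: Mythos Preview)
Your approach has a genuine gap at precisely the point you flag as the ``main obstacle'': producing a compact algebraic curve $C\subset Y$ everywhere tangent to $\ks=\ker(\tau_1)$. The justification you sketch does not work. First, the condition $\eta\wedge\eta=0$ is an $\oc_Y$-linear commutativity statement for the Higgs field; it says nothing about the Lie bracket of sections of $\ts_Y(-\log D)$. Since $\eta$ is $\oc_Y$-linear, $\eta([\xi_1,\xi_2])$ bears no a priori relation to $\eta(\xi_1)$ and $\eta(\xi_2)$, so Frobenius integrability of $\ks$ does not follow. The observation that each $\tau_k$ kills $\ks\otimes\ts_Y(-\log D)^{\otimes(k-1)}$ is correct but is again an $\oc_Y$-linear fact that imposes no differential constraint. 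Second, even if $\ks$ were integrable, leaves of algebraic foliations are generically transcendental; obtaining a compact algebraic leaf (or a compact curve inside a leaf) is a deep problem and cannot be assumed. Finally, your sentence ``taking $C$ to be a complete intersection of sufficiently many very ample divisors in general position'' is incompatible with the tangency requirement: a general complete intersection curve is nowhere tangent to a proper subsheaf of $\ts_Y$.

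The paper sidesteps this entirely with a maximum-principle argument. One takes the global section $s\in H^0(Y,\ls^{-1}\otimes E^{n,0})$ and equips $\ls^{-1}\otimes E^{n,0}$ with the metric $h_g^\alpha=g_\alpha^{-1}\otimes h$ of \cref{singular metric}, which is continuous on $Y$ and vanishes along $D+S$. By compactness of $Y$, the smooth function $|s|^2_{h_g^\alpha}$ attains its maximum at an interior point $y_0\in V_0$. At $y_0$ the Hessian is semi-negative, while $\sqrt{-1}\Theta_{\ls,g_\alpha}$ is strictly positive; plugging into the Griffiths curvature identity \eqref{eq:Hodge bundle2} forces $\sqrt{-1}\{\tilde\theta_{n,0}(s),\tilde\theta_{n,0}(s)\}_{h_g^\alpha}$ to be strictly positive at $y_0$, i.e.\ $\tau_1$ is injective at that single point. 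No curve, no foliation, no integration is needed; the twist by the positively curved metric $g_\alpha$ on $\ls$ is exactly what converts the semi-negativity you observed into a strict inequality at the maximum.
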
 

\begin{proof}[Proof of \cref{mainvz}]
	By \cref{def:VZ}, the non-zero morphism $\oc_Y\to \fs_0\to \ls^{-1}\otimes E^{n,0}$   induces a global section  $s\in H^0(Y, \ls^{-1}\otimes E^{n,0})$, 
	which is \emph{generically} non-vanishing over $V_0:=Y\setminus D\cup S$. Set 
	\begin{align}\label{set}
	V_1:=\{y\in V_0 \mid s(y)\neq 0 \}
	\end{align}
	which is a non-empty Zariski open set of $V_0$.  For the first stage of VZ Higgs bundle $\ls^{-1}\otimes E^{n,0}$, we equip it with a singular metric $h_g^\alpha:=g_\alpha^{-1}\otimes h$ as in \cref{singular metric}, so that \cref{estimate,bounded} are satisfied.  Note that $h_g^\alpha$ is smooth over $V_0$. Let us denote  $D'$ to be the $(1,0)$-part of its Chern connection over $V_0$, and $\Theta_0$ to be its curvature form. Then by the Griffiths curvature formula of Hodge bundles (see   \cite{GT84}), 	  over $V_0$ we   have
	\begin{align}\nonumber
	\Theta_0 &=- \Theta_{\ls,g_{\alpha}}\otimes \vvmathbb{1}+\vvmathbb{1}\otimes \Theta_{h}(E^{n,0})\\\nonumber
	&=- \Theta_{\ls,g_{\alpha}}\otimes \vvmathbb{1}-\vvmathbb{1}\otimes   ({\theta}_{n,0}^*\wedge  {\theta}_{n,0})\\\label{eq:Hodge bundle2}
	&=- \Theta_{\ls,g_{\alpha}}\otimes \vvmathbb{1}- \tilde{\theta}_{n,0}^*\wedge \tilde{\theta}_{n,0},
	\end{align}
	where we set \(\tilde{\theta}_{n-k,k}:=\vvmathbb{1}\otimes \theta_{n-k,k} : \ls^{-1}\otimes E^{n-k,k}\rightarrow \ls^{-1}\otimes  E^{n-k-1,k+1}\otimes \Omega_{Y}\big(\log (D+S)\big) \), and define $\tilde{\theta}_{n,0}^*$ to be the adjoint of $\tilde{\theta}_{n,0}$ with respect to the metric $h_g^\alpha$. Hence over $V_1$ one has
	\begin{align} \nonumber
	-\sqrt{-1}\d \dbar\log |s|_{h_g^\alpha}^2&=  \frac{\big\{ \sqrt{-1}\Theta_0(s),s\big\}_{h_g^\alpha}}{| s|^2_{h_g^\alpha}}+\frac{\sqrt{-1}\{D's,s \}_{h_g^\alpha}\wedge \{s,D's \}_{h_g^\alpha}}{|s|_{h_g^\alpha}^4}-\frac{\sqrt{-1}\{D's,D's \}_{h_g^\alpha}}{|s|_{h_g^\alpha}^2} \\\label{eq:crucial}
	& \leqslant   \frac{\big\{ \sqrt{-1}\Theta_0(s),s\big\}_{h_g^\alpha}}{| s|^2_{h_g^\alpha}}
	\end{align}	
	thanks to the Lagrange's inequality 
	$$\sqrt{-1}|s|^2_{h_g^\alpha}\cdot \{D's,D's \}_{h_g^\alpha}\geqslant \sqrt{-1}\{D's,s\}_{h_g^\alpha}\wedge \{s,D's \}_{h_g^\alpha}.$$
	Putting \eqref{eq:Hodge bundle2} to \eqref{eq:crucial}, over $V_1$ one has 
	\begin{align} \label{eq:final}
	\sqrt{-1}\Theta_{\ls,g_\alpha}-\sqrt{-1}\d \dbar\log |s|_{h_g^\alpha}^2 \leqslant  -\frac{\big\{ \sqrt{-1}\tilde{\theta}_{n,0}^*\wedge \tilde{\theta}_{n,0}(s),s\big\}_{h_g^\alpha}}{| s|^2_{h_g^\alpha}}= \frac{\sqrt{-1}\big\{ \tilde{\theta}_{n,0}(s),\tilde{\theta}_{n,0}(s)\big\}_{h_g^\alpha}}{| s|^2_{h_g^\alpha}} 
	\end{align}
	where $\tilde{\theta}_{n,0}(s)\in H^0\Big(Y,\ls^{-1}\otimes  E^{n-1,1}\otimes \Omega_{Y}\big(\log (D+S)\big)\Big)$. 
	By \cref{bounded}, for any $y\in D\cup S$, one has
	$$\lim\limits_{y'\in V_0,y'\to y}|s|^2_{h_g^\alpha}(y')=0.$$
	Therefore, it follows from the compactness of $Y$  that there exists   $y_0\in V_0$ so that $|s|^2_{h_g^\alpha}(y_0)\geqslant |s|^2_{h_g^\alpha}(y)$ for any $y \in V_0$. Hence $|s|^2_{h_g^\alpha}(y_0)>0$, and by \eqref{set}, $y_0\in V_1$. Since $|s|^2_{h_g^\alpha}$ is smooth over $V_0$,  
	$
	\sqrt{-1}\d \dbar\log |s|_{h_g^\alpha}^2(y_0)
	$ 
	is semi-negative. By \cref{estimate}, $	\sqrt{-1}\Theta_{\ls,g_\alpha}$ is strictly positive at $y_0$. By \eqref{eq:final} and $|s|_h^2(y_0)>0$, we conclude  that $\sqrt{-1}\big\{  \tilde{\theta}_{n,0}(s),\tilde{\theta}_{n,0}(s)\big\}_{h_g^\alpha}$ is strictly positive at $y_0$. In particular, for any non-zero $\xi \in \ts_{Y,y_0}$, $\tilde{\theta}_{n,0}(s)(\xi)\neq 0$. For
	$$
	\tau_1:\ts_Y (-\log D  )\to \ls^{-1}\otimes E^{n-1,1}
	$$
	in \eqref{iterated Kodaira2}, over $V_0$ it is defined by $\tau_1(\xi):=\tilde{\theta}_{n,0}(s)(\xi)$, which 
	is thus \emph{injective at $y_0\in V_1$}. Hence $\tau_1$ is \emph{generically injective}. The theorem is thus proved.
\end{proof}
\begin{rem}
	Viehweg-Zuo \cite{VZ02} showed that $\tau_1:\ts_Y(-\log D)\to \ls^{-1}\otimes E^{n-1,1}$ defined in \eqref{iterated Kodaira2}  does not vanishing on $V$
	using a global argument relying on the Griffiths curvature computation for Hodge metric and the bigness of
	direct image sheaves due to Kawamata and Viehweg. Moreover, by the work of Viehweg-Zuo \cite{VZ03} and Popa-Taji-Wu \cite{PTW18}, it has already been known to us that, when fibers in \cref{VZ question} have big and semi-ample canonical bundle,  the VZ Higgs bundles constructed in \cref{thm:existence} over the base always satisfy \cref{mainvz}.
\end{rem}
	Though \cref{VZ question} follows from our more general result in \cref{Deng}, we are able to prove \cref{VZ question} by directly applying the  results  by Viehweg-Zuo \cite{VZ03} and Popa-Taji-Wu \cite{PTW18}. Since we need some efforts to prove \cref{Deng}, let us quickly show how to combine their   work with \cref{mainvz} to prove \cref{VZ question}. 
\begin{proof}[Proof of \cref{VZ question}]
By the stratified arguments of Viehweg-Zuo \cite{VZ03}, it suffices to prove that there cannot exists a Zariski dense entire curve. Assume by contradiction that there exists such $\gamma:\cb\to V$. The existence of VZ Higgs bundle on some birational model $\tilde{V}$ of $V$ is known to us by  \cref{thm:existence}.  Let $\tilde{\gamma}:\cb\to \tilde{V}$ is the lift of $\gamma$ which is also Zariski dense.  In \cite{VZ03,PTW18}, the authors proved that the restriction of $\tau_1$ defined in    \eqref{iterated Kodaira2} on $\cb$, say $\tau_1|_{\cb}:\ts_{\cb}\to  \tilde{\gamma}^*(\ls^{-1}\otimes E^{n-1,1})$, has to vanish identically, or else,  they can construct  a pseudo hermitian metric on $\cb$ with strictly negative Gaussian curvature, which violates the Ahlfors-Schwarz lemma. By \cref{mainvz}, this cannot happen since $\tilde{\gamma}:\cb\to \tilde{V}$ is Zariski dense. The theorem is proved.
\end{proof} 
  \section{Pseudo Kobayashi hyperbolicity of the base}
 In this section we first establish an algorithm to construct Finsler metrics whose holomorphic sectional curvatures are bounded above by a negative constant via VZ Higgs bundles. By our construction and  generic local Torelli \cref{mainvz},   those Finsler metrics are positively definite over a Zariski open set, and by the Ahlfors-Schwarz lemma, we prove that a quasi-projective manifold is pseudo Kobayashi hyperbolic once it is equipped with a VZ Higgs bundle, and thus prove \cref{Deng}.

\subsection{Finsler metric and (pseudo) Kobayashi hyperbolicity}\label{sec:Finsler}
Throughout this subsection  \(X\) will denote to be  a complex manifold of dimension $n$.

\begin{dfn}[Finsler metric]\label{def:Finsler}
	Let \(\es\) be a  holomorphic vector bundle on $X$. A \emph{Finsler metric}\footnote{This definition is  a bit different from the definition in \cite{Kob98}, which requires \emph{convexity} or \emph{triangle inequality}, and the Finsler metric there  can be upper-semi continuous.}   on \(\es\) is a real non-negative  continuous  function \(F:\es\to  \mathclose[ 0,+\infty\mathopen[ \) such
	that 
	\[F(av) = |a|F(v)\]
	for any \(a\in\cb  \) and \(v\in  \es\).  The Finsler metric\(F\) is \emph{positively definite} at some subset \(S\subset X \) if for any \(x\in S\) and any non-zero vector \(v\in \es_x \),  \(F(v)> 0 \).   
\end{dfn}

When \(F\) is  a  Finsler metric on \(\ts_X\), we also say that $F$ is  a  \emph{Finsler metric on  $X$}.

Let \(\es\) and \(\gs\) be two locally free sheaves on \(X\), and suppose that there is a morphism 
\[
\varphi: {\rm Sym}^m\es \to\gs
\]
Then for any Finsler metric \(F\) on  \(\gs\), \(\varphi \) induces a pseudo metric \((\varphi^*F)^{\frac{1}{m}} \) on \(\es\) defined by
\begin{eqnarray}\label{induced metric}
(\varphi^*F)^{\frac{1}{m}}(e):=F\big(\varphi(e^{\otimes m})\big)^{\frac{1}{m}}
\end{eqnarray}
for any \(e\in \es \). It is easy to verify that \( (\varphi^*F)^{\frac{1}{m}}\) is also a Finsler metric on $\es$. Moreover, if over some open set  \(U\), \(\varphi \) is an injection as a morphism between vector bundles, and \(F \) is positively definite over \(U\), then \((\varphi^*F)^{\frac{1}{m}} \) is also positively definite over \(U\).
\begin{dfn}\label{def:hyperbolicity}
	\begin{thmlist}
		\item
		The \emph{Kobayashi-Royden  infinitesimal  pseudo-metric} of $X$ 
		is a length function $\kappa_X:\ts_X\to [0,+\infty[$,   defined by
		\begin{eqnarray}\label{KR}
		\kappa_{X}(\xi)=\inf_{\gamma} \big\{ \lambda>0 \mid \exists \gamma:\db\rightarrow X,  \gamma(0)=x, \lambda\cdot \gamma'(0)= \xi  \big\}
		\end{eqnarray}
		for any \(x\in X\) and \(\xi\in \ts_X \),	where \(\db \) denotes the unit disk in \(\cb \). 
		\item \label{def:distance} The \emph{Kobayashi pseudo distance} of $X$, denoted by $d_X:X\times X\to [0,+\infty[$, is 
		$$
		d_X(p,q)=\inf_{\ell}  \int_{0}^{1}\kappa_X\big(\ell'(\tau)\big)d\tau
		$$
		for every pair of points $p,q\in X$, where the infimum is taken over all differentiable curves $\ell:[0,1]\to X$ joining $p$
		to $q$. 
		\item \label{def:pseudo}Let $\Delta\subsetneq X$ be a closed subset. A complex manifold $X$ is \emph{Kobayashi hyperbolic modulo $\Delta$} if $d_X(p,q)>0$ for every pair of distinct points $p,q\in X$ not both contained in $\Delta$. When $\Delta$ is an empty set, the  manifold $X$ is   \emph{Kobayashi
			hyperbolic}; when $\Delta$ is proper and Zariski closed, the   manifold $X$ is \emph{pseudo Kobayashi
			hyperbolic}.
	\end{thmlist}
\end{dfn}
By definition it is easy to show that if \(X\) is   Kobayashi hyperbolic (resp. pseudo Kobayashi hyperbolic), then \(X\) is Brody hyperbolic (resp. algebraically degenerate). Brody's theorem says that when $X$ is compact,  $X$  is Kobayashi hyperbolic if  it is Brody hyperbolic.   However unlike the
case of Kobayashi hyperbolicity, no criteria is known for pseudo Kobayashi hyperbolicity of a compact complex space in terms of entire curves. Moreover, there are many examples of complex (quasi-projective) manifolds  which are Brody hyperbolic but not Kobayashi hyperbolic.

For any holomorphic map \(\gamma:\db\rightarrow X \), the Finsler metric \(F\) induces a continuous Hermitian pseudo-metric on \(\db \)
\[
\gamma^*F^2= \sqrt{-1}\lambda(t)  dt\wedge d\bar{t},
\]
where \(\lambda(t)\) is a non-negative continuous function on \(\db \). The \emph{Gaussian curvature} \(K_{\gamma^*F^2}\) of the pseudo-metric \(\gamma^*F^2\) is defined to be 
\begin{eqnarray}\label{Gauss}
K_{\gamma^*F^2}:=-\frac{1}{\lambda}\frac{\d^2\log \lambda}{\d t {\d}\bar{t}}.
\end{eqnarray}
\begin{dfn}\label{HSC}	
	Let \(X\) be a complex manifold endowed with  a Finsler metric \(F\). 
	\begin{thmlist}
		\item \label{def:sectional}For any \(x\in X \), and \(v\in  \ts_{X,x} \), let \([v]\) denote the complex line spanned by \(v\). We define the holomorphic sectional curvature \(K_{F,[v]}\) in the direction of \([v] \) by 
		\[
		K_{F,[v]}:= \sup K_{\gamma^*F^2}(0)
		\]
		where the supremum is taken over all \(\gamma:\db\rightarrow X \) such that \(\gamma(0)=x\) and \([v]\)
		is tangent to \(\gamma'(0)\).
		\item \label{negatively curved}	We say that   $F$  is \emph{negatively curved} if  there is a
		positive constant \(c\) such that \(K_{F,[v]}\leqslant -c\) for all \(v\in \ts_{X,x} \) for which \(F(v)>0\).
		\item \label{degenerate}	A point \(x\in X \) is   a \emph{degeneracy point} of \(F\) if \(F(v)=0\) for some
		nonzero \(v\in \ts_{X,x} \), and the set of such points  is denoted by  \(\Delta_{F} \).
	\end{thmlist}
\end{dfn}
As mentioned in \cref{introduction},  our negatively curved  Finsler metrics are only constructed on birational models of the base spaces in \cref{Deng,main}, we thus have to establish     bimeromorphic criteria for (pseudo) Kobayashi hyperbolicity  to prove the main theorems.
\begin{lem}[Bimeromorphic criteria for pseudo Kobayashi hyperbolicity]  \label{pseudo Kobayashi}
	Let $\mu:{X}\to Y$   be a bimeromorphic morphism between complex manifolds.   If there exists a Finsler metric \(F\) on $X$ which is negatively curved in the sense of \cref{negatively curved}, then   \(X\) is Kobayashi hyperbolic modulo  \(\Delta_F\), and $Y$ is Kobayashi hyperbolic modulo $\mu\big({\rm Ex}(\mu)\cup \Delta_F\big)$, where ${\rm Ex}(\mu)$ is the exceptional locus of $\mu$. In particular, when $\Delta_F$ is a proper analytic subvariety of $X$, both $X$ and $Y$ are pseudo Kobayashi hyperbolic. 
\end{lem}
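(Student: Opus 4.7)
The plan is to use Demailly's version of the Ahlfors--Schwarz lemma \cite[Lemma~3.2]{Dem97} (already cited in the paper) to convert the negative curvature bound on $F$ into an infinitesimal comparison between $F$ and the Kobayashi--Royden pseudo-metric $\kappa_X$, and then to transfer this comparison from $X$ to $Y$ by lifting small discs across $\mu$ away from its exceptional locus.

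\textbf{Step 1 (from negative curvature to $\kappa_X \gtrsim F$).} Let $c>0$ denote the upper bound for the holomorphic sectional curvature of $F$ provided by \cref{negatively curved}. For any holomorphic $\gamma\colon \mathbb{D}\to X$, the pull-back $\gamma^{*}F^{2}$ is a continuous pseudo-hermitian metric on $\mathbb{D}$ whose Gaussian curvature is bounded above by $-c$ wherever it is positive. Demailly's Ahlfors--Schwarz lemma yields
\[
\gamma^{*}F^{2} \;\leq\; \tfrac{2}{c}\, \omega_{P},
\]
with $\omega_{P}$ the Poincaré metric on $\mathbb{D}$. Recalling \eqref{KR}, this translates into the pointwise inequality $F(\xi) \leq C\,\kappa_{X}(\xi)$ for all $\xi\in \ts_{X}$, where $C=\sqrt{2/c}$.

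\textbf{Step 2 (hyperbolicity of $X$ modulo $\Delta_F$).} Let $p\neq q$ be distinct points of $X$ not both in $\Delta_F$; say $p\notin \Delta_F$. Fix any background smooth hermitian metric $h$ on $\ts_X$ and choose a relatively compact coordinate neighborhood $U\subset X\setminus \Delta_F$ of $p$ with $q\notin \overline{U}$. By \cref{degenerate} the function $F$ is continuous and strictly positive on the unit $h$-sphere bundle of $\overline{U}$, hence there is a constant $c_{0}>0$ with $F\geq c_{0}\|\cdot\|_{h}$ on $\ts_X|_{\overline{U}}$. Combining with Step~1 gives $\kappa_{X}\geq (c_{0}/C)\|\cdot\|_{h}$ on $\ts_X|_{\overline{U}}$. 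Integrating along an arbitrary path $\ell\colon[0,1]\to X$ joining $p$ to $q$ according to \cref{def:distance}, the path must either stay in $\overline{U}$ (in which case its $\kappa_{X}$-length is $\geq (c_{0}/C)$ times its $h$-length, which is $\geq (c_{0}/C)\,d_{h}(p,q)$) or cross $\partial U$ (in which case its $\kappa_{X}$-length is $\geq (c_{0}/C)\,d_{h}(p,\partial U)>0$). Taking the infimum yields $d_{X}(p,q)>0$, proving $X$ is Kobayashi hyperbolic modulo $\Delta_{F}$.

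\textbf{Step 3 (transfer to $Y$).} Let $p\neq q$ in $Y$ with, say, $p\notin \mu\big(\mathrm{Ex}(\mu)\cup \Delta_{F}\big)$. Since $\mu$ is bimeromorphic between complex manifolds and proper, there is a unique point $\tilde p\in X\setminus \big(\mathrm{Ex}(\mu)\cup \Delta_{F}\big)$ with $\mu(\tilde p)=p$, and $\mu$ restricts to a biholomorphism from some open neighborhood $W\ni \tilde p$ onto a neighborhood $V=\mu(W)\ni p$ in $Y$, which we may shrink so that $q\notin \overline{V}$. Pushing forward $F|_{W}$ along $\mu|_{W}$ yields a continuous, positively definite Finsler metric $F_{V}$ on $\ts_{Y}|_{V}$; the comparison $F_{V}\geq c_{0}\|\cdot\|_{h'}$ holds for a background hermitian metric $h'$ on $V$ exactly as in Step~2. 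The same Ahlfors--Schwarz argument applies to any disc $\gamma\colon \mathbb{D}\to Y$ with $\gamma(0)\in V$: near $0$ the disc lifts to $\tilde\gamma := (\mu|_{W})^{-1}\circ \gamma$ into $W$, and Step~1 applied to $\tilde\gamma$ gives $F_{V}(\gamma'(0))=F(\tilde\gamma'(0))\leq C$ (after normalizing the Poincaré disc). Hence $\kappa_{Y}\geq (c_{0}/C)\|\cdot\|_{h'}$ on $\ts_{Y}|_{V}$, and the same length-integration argument as in Step~2 gives $d_{Y}(p,q)>0$. The final assertion follows because if $\Delta_{F}$ is a proper analytic subvariety of $X$, then $\mu\big(\mathrm{Ex}(\mu)\cup \Delta_{F}\big)$ is a proper analytic subvariety of $Y$ by properness of $\mu$ and Remmert's theorem.

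\textbf{Main obstacle.} The delicate point is the transfer to $Y$: we cannot push $F$ to a global Finsler metric on $Y$ because $\mu^{-1}$ is only meromorphic. The fix is that we only need the lower bound $\kappa_{Y}\gtrsim \|\cdot\|_{h'}$ on a small \emph{local} neighborhood of each non-exceptional, non-degeneracy point, which is enough to guarantee that $d_{Y}(p,\cdot)$ does not collapse at such a point; the global curvature estimate on $X$ does the rest via disc-lifting.
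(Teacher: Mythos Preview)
Your Steps~1 and~2 are fine and essentially coincide with the paper's argument (the paper phrases Step~2 as $d_X(p,q)\geq \delta_F(p,q)>0$ where $\delta_F$ is the length distance of $F$, but the reason $\delta_F(p,q)>0$ is exactly the local positive-definiteness argument you spell out).

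Step~3, however, has a genuine gap. You lift the disc $\gamma:\mathbb{D}\to Y$ only \emph{near $0$} to $\tilde\gamma=(\mu|_W)^{-1}\circ\gamma$, and then invoke ``Step~1 applied to $\tilde\gamma$'' to conclude $F(\tilde\gamma'(0))\leq C$. But Step~1 is the Ahlfors--Schwarz comparison on the \emph{whole} unit disc; a lift defined only on a small sub-disc $D_r\subset\mathbb{D}$ gives at best $\kappa_X(\tilde\gamma'(0))\leq 1/r$, with $r$ depending on how fast $\gamma$ leaves $V$. No uniform bound on $F_V(\gamma'(0))$ follows, so you cannot deduce $\kappa_Y\geq (c_0/C)\|\cdot\|_{h'}$ on $V$. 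Equivalently: extending $F_V$ by zero outside $V$ and running Ahlfors--Schwarz on $Y$ does not work either, since the extension is neither continuous nor known to satisfy the curvature inequality across $\partial V$.

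The missing idea, which is the crux of the paper's proof, is that the lift is in fact \emph{global}. Since $\mu^{-1}:Y\dashrightarrow X$ is meromorphic, so is $\mu^{-1}\circ\gamma:\mathbb{D}\dashrightarrow X$; but the indeterminacy locus of a meromorphic map has codimension $\geq 2$, hence is empty on the one-dimensional domain $\mathbb{D}$. Thus every disc $\gamma\in\mathrm{Hol}(Y,y)$ with $y\notin\mu(\mathrm{Ex}(\mu))$ lifts to a full disc $\tilde\gamma\in\mathrm{Hol}(X,x)$, giving the bijection $\mathrm{Hol}(X,x)\simeq\mathrm{Hol}(Y,y)$ and hence the equality $\kappa_X(\xi)=\kappa_Y(\mu_*\xi)$ for $\xi\in\ts_{X,x}$. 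From this and Step~1 one gets $\mu^*\kappa_Y|_{\mu^{-1}(U)}=\kappa_X|_{\mu^{-1}(U)}\geq F/C$, and your Step~2 argument applied to the push-forward of $F$ finishes the proof. Once you insert this global-lifting observation, your argument becomes correct and matches the paper's.
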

\begin{proof}
	The first statement  is a slight variant of   \cite[Theorem 3.7.4]{Kob98}. By normalizing $F$ we may assume that $K_F\leqslant -1$.  By the Ahlfors-Schwarz lemma, one has 
	$
	F\leqslant \kappa_X.
	$ 
	Let $\delta_F:X\times X\to [0,+\infty[$ be the distance
	function on $X$ defined by $F$ in a similar way as $d_X$:
	$$
	\delta_F(p,q):=\inf_{\ell}  \int_{0}^{1}F\big(\ell'(\tau)\big)d\tau
	$$
	for every pair of points $p,q\in X$, where the infimum is taken over all differentiable curves $\ell:[0,1]\to X$ joining $p$
	to $q$.   Since $F$ is continuous and positively definite over $X\setminus \Delta_F$,    
	for any $p\in X\setminus \Delta_F$, one has $d_X(p,q)\geqslant \delta_F(p,q)>0$ for any $q\neq p$, which proves the first statement.
	
	Let us denote by \({\rm Hol}(Y,y)  \) to be the set of holomorphic maps \(\gamma:\db\to Y \) with \(\gamma(0)=y \). Pick any point $y\in U:=Y\setminus \mu\big({\rm Ex}(\mu)\big)$, then there is a unique point \(x \in {X} \) with \(\mu({x})=y \). Hence \(\mu \) induces a bijection between the sets
	\[
	{\rm Hol}({X},{x})\stackrel{\simeq}{\to} {\rm Hol}(Y,y)
	\]
	defined by \(\tilde{\gamma}\mapsto \mu\circ \tilde{\gamma} \). Indeed, observe that \(\mu^{-1}:Y\dashrightarrow X\) is a meromorphic map, so is \(\mu^{-1}\circ {\gamma} \) for any \({\gamma}\in  {\rm Hol}(Y,y)\). Since \(\dim \db=1 \),  the map \(\mu^{-1}\circ {\gamma} \) is moreover holomorphic. It follows from \eqref{KR} that 
	\begin{eqnarray*} 
		\kappa_{{X}}({\xi})=\kappa_{{Y}}\big( {\mu_*({\xi})}\big)
	\end{eqnarray*}
	for any \(\xi\in \ts_{{X},{x}} \). Hence one has 
	$$
	\mu^*\kappa_Y|_{\mu^{-1}(U)}=\kappa_X|_{\mu^{-1}(U)}\geqslant F|_{{\mu^{-1}(U)}}.
	$$
	Let   $G:\ts_U\to [0,+\infty[$ be the Finsler metric on $U$ so that $\mu^{*}G=F|_{{\mu^{-1}(U)}}$. Then $G$ is continuous and positively definite over $U\setminus \mu(\Delta_F)$, and one has 
	$$
	\kappa_Y|_{U}\geqslant G.
	$$
	Therefore,   for any $y\in Y\setminus \mu\big(\Delta_F\cup {\rm Ex}(\mu)\big)$, one has $d_Y(y,z)>0$ for any $z\neq y$, which proves the second statement. 	\end{proof}

The above criteria can be refined further to show the Kobayashi hyperbolicity of the complex manifold.  


\begin{lem}[Bimeromorphic criteria for Kobayashi hyperbolicity]\label{bimeromorphic}
	Let \(X\) be a complex manifold. Assume that for each  point  \(p\in X \), there is  a bimeromorphic morphism \(\mu:\tilde{X}\to X \) with  \(\tilde{X}\) equipped with a negatively  curved Finsler metric \(F \)  such that \(p\notin \mu\big(\Delta_F  \cup {\rm Ex}(\mu) \big) \). Then \(X\) is Kobayashi hyperbolic.	 
\end{lem}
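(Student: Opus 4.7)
The plan is to use the hypothesis to manufacture a \emph{local uniform} infinitesimal lower bound on the Kobayashi--Royden pseudometric \(\kappa_X\) near every point, and then to integrate via \cref{def:distance}.

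Fix an arbitrary \(p\in X\) and choose \(\mu\colon \tilde X\to X\) and \(F\) as provided by the hypothesis, so that \(p\notin \mu\big(\Delta_F\cup {\rm Ex}(\mu)\big)\). I would reuse verbatim the argument from the proof of \cref{pseudo Kobayashi}: on \(U:=X\setminus \mu({\rm Ex}(\mu))\) the meromorphic inverse \(\mu^{-1}\) is holomorphic, so \(\kappa_X|_U\) agrees with the pushforward of \(\kappa_{\tilde X}|_{\mu^{-1}(U)}\), and the Ahlfors--Schwarz lemma (applied to the negatively curved \(F\)) yields \(\kappa_X|_U \geq G\), where \(G\) is the continuous Finsler metric on \(U\) characterised by \(\mu^*G=F|_{\mu^{-1}(U)}\). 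Crucially, \(G\) is continuous and positively definite on the open set \(U\setminus \mu(\Delta_F)\), which by assumption contains \(p\).

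The key step is to convert this pointwise non-degeneracy into a uniform local bound. I would fix a continuous Hermitian metric \(h\) on \(X\), pick a compact neighbourhood \(K\) of \(p\) with \(K\subset U\setminus \mu(\Delta_F)\) (possible because the latter set is open), and compare \(G\) to \(|\cdot|_h\) on the compact unit sphere bundle of \((TX|_K,h)\); continuity plus strict positivity then yields a constant \(c>0\) and an open neighbourhood \(W\) of \(p\) on which \(\kappa_X\geq G\geq c\,|\cdot|_h\).

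To conclude, for any \(q\in X\setminus\{p\}\) every differentiable curve from \(p\) to \(q\) either remains inside \(W\) (with \(\kappa_X\)-length \(\geq c\,d_h(p,q)\)) or leaves \(W\) (its initial arc inside \(W\) already having \(\kappa_X\)-length \(\geq c\,d_h(p,\partial W)\)); infimising gives \(d_X(p,q)\geq c\cdot\min\bigl(d_h(p,q),\,d_h(p,\partial W)\bigr)>0\). Since \(p\) was arbitrary, \(d_X\) is a distance and \(X\) is Kobayashi hyperbolic. The only point that really requires attention is the simultaneous bookkeeping of the two bad loci \(\mu({\rm Ex}(\mu))\) and \(\mu(\Delta_F)\): the hypothesis is tailored precisely so that both can be avoided near \(p\), which is exactly what is needed to apply \cref{pseudo Kobayashi} with no residual degeneracy at \(p\). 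No new curvature estimate or Hodge-theoretic input is required.
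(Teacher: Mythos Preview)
Your proof is correct and follows essentially the same approach as the paper. The paper's argument is even shorter: it simply invokes the \emph{conclusion} of \cref{pseudo Kobayashi} (that \(X\) is Kobayashi hyperbolic modulo \(\mu(\Delta_F\cup{\rm Ex}(\mu))\)), which by \cref{def:pseudo} already gives \(d_X(p,q)>0\) whenever \(p\) is outside that set, whereas you re-derive this via an explicit local uniform bound and integration---but that extra work is exactly what underlies the claim ``\(\delta_F(p,q)>0\)'' in the proof of \cref{pseudo Kobayashi}, so you are just making those details explicit rather than taking a different route.
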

\begin{proof}
	It suffices to show that $d_X(p,q)>0$ for every pair of distinct points $p,q\in X$. We take the bimeromorphic morphism \(\mu:\tilde{X}\to X \) in the lemma with respect to $p$. By \cref{pseudo Kobayashi}, $X$ is  Kobayashi hyperbolic modulo $\mu\big(\Delta_F  \cup {\rm Ex}(\mu) \big)$, which shows that $d_X(p,q)>0$ for any $q\neq p$. The lemma follows.
\end{proof}

\subsection{Curvature formula}\label{sec:curvature}
Let $(\tilde{\es},\tilde{\theta})$ be the VZ Higgs bundles on a quasi-projective manifold $V$ defined in \cref{sec:VZ}.   
In the next two subsections, we will  construct a negatively curved Finsler metric on \(V\) via $(\tilde{\es},\tilde{\theta})$. Our main result is the following.
\begin{thm}[Existence of negatively curved Finsler metrics]\label{construction of Finsler}
	Same notations as \cref{def:VZ}. Assume that $\tau_{1}$ is injective over a non-empty Zariski open set $V_1\subseteq Y\setminus D\cup S$. Then there exists a  Finsler metric \(F\) (see \eqref{Finsler} below) on \(\ts_{Y}(-\log D) \) such that
	\begin{thmlist}
		\item \label{choice of open} it is  positively definite over   \(V_1\).
		\item When $F$ is seen as a Finsler metric on \(V=Y\setminus D\), it is negatively curved in the sense of \cref{negatively curved}. 
	\end{thmlist} 
\end{thm}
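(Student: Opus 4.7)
The plan is to construct $F$ as a (weighted) convex combination of Finsler metrics $F_1,\ldots,F_n$ on $T_Y(-\log D)$ obtained by pulling back singular hermitian metrics on the pieces $\tilde{\es}_k=\ls^{-1}\otimes E^{n-k,k}$ via the iterated Kodaira-Spencer maps $\tau_k$. Concretely, equip each $\tilde{\es}_k$ with the singular hermitian metric $h_k:=g_\alpha^{-1}\otimes h$ of \Cref{singular metric} (for $\alpha\gg 0$ to be fixed), which is locally bounded on $Y$ and smooth on $V_0:=Y\setminus(D\cup S)$. Via the morphism $\tau_k\colon \mathrm{Sym}^k T_Y(-\log D)\to \tilde{\es}_k$ of \eqref{iterated Kodaira2}, define, as in \eqref{induced metric},
\begin{equation*}
 F_k(v):=\bigl(\tau_k^\ast h_k\bigr)^{1/(2k)}(v)=\bigl|\tau_k(v^{\otimes k})\bigr|_{h_k}^{1/k},\qquad v\in T_Y(-\log D),
\end{equation*}
and then set
\begin{equation}\label{Finsler}
 F:=\Bigl(\sum_{k=1}^n \alpha_k F_k^{2}\Bigr)^{1/2},\qquad \alpha_1,\ldots,\alpha_n\in\mathbb{R}_{>0}
\end{equation}
with the positive weights $\alpha_k$ to be chosen later. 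Since $\tau_1$ is injective on the Zariski open set $V_1\subseteq V_0$ and $h_1$ is positively definite on $V_0$, one immediately gets $F_1>0$ on non-zero vectors over $V_1$, hence the positive definiteness of $F$ on $V_1$ claimed in \Cref{choice of open}.

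The heart of the proof is the curvature estimate on $V_0\subset V$. Fix an arbitrary holomorphic disc $\gamma\colon \mathbb{D}\to V_0$ and a tangent vector $\xi=\gamma'(0)$. Computing the Gaussian curvature as in \eqref{Gauss} of $\gamma^\ast F_k^2$, one uses the Griffiths curvature formula for the Hodge bundle $E^{n-k,k}$ exactly as in \eqref{eq:Hodge bundle2}:
\begin{equation*}
 \Theta_{h_k}(\tilde{\es}_k)=-\Theta_{g_\alpha}(\ls)\otimes\mathbbm{1}\;+\;\tilde{\theta}_{n-k+1,k-1}\wedge\tilde{\theta}_{n-k+1,k-1}^{\ast}\;-\;\tilde{\theta}_{n-k,k}^{\ast}\wedge\tilde{\theta}_{n-k,k}.
\end{equation*}
The Lagrange inequality, as in \eqref{eq:crucial} applied to the local section $s_k:=\tau_k(\xi^{\otimes k})$ of $\gamma^\ast\tilde{\es}_k$, then yields a pointwise upper bound for $K_{\gamma^\ast F_k^2}(0)$ of the schematic form
\begin{equation*}
 F_k(\xi)^{2k}\cdot K_{\gamma^\ast F_k^2}(0)\;\leq\; -\,c_k\bigl|\tau_{k+1}(\xi^{\otimes(k+1)})\bigr|^2_{h_{k+1}}\;+\;C_k\bigl|\tau_k(\xi^{\otimes k})\bigr|^2_{h_k}\cdot|\xi|_{\omega_\alpha}^2\;-\;c'_k\sqrt{-1}\Theta_{g_\alpha}(\ls)(\xi,\bar\xi)\cdot F_k(\xi)^{2k},
\end{equation*}
where the first (negative) term comes from $-\tilde{\theta}_{n-k,k}^\ast\wedge \tilde{\theta}_{n-k,k}$ and encodes precisely $F_{k+1}$, the second (positive) term is controlled by the previous Higgs piece, and the last term is strictly negative on $V_0$ by \Cref{estimate}. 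For $k=n$, the first positive contribution is absent because $E^{-1,n+1}=0$, which is the reason the induction can be closed.

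The remaining step is algebraic: choose $\alpha_1\gg\alpha_2\gg\cdots\gg\alpha_n>0$ inductively so that the sum
\begin{equation*}
 \sum_{k=1}^{n}\alpha_k\,F_k(\xi)^{2k}\,K_{\gamma^\ast F_k^2}(0)
\end{equation*}
has the positive $F_k$-term at level $k$ absorbed by the negative $F_{k+1}$-term at level $k+1$, for every $k=1,\ldots,n-1$, while the level-$n$ term is already non-positive. Combined with the uniformly negative contribution $-c\sum_k\alpha_k F_k(\xi)^{2k}\sqrt{-1}\Theta_{g_\alpha}(\ls)(\xi,\bar\xi)$ and the positivity of $\sqrt{-1}\Theta_{g_\alpha}(\ls)$ on $V_0$, one obtains a uniform bound
\begin{equation*}
 K_{\gamma^\ast F^2}(0)\leq -c_0<0\quad\text{whenever}\quad F(\xi)>0,
\end{equation*}
for a constant $c_0>0$ independent of $\gamma$ and $\xi$; this is \Cref{negatively curved} for $F$ viewed on $V=Y\setminus D$. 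The main obstacle is the careful bookkeeping of the mixed positive/negative Griffiths terms when the Hodge metric is only locally bounded across $D+S$ and the metric $g_\alpha$ has a controlled blow-up: one has to verify that the estimates of \Cref{singular metric} make the $(r_D)^{-2}\omega_\alpha$ coming from $\sqrt{-1}\Theta_{g_\alpha}(\ls)$ genuinely dominate all the bounded error terms uniformly in $\xi$, so that the inductive choice of weights $\alpha_k$ succeeds globally on $V_0$ and extends continuously across $D$.
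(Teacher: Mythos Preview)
Your overall architecture is exactly right: build $F_k$ from $\tau_k^*h_g^\alpha$, take a weighted $\ell^2$-sum, and use Griffiths' curvature formula for Hodge bundles together with the convex-sum inequality for Gaussian curvatures. The positive definiteness on $V_1$ via injectivity of $\tau_1$ is also correct. However, the core curvature bookkeeping is reversed, and this propagates into a scheme that does not close.

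Concretely, with $e_k:=\tau_k\bigl((d\gamma(\partial_t))^{\otimes k}\bigr)$ and $G_k:=\lVert e_k\rVert_{h_g^\alpha}^{2/k}$, the Griffiths formula and the Cauchy--Schwarz trick (applied to $\tilde{\theta}^*_{n-k+1,k-1}(\bar\partial_t)e_k$ against $e_{k-1}$) give, for $k\geq 2$,
\[
K_k\ \leq\ \frac{1}{k}\Bigl(+\bigl(\tfrac{G_{k+1}}{G_k}\bigr)^{k+1}-\bigl(\tfrac{G_k}{G_{k-1}}\bigr)^{k-1}\Bigr),
\]
so the term coming from $\tilde\theta_{n-k,k}$ (which feeds into $e_{k+1}$) is the \emph{positive}, bad one, while the term coming from $\tilde\theta_{n-k+1,k-1}$ is the \emph{negative}, good one. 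Your schematic swaps these. Two consequences: (a) the ``positive'' term you write as $C_k\lvert\tau_k\rvert^2\cdot\lvert\xi\rvert_{\omega_\alpha}^2$ is not what appears---the actual positive term is $\lVert e_{k+1}\rVert^2=G_{k+1}^{k+1}$, and there is no reason this equals a constant times $\lVert e_k\rVert^2\lvert\xi\rvert^2$; (b) the weights must satisfy $\alpha_1<\alpha_2<\cdots<\alpha_n$ (increasing), not $\alpha_1\gg\cdots\gg\alpha_n$, because the bad $G_{k+1}$-term at level $k$ is absorbed by the good term at level $k+1$, which needs more weight. The cancellation is also not linear: one has to invoke Schumacher's algebraic inequality (Lemma~17 in \cite{Sch17}) to convert $\sum_j\bigl(\alpha_j G_j^{j+1}/G_{j-1}^{j-1}-\alpha_{j-1}G_j^j/G_{j-1}^{j-2}\bigr)$ into a lower bound of the form $\sum_j\beta_jG_j^2$ with all $\beta_j>0$, and only then compare with $H^2=(\sum j\alpha_jG_j)^2$.

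A second point: the positivity of $\ls$ enters only at $k=1$, where the paper shows the \emph{uniform} lower bound $\Theta_{\ls,g_\alpha}(\partial_t,\bar\partial_t)/\lVert e_1\rVert^2_{h_g^\alpha}\geq c>0$ by comparing $\omega_\alpha$ with the bounded Finsler metric $\tau_1^*(r_D^2h_g^\alpha)$ on the compact $Y$. This is the sole source of the eventual $-c_0$; for $k\geq 2$ the $\Theta_\ls$ contribution is simply discarded. Your plan to harvest a term $-c'_k\,\sqrt{-1}\Theta_{g_\alpha}(\ls)(\xi,\bar\xi)\,F_k^{2k}$ at every level, and then sum to $-c_0$, does not by itself give a bound of the form $K_F\leq -c_0$, because after dividing by $H^2$ you would need $\sum_k\alpha_k F_k^{2k}\cdot\Theta_\ls/H^2$ bounded below by a positive constant, which is not automatic. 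Finally, the passage from $V_0$ to all of $V$ is handled not by ``continuous extension across $D$'' but by the observation that $\log H(t)$ is subharmonic on $C=\gamma^{-1}(V_1)$ with \eqref{subharmonic}, locally bounded on $\mathbb{D}$, hence subharmonic on $\mathbb{D}$, so the curvature inequality persists in the sense of currents.
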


Let us first construct  the desired Finsler metric $F$, and we then proved the curvature property. 
By \eqref{iterated Kodaira2}, for each \(k=1,\ldots,n \), there exists
\begin{eqnarray}\label{contain}
\tau_k: {\rm Sym}^k \ts_{Y}(-\log D) \rightarrow \ls^{-1}\otimes E^{n-k,k}.
\end{eqnarray}
Then it follows from \cref{bounded} that the Finsler metric \(h_g^\alpha \) on \( \ls^{-1}\otimes E^{n-k,k} \) induces  a Finsler metric \(F_k\) on \(\ts_Y(-\log D) \) defined as follows: for any \(e\in  \ts_{Y}(-\log D)_y \),
\begin{eqnarray}\label{k-Finsler}
F_k(e):= (\tau_k^*h_g^\alpha)^{\frac{1}{k}}(e) =h_g^\alpha\big(\tau_k(e^{\otimes k})\big)^{\frac{1}{k}}
\end{eqnarray}
For any \(\gamma:\db\rightarrow  V \), one has
\begin{align*} 
d\gamma:\ts_{\db}\to \gamma^*\ts_V\hookrightarrow \gamma^*\ts_{Y}(-\log D)
\end{align*}
and thus the Finsler metric \(F_k\) induces a continuous Hermitian pseudo-metric on \(\db \), denoted by
\begin{eqnarray}\label{seminorm}
\gamma^*F^2_k:=\sqrt{-1}G_k(t) dt\wedge d\bar{t}.
\end{eqnarray}
In general, \(G_k(t)\) may be identically equal to zero for all $k$. However, 
if we further assume that    \(\gamma(\db)\cap V_1\neq \varnothing\), by  the assumption in \cref{construction of Finsler} that the restriction of $\tau_1$ to $V_1$ is injective, one has \(G_1(t)\not\equiv 0 \). 
Denote by \(\d_t:=\frac{\d}{\d t} \)  the canonical vector fields in \( \db\), and \(\bar{\d}_t:=\frac{\d}{\d \bar{t}}\) its conjugate.  Set \(C:=\gamma^{-1}(V_1) \), and note that \(\db\setminus C \) is a discrete set in \(\db \).     
\begin{lem}\label{curvatureest}
	Assume that \(G_k(t)\not\equiv 0 \) for some \( k>1\). Then   the \emph{Gaussian curvature} \(K_k\) of the continuous pseudo-hermitian metric \(\gamma^*F_k^2 \) on \(C\) satisfies that
	\begin{eqnarray}\label{sectional curvature}
	K_k:=-\frac{	\d^2\log G_k}{\d t\d \bar{t}}/G_k\leqslant \frac{1}{k}\Big(-\big(\frac{G_k}{G_{k-1}} \big)^{k-1}+\big(\frac{G_{k+1}}{G_k} \big)^{k+1}  \Big)
	\end{eqnarray}
	over \(C\subset \db \).
\end{lem}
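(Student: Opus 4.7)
\emph{Plan.} My plan is to perform a direct curvature computation in the spirit of Schumacher, To-Yeung and Berndtsson-P\u{a}un-Wang, adapted to the VZ Higgs bundle setting; the estimate \eqref{sectional curvature} is best read as the natural telescoping inequality that will allow a convex combination of the $F_k^2$'s to have negative holomorphic sectional curvature.

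Fix a local coordinate $t$ on a small disc inside $C$ and set $u_j := \gamma^{*}\tau_j\bigl((\partial_t)^{\otimes j}\bigr)$, a holomorphic section of $\gamma^{*}\tilde{\es}_j=\gamma^{*}(\ls^{-1}\otimes E^{n-j,j})$. By the normalisation \eqref{k-Finsler}--\eqref{seminorm} one has $|u_j|^{2}_{h_g^{\alpha}}=G_j^{\,j}$. The key algebraic input is the composition identity
\[
\tau_{j+1}\bigl(\xi^{\otimes (j+1)}\bigr)\;=\;\tilde{\theta}_{n-j,j}\bigl(\tau_j(\xi^{\otimes j})\bigr)(\xi),\qquad j\geq 0,
\]
which follows because $\tau_j$ is the $j$-fold iterate of the sub-Higgs field $\eta$ applied to $\oc_Y\hookrightarrow\fs_0$, together with the Higgs integrability $\eta\wedge\eta=0$. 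Pulling back to $\db$ and evaluating at $\partial_t$ yields $\gamma^{*}\tilde{\theta}_{n-j,j}(u_j)=u_{j+1}\otimes dt$.

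The computation itself proceeds in three steps, all on the Zariski open subset of $C$ where $u_k\neq 0$. First, since $u_k$ is holomorphic, the Chern-Bochner formula combined with Lagrange's inequality (exactly as in the proof of \cref{mainvz}) gives
\[
-\sqrt{-1}\partial\dbar\log|u_k|^{2}_{h_g^{\alpha}}\;\leq\;\sqrt{-1}\,\frac{\langle\Theta_{\tilde{\es}_k}u_k,u_k\rangle_{h_g^{\alpha}}}{|u_k|^{2}_{h_g^{\alpha}}}.
\]
Second, the Griffiths curvature formula
\[
\Theta_h(E^{n-k,k})\;=\;-\theta_{n-k,k}^{*}\wedge\theta_{n-k,k}\;-\;\theta_{n-k+1,k-1}\wedge\theta_{n-k+1,k-1}^{*}
\]
(valid in the appropriate sign convention as used in \cref{mainvz}) splits $\sqrt{-1}\langle\Theta_{\tilde{\es}_k}u_k,u_k\rangle$ into a non-positive piece $-\sqrt{-1}\Theta_{\ls,g_\alpha}|u_k|^{2}$ which I simply drop, a \enquote*{positive Higgs} piece equal (by the composition identity) to $G_{k+1}^{\,k+1}\sqrt{-1}dt\wedge d\bar t$, and a \enquote*{negative Higgs-adjoint} piece $-\sqrt{-1}|\tilde{\theta}_{n-k+1,k-1}^{*}u_k|^{2}$.

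The main obstacle is to bound the negative adjoint piece \emph{from below}, and this is precisely where the hypothesis $k>1$ is needed: one has to exploit the composition identity at the \emph{previous} stage. Writing $\tilde{\theta}_{n-k+1,k-1}=S\,dt$ in the local frame, one has $u_k=Su_{k-1}$, so Cauchy-Schwarz produces
\[
|u_k|^{2}\;=\;\langle Su_{k-1},u_k\rangle\;=\;\langle u_{k-1},S^{*}u_k\rangle\;\leq\;|u_{k-1}|\cdot|S^{*}u_k|,
\]
giving $|\tilde{\theta}_{n-k+1,k-1}^{*}u_k|^{2}\geq |u_k|^{4}/|u_{k-1}|^{2}=G_k^{\,2k}/G_{k-1}^{\,k-1}$. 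Substituting into the curvature inequality, using $\log|u_k|^{2}=k\log G_k$, and dividing by $kG_k$ yields exactly \eqref{sectional curvature}. This Cauchy-Schwarz step is the heart of the proof: it is the device that will allow the positive $k$-th term to be offset by the negative $(k{-}1)$-th term in the later convex combination \eqref{eq:Finsler convex}, so that the telescoping produces a strictly negative holomorphic sectional curvature; everything else is bookkeeping with the Griffiths formula and the Chern-Bochner trick.
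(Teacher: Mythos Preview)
Your proof is correct and follows essentially the same approach as the paper's own argument: the Chern--Bochner/Lagrange inequality, the Griffiths curvature formula for the Hodge bundle, dropping the non-positive $-\sqrt{-1}\Theta_{\ls,g_\alpha}$ contribution, and the Cauchy--Schwarz trick to bound the adjoint term via the composition identity $u_k=\tilde{\theta}_{n-k+1,k-1}(\partial_t)(u_{k-1})$. Your notation $u_j$ corresponds to the paper's $e_j$, and your $S\,dt$ is their $\tilde{\theta}_{n-k+1,k-1}(\partial_t)$; the substance is identical.
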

\begin{proof}
	For $i=1,\ldots,n$, let us write \(e_i:=\tau_{i}\big(d\gamma (\d_t)^{\otimes i}\big)\), which can be seen as a section of $\gamma^*(\ls^{-1}\otimes E^{n-i,i})$. Then by \eqref{k-Finsler} one observes that 
	\begin{eqnarray}\label{G_i}
	G_i(t)=\lVert e_i\rVert_{h^\alpha_g}^{2/i}.
	\end{eqnarray}  
	Let    \(\mathscr{R}_k=\Theta_{h_g^\alpha}(\ls^{-1}\otimes E^{n-k,k})\) be the  curvature form of \(\ls^{-1}\otimes E^{n-k,k}\) on $V_0:=Y\setminus D\cup S$ induced by the metric \(h_g^\alpha=g_\alpha^{-1}\cdot h\) defined in \cref{bounded}, and let $D'$ be the $(1,0)$-part of the Chern connection  $D$ of $(\ls^{-1}\otimes E^{n-k,k},h_g^\alpha)$. Then  for $k=1,\ldots,n$, one has
	\begin{align*} 
	-\sqrt{-1}\d \dbar\log G_k&= \frac{1}{k}\Big(\frac{\big\{  \sqrt{-1}\rs_k(e_k),e_k\big\} _{h_g^\alpha}}{\lVert e_k\rVert^2_{h_g^\alpha}}+\frac{\sqrt{-1}\{D'e_k,e_k \} _{h_g^\alpha}\wedge \{e_k,D'e_k \} _{h_g^\alpha}}{\lVert e_k\rVert_{h_g^\alpha}^4}-\frac{\sqrt{-1}\{D'e_k,D'e_k \} _{h_g^\alpha}}{\lVert e_k\rVert_{h_g^\alpha}^2}\Big)\\
	& \leqslant  \frac{1}{k}\frac{\big\{  \sqrt{-1}\rs_k(e_k),e_k\big\} _{h_g^\alpha}}{\lVert e_k\rVert^2_{h_g^\alpha}}
	\end{align*}	
	thanks to the Lagrange's inequality 
	$$\sqrt{-1}\lVert e_k\rVert^2_{h_g^\alpha}\cdot \{D'e_k,D'e_k \} _{h_g^\alpha}\geqslant \sqrt{-1}\{D'e_k,e_k \} _{h_g^\alpha}\wedge \{e_k,D'e_k \} _{h_g^\alpha}.$$
	Hence
	\begin{eqnarray}\label{curvature formula}
	-\frac{	\d^2\log G_k}{\d t\d \bar{t}}\leqslant \frac{1}{k}\cdot\frac{\big\langle \rs_k(e_k)(\d_t,\bar{\d}_t),e_k\big\rangle_{h_g^\alpha}}{\lVert e_k\rVert^2_{h_g^\alpha}}.
	\end{eqnarray}	
	Recall that for the logarithmic Higgs bundle \((\bigoplus_{k=0}^{n}E^{n-k,k}, \bigoplus_{k=0}^{n}\theta_{n-k,k}) \), 
	the curvature \(\Theta_k \) on \(E^{n-k,k}_{\upharpoonright V_0} \) induced by the Hodge metric \(h\) is given by
	\[
	\Theta_k=-\theta_{n-k,k}^*\wedge \theta_{n-k,k}-\theta_{n-k+1,k-1}\wedge \theta^*_{n-k+1,k-1},
	\]
	where we recall that  $\theta_{n-k,k}:E^{n-k,k}\to E^{n-k-1,k+1}\otimes \Omega_{Y}\big(\log (D+S)\big)$. 
	Set \(\tilde{\theta}_{n-k,k}:=\vvmathbb{1}\otimes \theta_{n-k,k} : \ls^{-1}\otimes E^{n-k,k}\rightarrow \ls^{-1}\otimes  E^{n-k-1,k+1}\otimes \Omega_{Y}\big(\log (D+S)\big) \), and  one has
	\[
	\xymatrixcolsep{5pc}\xymatrix{\ls^{-1}\otimes E^{n-k+1,k-1} \ar@/^1pc/[r]^{{\tilde{\theta}_{n-k+1,k-1}(\d_t)}} &\ls^{-1}\otimes E^{n-k,k} \ar@/^1pc/[l]^{\tilde{\theta}^*_{n-k+1,k-1}(\bar{\d}_t)} \ar@/^1pc/[r]^{{\tilde{\theta}_{n-k,k}(\d_t)}} & \ls^{-1}\otimes E^{n-k-1,k+1} \ar@/^1pc/[l]^{\tilde{\theta}^*_{n-k,k}(\bar{\d}_t)} }
	\]
	where $\tilde{\theta}^*_{n-k,k}$ is the adjoint of $\tilde{\theta}_{n-k,k}$ with respect to the metric $h_g^\alpha$ over $Y\setminus D\cup S$. Here we also write $\d_t$ (resp. $\bar{\d}_t$) for $d\gamma(\d_t)$ (resp. ${d\gamma(\bar{\d}_t)}$ ) abusively. Then over $V_0$, we   have
	\begin{align}\label{eq:Hodge bundle}
	\rs_k=-\Theta_{\ls,g_\alpha}\otimes \vvmathbb{1}+\vvmathbb{1}\otimes \Theta_k=-\Theta_{\ls,g_\alpha}\otimes \vvmathbb{1}-\tilde{\theta}_{n-k,k}^*\wedge \tilde{\theta}_{n-k,k}-\tilde{\theta}_{n-k+1,k-1}\wedge \tilde{\theta}^*_{n-k+1,k-1}.
	\end{align}
	By 
	the definition of $\tau_k$ in \eqref{iterated Kodaira2}, for any $k=2,\ldots,n$ one has
	\begin{eqnarray}\label{relation}
	e_{k}=\tilde{\theta}_{n-k+1,k-1}(\d_t)(e_{k-1}),
	\end{eqnarray}
	and we can derive   the following  curvature formula
	\begin{align*} 
	\langle \rs_k(e_k)(\d_t,\bar{\d}_t),e_k\big\rangle_{h_g^\alpha}&=-\Theta_{\ls,g_\alpha}(\d_t,\bar{\d}_t)\lVert e_k\rVert_{h_g^\alpha}^2+\\
	&\ \ \ \ \big\langle\tilde{\theta}_{n-k,k}^*(\bar{\d}_t)\circ \tilde{\theta}_{n-k,k}(\d_t)(e_k)-\tilde{\theta}_{n-k+1,k-1}(\d_t)\circ \tilde{\theta}^*_{n-k+1,k-1}(\bar{\d}_t)(e_k),e_k\big\rangle_{h_g^\alpha}\\
	&\leqslant\big\langle\tilde{\theta}_{n-k,k}^*(\bar{\d}_t)\circ \tilde{\theta}_{n-k,k}(\d_t)(e_k),e_k\big\rangle_{h_g^\alpha}\\
	&\ \ \ -\big\langle\tilde{\theta}_{n-k+1,k-1}(\d_t)\circ \tilde{\theta}^*_{n-k+1,k-1}(\bar{\d}_t)(e_k),e_k\big\rangle_{h_g^\alpha}\\
	&\stackrel{\eqref{relation}}{=}\lVert e_{k+1}\rVert_{h_g^\alpha}^2-\lVert \tilde{\theta}^*_{n-k+1,k-1}(\bar{\d}_t)(e_k)\rVert^2_{h_g^\alpha}\\
	&\leqslant\lVert e_{k+1}\rVert_{h_g^\alpha}^2-\frac{|\big\langle \tilde{\theta}^*_{n-k+1,k-1}(\bar{\d}_t)(e_k),e_{k-1}\big\rangle_{h_g^\alpha}|^2}{\lVert e_{k-1}\rVert_{h_g^\alpha}^2}\quad {(\mbox{Cauchy-Schwarz inequality})}  \\
	&=\lVert e_{k+1}\rVert_{h_g^\alpha}^2-\frac{|\big\langle e_k,\tilde{\theta}_{n-k+1,k-1}({\d}_t)(e_{k-1})\big\rangle_{h_g^\alpha}|^2}{\lVert e_{k-1}\rVert_{h_g^\alpha}^2}  \\
	&\stackrel{\eqref{relation}}{=}\lVert e_{k+1}\rVert_{h_g^\alpha}^2-\frac{\lVert e_{k}\rVert_{h_g^\alpha}^4}{\lVert e_{k-1}\rVert_{h_g^\alpha}^2} \\
	&\stackrel{\eqref{G_i}}{=}G_{k+1}^{k+1}-\frac{G_k^{2k}}{G_{k-1}^{k-1}}
	\end{align*}
	Putting this into \eqref{curvature formula}, we  obtain   \eqref{sectional curvature}.
\end{proof}
\begin{rem}\label{Griffiths}
	For the final stage $E^{0,n}$ of the Higgs bundle \((\bigoplus_{q=0}^{n} E^{n-q,q},\bigoplus_{q=0}^{n}\theta_{n-q,q}) \). We make the convention that \(G_{n+1}\equiv 0 \). Then the Gaussian curvature for \(G_n\) in \eqref{curvature formula} is always semi-negative, which is similar as the Griffiths curvature formula for   Hodge bundles in \cite{GT84}.
\end{rem}

When \(k=1 \), by \eqref{curvature formula} one has
\begin{align*}
-\frac{\d^2\log G_1}{\d t\d \bar{t}}/G_1
&\leqslant  \frac{\big\langle \rs_1(e_1)(\d_t,\bar{\d}_t),e_1\big\rangle_{h_g^\alpha}}{\lVert e_1\rVert_{h_g^\alpha}^4}\\
&\stackrel{\eqref{eq:Hodge bundle}}{= }   \frac{-\Theta_{\ls,g_\alpha}(\d_t,\bar{\d}_t)}{\lVert e_1\rVert_{h_g^\alpha}^2}+\\
&\ \ \ \ \frac{\big\langle\tilde{\theta}_{n-1,1}^*(\bar{\d}_t)\circ \tilde{\theta}_{n-1,1}(\d_t)(e_1)-\tilde{\theta}_{n,0}(\d_t)\circ \tilde{\theta}^*_{n,0}(\bar{\d}_t)(e_1),e_1\big\rangle_{h_g^\alpha}}{\lVert e_1\rVert_{h_g^\alpha}^4}\\
&\stackrel{\eqref{relation}}{\leqslant}   \frac{-\Theta_{\ls,g_\alpha}(\d_t,\bar{\d}_t)\lVert e_1\rVert_{h_g^\alpha}^2+\lVert e_{2}\rVert_{h_g^\alpha}^2}{\lVert e_1\rVert_{h_g^\alpha}^4}\\
&=   \frac{-\Theta_{\ls,g_\alpha}(\d_t,\bar{\d}_t) }{\lVert e_1\rVert_{h_g^\alpha}^2}+\big(\frac{G_2}{G_1}\big)^2
\end{align*}
We need the following lemma to control the negative term in the above inequality. 
\begin{lem}
	When \(\alpha\gg 0 \), there exists a \emph{universal} constant \(c>0\), such that for any \(\gamma:\db\rightarrow V \) with  \(\gamma(\db)\cap V_0\neq \varnothing\), one has
	\[
	\frac{\Theta_{\ls,g_\alpha}(\d_t,\bar{\d}_t) }{\lVert e_1\rVert_{h_g^\alpha}^2}\geqslant c.
	\]
	In particular,
	\[
	-\frac{\d^2\log G_1}{\d t\d \bar{t}}/G_1\leqslant -c+ \big(\frac{G_2}{G_1}\big)^2
	\]
\end{lem}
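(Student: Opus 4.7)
The plan is to exploit the fact that the numerator $\Theta_{\ls,g_\alpha}(\d_t,\bar\d_t)$ and the denominator $\lVert e_1\rVert^2_{h_g^\alpha}$ both pick up the \emph{same} singular weight $r_D^{-2}$ coming from \Cref{singular metric}, so the quotient is controlled by purely smooth quantities on the compact manifold $Y$ and the cancellation produces a universal constant. The argument consists of an upper bound on the denominator and a lower bound on the numerator.

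\emph{Upper bound on the denominator.} I would fix once and for all a smooth hermitian reference metric $h_{\rm ref}$ on $\ls^{-1}\otimes E^{n-1,1}$ and a smooth hermitian reference metric $\omega_{\rm ref}$ on $\ts_Y(-\log D)$. By \Cref{new bound}, the metric $r_D^{2}h_g^\alpha$ is locally bounded on $Y$; since $Y$ is compact, this gives a global constant $C_1>0$ with
\[
h_g^\alpha \;\leq\; C_1\, r_D^{-2}\, h_{\rm ref} \quad \text{on } V_0.
\]
Because $\tau_1:\ts_Y(-\log D)\to \ls^{-1}\otimes E^{n-1,1}$ is a morphism between locally free sheaves over the compact $Y$, there is a further constant $C_2>0$ such that $\lVert \tau_1(v)\rVert^2_{h_{\rm ref}}\leq C_2\lVert v\rVert^2_{\omega_{\rm ref}}$ for all $v\in \ts_Y(-\log D)$. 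Combining these with $e_1=\tau_1(d\gamma(\d_t))$ yields
\[
\lVert e_1\rVert^2_{h_g^\alpha} \;\leq\; C_1 C_2\, r_D^{-2}\, \lVert d\gamma(\d_t)\rVert^2_{\omega_{\rm ref}} \quad \text{on } \gamma^{-1}(V_0).
\]

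\emph{Lower bound on the numerator.} By \Cref{estimate}, on $V_0$ one has $\sqrt{-1}\Theta_{\ls,g_\alpha}\geq r_D^{-2}\omega_\alpha$ as Hermitian forms on $\ts_Y(-\log D)$. Since $\omega_\alpha$ is continuous and positively definite on the compact manifold $Y$, it is comparable to $\omega_{\rm ref}$: there is $C_3>0$ with $\omega_{\rm ref}\leq C_3\,\omega_\alpha$. Applying these to the tangent vector $d\gamma(\d_t)$ gives
\[
\Theta_{\ls,g_\alpha}(\d_t,\bar\d_t) \;\geq\; r_D^{-2}\,\omega_\alpha\big(d\gamma(\d_t),\overline{d\gamma(\d_t)}\big) \;\geq\; C_3^{-1} r_D^{-2}\,\lVert d\gamma(\d_t)\rVert^2_{\omega_{\rm ref}}.
\]

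\emph{Conclusion.} Dividing the two inequalities, the singular factor $r_D^{-2}$ and the tangent norm cancel, yielding
\[
\frac{\Theta_{\ls,g_\alpha}(\d_t,\bar\d_t)}{\lVert e_1\rVert^2_{h_g^\alpha}} \;\geq\; \frac{1}{C_1 C_2 C_3} \;=:\; c > 0
\]
on the (nonempty, by hypothesis) locus $\gamma^{-1}(V_0)$ where $e_1\neq 0$, and $c$ is universal in that it depends only on $Y$, the choices made in \Cref{singular metric}, and the reference metrics — not on $\gamma$. Substituting this bound into the inequality for $-\partial_t\bar\partial_t \log G_1 / G_1$ derived immediately before the lemma statement yields the second assertion.

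The step that requires the most care will be the book-keeping around where each inequality holds: the identity $\sqrt{-1}\Theta_{\ls,g_\alpha}\geq r_D^{-2}\omega_\alpha$ is only meaningful on $V_0$, the smoothness of $h_g^\alpha$ breaks down across $D+S$, and the quotient is only defined where $G_1>0$. None of this is deep — the conceptual content is just that \Cref{estimate,new bound} were engineered precisely so that the two factors of $r_D^{-2}$ match, making the ratio bounded below by a constant independent of the map $\gamma$.
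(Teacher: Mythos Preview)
Your proof is correct and follows essentially the same approach as the paper: both use \Cref{estimate} to bound the numerator below by $r_D^{-2}\omega_\alpha$, use \Cref{new bound} to bound the denominator above by $r_D^{-2}$ times a continuous Finsler metric, cancel the $r_D^{-2}$ factors, and invoke compactness of $Y$ for a uniform constant. The only cosmetic difference is that the paper compares $\omega_\alpha$ directly to the pulled-back Finsler metric $\tau_1^*(r_D^2 h_g^\alpha)$ on $\ts_Y(-\log D)$, whereas you route through auxiliary smooth reference metrics $h_{\rm ref}$ and $\omega_{\rm ref}$; the content is identical.
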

\begin{proof}
	By \cref{bounded}, it suffices to prove that 
	\begin{eqnarray}\label{strict positive}
	\frac{\gamma^*\big( r_D^{-2}\cdot \omega_\alpha \big)(\d_t,\bar{\d}_t)}{\lVert e_1\rVert_{h_g^\alpha}^2}\geqslant c.
	\end{eqnarray}
	Note that
	\[
	\frac{\gamma^*\big( r_D^{-2}\cdot \omega_\alpha \big)(\d_t,\bar{\d}_t)}{\lVert e_1\rVert_{h_g^\alpha}^2}=\frac{\gamma^*\big(  \omega_\alpha \big)(\d_t,\bar{\d}_t)}{\gamma^*(r_D^{2})\cdot \lVert e_1\rVert_{h_g^\alpha}^2}=\frac{\gamma^*\omega_\alpha(\d_t,\bar{\d}_t)}{\gamma^*\tau_1^*( r_D^{2}\cdot h_g^\alpha)(\d_t,\bar{\d}_t)},
	\]
	where \(\tau_1^*( r_D^{2}\cdot h_g^\alpha) \) is the Finsler metric on \(\ts_{Y}(-\log D )\) defined by \eqref{induced metric}.
	By \cref{new bound},  \(\omega_\alpha \) is a positively definite Hermitian metric on \(\ts_{Y}(-\log D )\).  Since \(Y\) is compact,   there exists a \emph{uniform constant} \(c>0\)  such that
	\[
	\omega_\alpha\geqslant c\tau_1^*( r_D^{2}\cdot h_g^\alpha).
	\] 
	We thus obtained the desired  inequality \eqref{strict positive}.
\end{proof}
In summary, we have the following curvature estimate for the Finsler metrics $F_1,\ldots,F_n$ defined in  \eqref{k-Finsler}, which is  similar as \cite[Lemma 9]{Sch17} for the Weil-Petersson metric.
\begin{proposition}\label{summary}
	For any \(\gamma:\db\rightarrow V \) such that \(\gamma(\db)\cap V_1\neq \varnothing \). Assume that \(G_k\not\equiv 0 \) for \(k=1,\ldots,q\), and \(G_{q+1}\equiv 0\) (thus \(G_j\equiv 0\) for all \(j>q+1\)). Then \(q\geqslant 1\), and over \(C:=\gamma^{-1}(V_1) \), which is a complement of a discrete set in \(\db \), one has
	\begin{align}\label{eq:curvature1}
	-\frac{\d^2\log G_1}{\d t\d \bar{t}}/G_1&\leqslant  -c+ \big(\frac{G_2}{G_1}\big)^2\\   \label{eq:curvature2}
	-\frac{	\d^2\log G_k}{\d t\d \bar{t}}/G_k&\leqslant \frac{1}{k}\Big(-\big(\frac{G_k}{G_{k-1}} \big)^{k-1}+\big(\frac{G_{k+1}}{G_k} \big)^{k+1}  \Big) \quad \forall 1<k \leqslant  q.
	\end{align}
	Here the constant \(c>0\) does not depend on the choice of \(\gamma \).
\end{proposition}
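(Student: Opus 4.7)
The plan is to assemble the proposition from the two curvature estimates already carried out just above, together with the basic non-triviality statement $q\geq 1$. The argument splits naturally into three short steps, and there is no genuinely new analytical work to do — the heavy lifting (Cauchy--Schwarz, Lagrange's inequality, and the iteration identity $e_{k+1}=\tilde{\theta}_{n-k,k}(\d_t)(e_k)$) has already been performed in \cref{curvatureest} and its sequel.

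First I would verify the discreteness claim on $\db\setminus C$. Since $V_1$ is a non-empty Zariski open subset of $V$, its complement is a proper analytic subset; the assumption $\gamma(\db)\cap V_1\neq\varnothing$ then forces $\gamma^{-1}(V\setminus V_1)$ to be a \emph{proper} analytic subset of the one-dimensional disk $\db$, hence discrete, so $C$ is a dense open subset of $\db$. Second, to establish $q\geq 1$ (equivalently $G_1\not\equiv 0$), one may assume $\gamma$ is non-constant, for otherwise $d\gamma\equiv 0$ makes every $G_k$ vanish identically and the statement is vacuous. Non-constancy implies $d\gamma(\d_t)$ is zero only on a discrete subset of $\db$, so one can choose $t_0\in C$ with $d\gamma(\d_t)(t_0)\neq 0$. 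By the hypothesis of \cref{construction of Finsler}, $\tau_1$ is injective at the point $\gamma(t_0)\in V_1$, and therefore $e_1(t_0)=\tau_1\bigl(d\gamma(\d_t)(t_0)\bigr)\neq 0$, so that $G_1(t_0)=\lVert e_1(t_0)\rVert_{h_g^\alpha}^2>0$. This is $q\geq 1$, and it is precisely the place where the generic local Torelli \cref{mainvz} enters.

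Third, the two displayed inequalities are direct quotations of previous estimates. The bound \eqref{eq:curvature1} is the conclusion of the unnamed lemma stated immediately before the proposition, where the universal constant $c>0$ (independent of $\gamma$) is extracted from the lower bound $\sqrt{-1}\Theta_{g_\alpha}(\ls)\geq r_D^{-2}\cdot\omega_\alpha$ of \cref{estimate} combined with the compactness of $Y$. The bound \eqref{eq:curvature2} for $1<k\leq q$ is exactly \cref{curvatureest}; note that at the endpoint $k=q$ the convention $G_{q+1}\equiv 0$ makes the term $(G_{q+1}/G_q)^{q+1}$ vanish, so the right-hand side is strictly negative, in line with the Griffiths-type observation in \cref{Griffiths}. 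If there is any subtlety at all, it is the insistence that the constant $c$ is \emph{uniform} in $\gamma$, but this was already designed into the construction of $\omega_\alpha$ and $h_g^\alpha$ via \cref{singular metric}.
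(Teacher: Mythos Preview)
Your proposal is correct and matches the paper's treatment exactly: the proposition carries no separate proof in the paper and is stated as a summary of the estimate in \cref{curvatureest} (giving \eqref{eq:curvature2}), the unnamed lemma immediately preceding it (giving \eqref{eq:curvature1} with the uniform constant $c$), and the non-triviality $G_1\not\equiv 0$ recorded in the paragraph before \cref{curvatureest}. One cosmetic remark: calling the constant-$\gamma$ case ``vacuous'' is slightly loose, since a constant map into $V_1$ would formally give $q=0$; but the paper itself silently works only with non-constant disks (as is standard for the downstream Ahlfors--Schwarz application), so this is not a genuine gap.
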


\subsection{Construction of the Finsler metric}\label{sec:construction}
By \cref{summary},  we observe that none of the Finsler metrics $F_1,\ldots,F_n$ defined in  \eqref{k-Finsler} is negatively curved. 
Following the similar strategies in \cite{TY14,Sch17,BPW17}, we construct a new Finsler metric $F$ (see \eqref{Finsler} below) by defining a convex sum of all   \(F_1,\ldots,F_n\),  to cancel the   positive terms in \eqref{eq:curvature1} and \eqref{eq:curvature2} by negative terms in the next stage.  By \cref{Griffiths}, we observe that the highest last order term is always semi-negative.  We mainly follow the computations in \cite{Sch17}, and try to make this subsection as self-contained as possible.  Let us first recall the following basic inequalities by Schumacher.\noindent 
\begin{lem}[\!\protect{\cite[Lemma 8]{Sch12}}]
	Let \(V\) be a complex manifold, and let 
	\(G_1,\ldots,G_n\)  be non-negative \(\mathscr{C}^2\) functions on \(V\). Then
	\begin{eqnarray}\label{calculus}
	\sqrt{-1}\d\bar{\d}\log (\sum_{i=1}^{n}G_i )\geqslant \frac{\sum_{j=1}^{n}G_j\sqrt{-1}\d\bar{\d}\log G_j}{\sum_{i=1}^{n}G_i}
	\end{eqnarray}
\end{lem}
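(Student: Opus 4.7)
\medskip

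The plan is to reduce the inequality \eqref{calculus} to a pointwise Cauchy--Schwarz estimate on $(1,0)$-forms. Writing $G:=\sum_{i=1}^{n}G_i$ and working at a point where all $G_i>0$ (the general case then follows by continuity / standard limiting arguments since both sides are currents of order zero and the singular locus $\{G=0\}$ is where the statement is vacuous), the standard identity $\partial\bar{\partial}\log u=\frac{\partial\bar\partial u}{u}-\frac{\partial u\wedge\bar\partial u}{u^{2}}$ applied to $u=G$ and $u=G_j$ yields
\begin{equation*}
G\cdot\sqrt{-1}\partial\bar\partial\log G=\sum_{i}\sqrt{-1}\partial\bar\partial G_i-\frac{\sqrt{-1}\,\partial G\wedge\bar\partial G}{G},
\end{equation*}
\begin{equation*}
\sum_{j}G_j\sqrt{-1}\partial\bar\partial\log G_j=\sum_{j}\sqrt{-1}\partial\bar\partial G_j-\sum_{j}\frac{\sqrt{-1}\,\partial G_j\wedge\bar\partial G_j}{G_j}.
\end{equation*}

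Subtracting, the Hessian terms $\sum_i\sqrt{-1}\partial\bar\partial G_i$ cancel, and the inequality \eqref{calculus} reduces to the pointwise claim, for every $(1,0)$-tangent vector $\xi$,
\begin{equation*}
\sum_{j=1}^{n}\frac{|\partial G_j(\xi)|^{2}}{G_j}\ \geq\ \frac{|\partial G(\xi)|^{2}}{G}=\frac{\bigl|\sum_{i=1}^{n}\partial G_i(\xi)\bigr|^{2}}{\sum_{i=1}^{n}G_i}.
\end{equation*}
This is precisely the Cauchy--Schwarz inequality applied to the vectors $a_i:=\sqrt{G_i}$ and $b_i:=\partial G_i(\xi)/\sqrt{G_i}$, since $\bigl|\sum_{i}a_i b_i\bigr|^{2}\leq\bigl(\sum_{i}|a_i|^{2}\bigr)\bigl(\sum_{i}|b_i|^{2}\bigr)$ gives exactly the desired bound.

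There is essentially no obstacle here; the main care is only in the boundary case where some $G_j$ vanishes. If $G_j(x)=0$ then $\partial G_j(x)=0$ as well (since $G_j\geq 0$ is $\mathscr{C}^2$ and attains its minimum there), so one interprets $|\partial G_j(\xi)|^{2}/G_j$ as $0$ in that summand and the Cauchy--Schwarz step goes through after dropping the vanishing indices. One multiplies the resulting inequality by $1/G$ and rearranges to obtain \eqref{calculus} in the sense of real $(1,1)$-currents. This completes the proof outline.
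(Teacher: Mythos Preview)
Your proof is correct and is the standard argument for this inequality: expand $\partial\bar\partial\log(\cdot)$ via the identity $\partial\bar\partial\log u=\dfrac{\partial\bar\partial u}{u}-\dfrac{\partial u\wedge\bar\partial u}{u^{2}}$, cancel the Hessian terms, and reduce to the Cauchy--Schwarz inequality $\bigl|\sum_i\partial G_i(\xi)\bigr|^{2}\leq\bigl(\sum_i G_i\bigr)\bigl(\sum_i|\partial G_i(\xi)|^{2}/G_i\bigr)$. The paper itself does not prove this lemma; it is quoted directly from \cite[Lemma~8]{Sch12}, so there is no in-paper argument to compare against, but your approach coincides with Schumacher's original one.

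One small remark on the boundary case: while it is true that $G_j(x)=0$ forces $\partial G_j(x)=0$ for a nonnegative $\mathscr C^2$ function, the quotient $|\partial G_j|^{2}/G_j$ is in general only bounded (not continuous) near such a zero, so the phrase ``interpret as $0$'' is a slight oversimplification. This does not affect the applications in the paper, since the lemma is invoked in \cref{convex sum} only over the open set $C$ where all the $G_k$ are strictly positive.
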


\begin{lem}[\!\protect{\cite[Lemma 17]{Sch17}}]
	Let \(\alpha_j>0 \) for \(j=1,\ldots,n \). Then for all \(x_j\geqslant 0 \)
	\begin{align}\nonumber
	&\sum_{j=2}^{n}(\alpha_{j}x_j^{j+1}-\alpha_{j-1}x_j^j )x_{j-1}^2\cdot\ldots\cdot x_1^2\\\label{complicate}
	&\geqslant \frac{1}{2}\Bigg(-\frac{\alpha_1^3}{\alpha_2^2}x_1^2+\frac{\alpha_{n-1}^{n-1}}{\alpha_n^{n-2}}x_n^2\cdot\ldots\cdot x_1^2+\sum_{j=2}^{n-1}\bigg(\frac{\alpha_{j-1}^{j-1}}{\alpha_j^{j-2}} -\frac{\alpha_{j}^{j+2}}{\alpha_{j+1}^{j+1}}\bigg)x_j^2\cdot\ldots\cdot x_1^2 \Bigg)
	\end{align}
\end{lem}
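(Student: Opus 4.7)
The plan is to attack the inequality term-by-term using the weighted Young (equivalently AM--GM) inequality. Each summand on the LHS has the shape $(\alpha_j x_j^{j+1}-\alpha_{j-1}x_j^j)\,x_{j-1}^2\cdots x_1^2$, so the natural move is to bound the negative monomial $\alpha_{j-1}x_j^j$ from above by a convex combination of $x_j^{j+1}$ and $x_j^2$ (the two ``extreme'' monomials actually visible on the RHS). Since $\tfrac{j-2}{j-1}+\tfrac{1}{j-1}=1$ and $\tfrac{j-2}{j-1}(j+1)+\tfrac{1}{j-1}\cdot 2=j$, weighted AM--GM yields, for every $s_j>0$ and every $j\ge 3$,
$$x_j^j \;\le\; \frac{j-2}{j-1}\,s_j\,x_j^{j+1}+\frac{1}{j-1}\,s_j^{-(j-2)}\,x_j^2,$$
with the boundary case $j=2$ handled by the classical Young inequality with exponents $3$ and $3/2$ (which produces $x_2^2\le\tfrac{2}{3}tx_2^3+\tfrac{1}{3}t^{-2}$).

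Next I would tune each scaling parameter $s_j$ so that the $x_j^{j+1}$ coefficient on the right absorbs precisely \emph{half} of the positive monomial $\alpha_j x_j^{j+1}$ from the same summand; for $j\ge 3$ this amounts to the choice $s_j=\tfrac{(j-1)\alpha_j}{2(j-2)\alpha_{j-1}}$, and analogously for $j=2$. After multiplying through by $\alpha_{j-1}\,x_{j-1}^2\cdots x_1^2$, each summand of the LHS is then bounded from below by
$$\frac{\alpha_j}{2}\,x_j^{j+1}\,x_{j-1}^2\cdots x_1^2\;-\;\kappa_j\,x_j^2\,x_{j-1}^2\cdots x_1^2,$$
where a direct calculation shows that $\kappa_j$ is a constant multiple of $\alpha_{j-1}^{j-1}/\alpha_j^{j-2}$ --- exactly the algebraic form appearing on the RHS. (This is the essential reason the peculiar combinations $\alpha_{j-1}^{j-1}/\alpha_j^{j-2}$ arise: they are the outputs of the unique optimization of the above Young inequality that halves the leading term.)

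Summing over $j\in\{2,\ldots,n\}$ and then processing the surviving ``half'' terms $\tfrac{\alpha_j}{2}x_j^{j+1}x_{j-1}^2\cdots x_1^2$ by one further weighted AM--GM splitting against the neighbouring residuals yields the telescoping structure of the RHS. More precisely, the residual produced at index $j+1$ combines with the remaining half-positive piece at index $j$ to give $+\tfrac{1}{2}\bigl(\tfrac{\alpha_{j-1}^{j-1}}{\alpha_j^{j-2}}-\tfrac{\alpha_j^{j+2}}{\alpha_{j+1}^{j+1}}\bigr)x_j^2\cdots x_1^2$ for $2\le j\le n-1$; the top index $j=n$ has no successor and contributes the pure positive term $+\tfrac{1}{2}(\alpha_{n-1}^{n-1}/\alpha_n^{n-2})\,x_n^2\cdots x_1^2$; and the boundary discrepancy at $j=2$ (where weighted AM--GM with weight $(j-2)/(j-1)=0$ degenerates and is replaced by the exponents $3,3/2$) supplies the negative defect $-\tfrac{1}{2}(\alpha_1^3/\alpha_2^2)\,x_1^2$.

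The main obstacle is purely combinatorial bookkeeping: the single scaling freedom $s_j$ per index must simultaneously (i) halve the leading monomial $\alpha_j x_j^{j+1}$ so that the residuals telescope correctly, and (ii) produce a residual coefficient $\kappa_j$ of exactly the form $\tfrac{1}{2}\alpha_{j-1}^{j-1}/\alpha_j^{j-2}$ so that the $x_j^2\cdots x_1^2$ contributions match the RHS. Both requirements are in fact forced by the same one-parameter optimization, which is why the statement of \eqref{complicate} takes the apparently mysterious algebraic shape it does --- it is literally the output of applying optimal Young's inequality at every level and telescoping. Once this matching is verified index-by-index, the inequality follows by direct summation.
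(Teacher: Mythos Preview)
The paper does not supply its own proof of this lemma; it is quoted verbatim from Schumacher \cite[Lemma~17]{Sch17} and used as a black box. So there is no in-paper argument to compare against, and your proposal must stand on its own.

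Your plan has a genuine gap. With the parameter choice $s_j=\tfrac{(j-1)\alpha_j}{2(j-2)\alpha_{j-1}}$, the first weighted AM--GM step yields
\[
\alpha_j x_j^{j+1}-\alpha_{j-1}x_j^j \;\ge\; \tfrac{\alpha_j}{2}x_j^{j+1}-\kappa_j x_j^2,
\qquad
\kappa_j=\frac{1}{j-1}\Big(\frac{2(j-2)}{j-1}\Big)^{j-2}\cdot\frac{\alpha_{j-1}^{j-1}}{\alpha_j^{j-2}}.
\]
The numerical prefactor equals $\tfrac12$ only for $j=3$; already for $j=4$ it is $\tfrac{16}{27}>\tfrac12$, and it keeps growing. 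So your claim that ``both requirements are forced by the same one-parameter optimization'' is false: halving the $x_j^{j+1}$ coefficient does \emph{not} simultaneously deliver the residual $\tfrac12\,\alpha_{j-1}^{j-1}/\alpha_j^{j-2}$ that the RHS demands. The vaguely described ``second AM--GM splitting'' then has to repair a defect of the wrong sign and involving mismatched powers of $x_j$ and $x_{j+1}$; as written there is no mechanism that does this.

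A route that actually works: reindex the second sum on the RHS by $j\mapsto j-1$ to see that \eqref{complicate} is equivalent to the \emph{completely decoupled} family
\[
\alpha_j x_j^{j+1}-\alpha_{j-1}x_j^j \;\ge\; \frac12\,\frac{\alpha_{j-1}^{j-1}}{\alpha_j^{j-2}}\,x_j^2-\frac12\,\frac{\alpha_{j-1}^{j+1}}{\alpha_j^{j}},\qquad j=2,\dots,n,
\]
each of which, after substituting $y=\alpha_j x_j/\alpha_{j-1}$, becomes the single-variable inequality
\[
y^{j+1}-y^j-\tfrac12 y^2+\tfrac12=(y-1)\Big(y^j-\tfrac{y+1}{2}\Big)\ge 0,
\]
valid for all $y\ge0$ by an elementary sign analysis of the second factor on $[0,1]$ and $[1,\infty)$. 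This factorisation is the actual reason for the specific constants in \eqref{complicate}; it is not an artefact of a two-pass AM--GM telescope.
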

Set \(x_j=\frac{G_j}{G_{j-1}} \) for \(j=2,\ldots,n \) and \(x_1:=G_1\) where \(G_j\geqslant 0 \) for \(j=1,\ldots,n \). Put them into \eqref{complicate} and we obtain
\begin{align}\nonumber
&\sum_{j=2}^{n}\Big(\alpha_{j}\frac{G_j^{j+1}}{G_{j-1}^{j-1}}-\alpha_{j-1}\frac{G_j^{j}}{G_{j-1}^{j-2}} \Big)\\\label{final}
&\geqslant \frac{1}{2}\Bigg(-\frac{\alpha_1^3}{\alpha_2^2}G_1^2+\frac{\alpha_{n-1}^{n-1}}{\alpha_n^{n-2}}G_n^2+\sum_{j=2}^{n-1}\bigg(\frac{\alpha_{j-1}^{j-1}}{\alpha_j^{j-2}} -\frac{\alpha_{j}^{j+2}}{\alpha_{j+1}^{j+1}}\bigg)G_j^2 \Bigg)
\end{align} 

The following technical lemma is crucial in constructing our negatively curved Finsler metric $F$.
\begin{lem}[\!\protect{ \cite[Lemma 10]{Sch17}}]\label{convex sum}
	Let \(F_1,\ldots,F_n\) be Finsler metrics on a complex space \(X\), with the holomorphic sectional curvatures denoted by \(K_1,\ldots,K_n\). Then for the Finsler metric \(F:=(F^2_1+\ldots+F^2_n)^{1/2}\), its holomorphic sectional curvature
	\begin{eqnarray}\label{sectional}
	K_F\leqslant \frac{\sum_{j=1}^{n}K_jF_j^4}{F^4}.
	\end{eqnarray}
\end{lem}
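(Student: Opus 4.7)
The plan is to reduce the sectional-curvature bound to a pointwise estimate for the Gaussian curvature on an arbitrary disk, and then invoke the concavity-type inequality \eqref{calculus}. Fix a point \(x\in X\), a nonzero direction \([v]\subset \ts_{X,x}\), and any holomorphic map \(\gamma:\db\to X\) with \(\gamma(0)=x\) and \(\gamma'(0)\in [v]\). Write the pulled-back pseudo-metrics as
\[
\gamma^*F_j^2=\sqrt{-1}\,\lambda_j(t)\,dt\wedge d\bar t,\qquad \gamma^*F^2=\sqrt{-1}\,\lambda(t)\,dt\wedge d\bar t,
\]
so that \(\lambda=\sum_{j=1}^n\lambda_j\). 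The definition \eqref{Gauss} of Gaussian curvature gives, at every point where \(\lambda_j>0\),
\[
\frac{\d^2\log\lambda_j}{\d t\,\d\bar t}=-K_{\gamma^*F_j^2}\,\lambda_j.
\]

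Next, I would apply \eqref{calculus} to the \(n\) non-negative functions \(\lambda_1,\ldots,\lambda_n\) on \(\db\) and extract the coefficient of \(\sqrt{-1}\,dt\wedge d\bar t\):
\[
\frac{\d^2\log\lambda}{\d t\,\d\bar t}\;\geqslant\;\frac{\sum_{j}\lambda_j\,\frac{\d^2\log\lambda_j}{\d t\,\d\bar t}}{\lambda}\;=\;-\frac{\sum_{j}K_{\gamma^*F_j^2}\,\lambda_j^2}{\lambda}.
\]
Dividing through by \(\lambda\) and invoking \eqref{Gauss} once more for \(\gamma^*F^2\) produces the central pointwise inequality
\[
K_{\gamma^*F^2}(t)\;\leqslant\;\frac{\sum_{j}K_{\gamma^*F_j^2}(t)\,\lambda_j(t)^2}{\lambda(t)^2}.
\]
Evaluating at \(t=0\) and using \(\lambda_j(0)=F_j^2(\gamma'(0))\) together with \(\lambda(0)=F^2(\gamma'(0))\) yields
\[
K_{\gamma^*F^2}(0)\;\leqslant\;\frac{\sum_{j}K_{\gamma^*F_j^2}(0)\,F_j^4(\gamma'(0))}{F^4(\gamma'(0))}.
\]

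To pass from this per-disk estimate to the sectional-curvature bound \eqref{sectional}, I take the supremum over all \(\gamma\) tangent to \([v]\) at \(x\) as in \cref{def:sectional}. In every intended use each \(F_j\) will already be arranged so that its holomorphic sectional curvature is non-positive, hence \(K_{\gamma^*F_j^2}(0)\leqslant K_j\leqslant 0\); consequently each term in the numerator only grows when one replaces \(K_{\gamma^*F_j^2}(0)\) by \(K_j\), and taking sup on the left then yields exactly \eqref{sectional}. The only step with any content is the use of \eqref{calculus}: it is precisely this concavity inequality that brings out the weights \(F_j^4/F^4\), whereas a naive expansion of \(\log\sum_j\lambda_j\) would produce mixed terms one cannot control. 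I expect no serious obstacle beyond this.
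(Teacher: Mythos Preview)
Your argument is essentially identical to the paper's: pull back along an arbitrary disk, apply the concavity inequality \eqref{calculus} to the functions \(\lambda_j\), replace each \(K_{\gamma^*F_j^2}(0)\) by the supremum \(K_j\), and take the supremum on the left. The paper does exactly this, without further comment.

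One unnecessary detour: you invoke the hypothesis \(K_j\leqslant 0\) in order to justify the replacement \(K_{\gamma^*F_j^2}(0)\,F_j^4\leqslant K_j\,F_j^4\). This hypothesis is not needed. By \cref{def:sectional} one has \(K_{\gamma^*F_j^2}(0)\leqslant K_j\) simply because \(K_j\) is defined as a supremum over all such \(\gamma\); multiplying by the non-negative quantity \(F_j^4\) preserves the inequality regardless of the sign of \(K_j\). The right-hand side \(\sum_j K_j F_j^4/F^4\) is then independent of \(\gamma\) (the ratios \(F_j^4/F^4\) depend only on \([v]\) by homogeneity), so the supremum on the left gives \eqref{sectional} in full generality, as stated.
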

\begin{proof}
	For any holomorphic map \(\gamma:\db\rightarrow X \), we denote by \(G_1,\ldots,G_n\) the semi-positive functions on \(\db \) such that
	\[
	\gamma^*F^2_i=\sqrt{-1}G_i dt\wedge d\bar{t}
	\]
	for \(i=1,\ldots,n\). 	Then  \[\gamma^*F^2=\sqrt{-1}(\sum_{i=1}^{n} {G_i})dt\wedge d\bar{t},\] 
	and it follows from \eqref{Gauss} that the Gaussian curvature of \(\gamma^*F^2 \)
	\begin{eqnarray*}
		K_{\gamma^*F^2}&=&-\frac{1}{ \sum_{i=1}^{n} {G_i}  }\frac{\d^2\log (\sum_{i=1}^{n} {G_i}) }{\d t {\d}\bar{t}} \\
		&\stackrel{\eqref{calculus}}{\leqslant}&-\frac{1}{ (\sum_{i=1}^{n} {G_i})^2 }\sum_{j=1}^{n} {G_j}\frac{\d^2\log  {G_j} }{\d t {\d}\bar{t}}\\
		&\leqslant&\frac{\sum_{j=1}^{n}K_jG^{2}_j}{(\sum_{i=1}^{n} {G_i})^2}.
	\end{eqnarray*}
	The lemma  follows from \cref{def:sectional}.
\end{proof}

For any \(\gamma:\db\rightarrow V \) with \(C:=\gamma^{-1}(V_1)\neq \varnothing \), we define a   Hermitian pseudo-metric \(\sigma:=\sqrt{-1}H(t)dt\wedge d\bar{t}\) on \(\db\) by taking convex sum in the following form
\[H(t):=\sum_{k=1}^{n}  {{k\alpha_k}G_k}(t), \]  where \(G_k\) is defined in \eqref{seminorm}, and \(\alpha_1,\ldots,\alpha_n\in \mathbb{R}^+ \) are some \emph{universal constants}  which will be fixed later. Following the similar estimate in \cite[Proposition 11]{Sch17}, one can choose those constants properly such that the Gaussian curvature \(K_\sigma\) of \(\sigma\) is uniformly bounded.
\begin{proposition}\label{uniform}
	There exists \emph{universal} constants \(0<\alpha_1\leqslant\ldots\leqslant\alpha_n\) and \(K>0\) (independent of \(\gamma:\db\rightarrow V \)) such that  the Gaussian curvature 
	\[
	K_{\sigma}\leqslant -K.
	\]
	on \(C\).
\end{proposition}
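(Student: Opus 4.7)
The plan is to adapt the convex-sum method of Schumacher \cite{Sch12,Sch17}: the positive curvature contribution of each stage $F_k^2$ is cancelled against the negative contribution of the next stage $F_{k+1}^2$, using that the top stage is automatically nonpositive (\cref{Griffiths}) and that the first stage benefits from the strict positivity of $\ls$ (the constant $c$ from \cref{summary}).

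First I would apply the log-concavity inequality \eqref{calculus} to $H = \sum_{k=1}^{q} k\alpha_k G_k$, which, combined with the definition \eqref{Gauss} of Gaussian curvature, yields
\[
K_\sigma \leq \frac{1}{H^2}\sum_{k=1}^{q} k\alpha_k G_k^2\,\kappa_k,
\]
where $\kappa_k$ is the Gaussian curvature of $\gamma^*F_k^2$, already controlled by \eqref{eq:curvature1}--\eqref{eq:curvature2}. Substituting those bounds and regrouping turns the right-hand side into a telescoping sum
\[
-\alpha_1 c\, G_1^2 \;-\; \sum_{j=2}^{q}\Big(\alpha_j \tfrac{G_j^{j+1}}{G_{j-1}^{j-1}} - \alpha_{j-1}\tfrac{G_j^{j}}{G_{j-1}^{j-2}}\Big),
\]
the term that would appear at $j=q+1$ being annihilated by $G_{q+1}\equiv 0$. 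Applying the algebraic inequality \eqref{final} with $n$ replaced by $q$ then converts this telescoping sum into a pure quadratic form in $G_1^2,\ldots,G_q^2$, giving
\[
\sum_{k=1}^{q} k\alpha_k G_k^2\,\kappa_k \leq \Big(-\alpha_1 c + \tfrac{\alpha_1^3}{2\alpha_2^2}\Big)G_1^2 - \tfrac{1}{2}\sum_{j=2}^{q-1}\Big(\tfrac{\alpha_{j-1}^{j-1}}{\alpha_j^{j-2}} - \tfrac{\alpha_j^{j+2}}{\alpha_{j+1}^{j+1}}\Big)G_j^2 - \tfrac{\alpha_{q-1}^{q-1}}{2\alpha_q^{q-2}}G_q^2.
\]
Together with the Cauchy--Schwarz bound $H^2 \leq n\sum_{k} k^2\alpha_k^2 G_k^2$, it then suffices to choose universal constants $0 < \alpha_1 \leq \cdots \leq \alpha_n$ making every coefficient of $G_j^2$ above strictly negative with absolute value dominating a fixed positive multiple of $\alpha_j^2$.

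The hard part is this final choice of constants. The truncation index $q$ depends on $\gamma$ and may be any integer in $\{1,\ldots,n\}$; moreover the terminal coefficient $-\alpha_{q-1}^{q-1}/(2\alpha_q^{q-2})$ has a different algebraic shape from the intermediate ones $-\tfrac{1}{2}\bigl(\alpha_{j-1}^{j-1}/\alpha_j^{j-2} - \alpha_j^{j+2}/\alpha_{j+1}^{j+1}\bigr)$, yet one set of $\alpha_j$ must render all of them uniformly negative. I would reconcile the two by arranging inductively that $\alpha_j^{j+2}/\alpha_{j+1}^{j+1}$ is dominated by, say, one half of $\alpha_{j-1}^{j-1}/\alpha_j^{j-2}$ at every $j<n$, so that the intermediate coefficient retains the order of magnitude of the terminal one; the first-stage condition $c > \alpha_1^2/(2\alpha_2^2)$ is then ensured by taking $\alpha_1/\alpha_2$ small relative to $\sqrt{c}$. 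These finitely many constraints can be satisfied simultaneously by a recursive construction analogous to the one in \cite[Proposition~11]{Sch17}, producing the desired universal $\alpha_j$ and the uniform bound $K>0$ independent of $\gamma$.
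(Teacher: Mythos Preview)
Your proposal is correct and follows the same convex-sum strategy as the paper. The one simplification you missed is that the paper applies \eqref{final} with the full index $n$ rather than with the $\gamma$-dependent truncation $q$ (simply using $G_j=0$ for $j>q$, equivalently $x_j=0$ in \eqref{complicate}); the coefficients $\beta_j$ on the right-hand side are then fixed independently of $\gamma$, and what you identify as ``the hard part'' evaporates---there is only one terminal coefficient, at $j=n$, and a single inductive choice of $\alpha_1\le\cdots\le\alpha_n$ making all $n$ coefficients positive suffices uniformly.
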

\begin{proof}
	It follows from \eqref{sectional} that 
	\[
	K_{\sigma}\leqslant \frac{1}{H^2}  \sum_{j=1}^{n}j \alpha_j K_jG^{2}_j 
	\]
	and
	\[
	K_j:=-\frac{	\d^2\log G_j}{\d t\d \bar{t}}/G_j.
	\]
	By \cref{summary}, one has 
	\begin{eqnarray*}
		K_\sigma&\leqslant& \frac{\alpha_1G^2_1}{H^2}\bigg(-c+ \Big(\frac{G_2}{G_1}\Big)^2\bigg)+\frac{1}{H^2}\sum_{j=2}^{n} \alpha_jG^2_j  \bigg(-\Big(\frac{G_j}{G_{j-1}} \Big)^{j-1}+\Big(\frac{G_{j+1}}{G_j} \Big)^{j+1}  \bigg)\\
		&\leqslant&\frac{1}{H^2}\bigg( -c\alpha_1G_1^2-\sum_{j=2}^{n}\Big(\alpha_j\frac{G_j^{j+1}}{G_{j-1}^{j-1}}-\alpha_{j-1}\frac{G_j^{j}}{G_{j-1}^{j-2}} \Big) \bigg)\\
		&\stackrel{\eqref{final}}{\leqslant} & \frac{1}{H^2}\bigg(\Big(-c+\frac{1}{2}\frac{\alpha_1^2}{\alpha_2^2}\Big)\alpha_1G_1^2+\frac{1}{2}\sum_{j=2}^{n-1}\Big(\frac{\alpha_j^{j+2}}{\alpha_{j+1}^{j+1}}-\frac{\alpha_{j-1}^{j-1}}{\alpha_{j}^{j-2}} \Big)G_j^2-\frac{1}{2}\frac{\alpha_{n-1}^{n-1}}{\alpha_n^{n-2}}G_n^2  \bigg)\\
		&=:&-\frac{1}{H^2}\sum_{j=1}^{n}\beta_jG_j^2
	\end{eqnarray*}
	One can take \(\alpha_1=1 \), and choose the further \(\alpha_j>\alpha_{j-1} \) inductively such that \(\min_j \beta_j>0 \). Set \(\beta_0:=\min_j \frac{\beta_j}{(j\alpha_j)^2} \). Then
	\begin{eqnarray*}
		K_\sigma&\leqslant& -\frac{1}{H^2}\beta_0\sum_{j=1}^{n}(j\alpha_jG_j)^2\\
		&\leqslant & -\frac{\beta_0}{nH^2}(\sum_{j=1}^{n}{j\alpha_j}G_j)^2\\
		&=&-\frac{\beta_0}{n}=:-K.
	\end{eqnarray*}
	Note that \(\alpha_1,\ldots,\alpha_n \) and \(K\) is universal. The lemma is thus proved.
\end{proof}
It follows from \cref{uniform} and \eqref{Gauss} that one has the following estimate
\begin{eqnarray}\label{subharmonic}
\frac{\d ^2 \log H(t)}{\d t\d \bar{t}}\geqslant KH(t)\geqslant 0
\end{eqnarray}
over  the Zariski dense open set  \(  C\subseteq  \db \), and in particular \(\log H(t) \) is a subharmonic function over \(C\). Since \(H(t)\in  \mathclose[0,+\infty \mathclose[ \) is continuous   (in particular locally bounded from above) over \(\db\),   \(\log H(t) \) is a subharmonic function over \(\db\), and the estimate \eqref{subharmonic} holds over the whole \(\db \). 

In summary, we construct a \emph{negatively curved     Finsler metric} \(F\) on \(Y\setminus D \), defined by
\begin{eqnarray}\label{Finsler}
F:=(\sum_{k=1}^{n}{k\alpha_k}F^2_k)^{1/2},
\end{eqnarray}
where \(F_k\) is defined in \eqref{k-Finsler}, such that  \(\gamma^*F^2=\sqrt{-1}H(t)dt\wedge d\bar{t} \) for any \(\gamma:\db\to V \). Since we assume that $\tau_{1}$ is injective over $V_0$,  the Finsler metric $F_1$ is positively definite on  \(V_0\), and \emph{a fortiori} $F$. 
Therefore, we finish the proof of \cref{construction of Finsler}.

\subsection{Proof of \cref{Deng}}
\begin{proof}[Proof of \cref{Deng}]
	By \cref{thm:existence},   there is a VZ Higgs bundle over some  birational model $\tilde{V}$ of $V$.  By   \cref{mainvz} and    \cref{construction of Finsler},  we can associate this VZ Higgs bundle with a negatively curved Finsler metric which is positively definite over some Zariski dense open set of $\tilde{V}$.   
	The theorem follows directly from the bimeromorphic criteria for pseudo Kobayashi hyperbolicity in \cref{pseudo Kobayashi}.	
\end{proof}
\begin{rem}
Let me mention that Sun and Zuo also have the similar idea in constructing Finsler metric over the base using Viehweg-Zuo Higgs bundles combining with To-Yeung's method \cite{TY14}. 
\end{rem}
\section{Kobayashi hyperbolicity of the base}
In this section we will prove \cref{main}. We first  refine Viehweg-Zuo's result on the positivity of direct images. We then apply this result to take different branch covering in the construction of VZ Higgs bundles to prove the Kobayashi hyperbolicity of the base in \cref{main}.
\subsection{Preliminary for positivity of  direct images}\label{sec:positivity of direct images}
  
We first recall a \emph{pluricanonical extension theorem} due to Cao \cite[Theorem 2.10]{Cao16}.  Its proof  is a combination of the Ohsawa-Takegoshi-Manivel \(L^2\)-extension theorem, with    the semi-positivity of  \(m\)-relative Bergman metric studied by Berndtsson-P\u{a}un \cite{BP08,BP10} and P\u{a}un-Takayama \cite{PT14}. 
\begin{thm}[Pluricanonical $L^2$-extension]\label{extension}
	Let \(f:X\rightarrow Y\) be an algebraic fiber space so that the Kodaira dimension  of the   general fiber is non-negative. Assume that $f$ is smooth over  a dense Zariski open set of \(Y_0\subset Y\) so that both  $B:=Y\setminus Y_0$ and $f^*B$ are normal crossing.   Let \(L\) be any  pseudo-effective line bundle $L$  on $X$ equipped with a positively curved singular metric  $h_L$ with algebraic singularities satisfying the following property
	\begin{thmlist}
		\item  	 \label{2}	There exists some regular value \(z\in Y \) of \(f\), such that  for some \(m\in \mathbb{N} \), all the sections \(H^0\big(X_{z}, (m K_X+L)_{\upharpoonright X_z}\big) \)  extends locally near \(z\).
		\item \(H^0\big(X_{z}, (m K_{X_z}+L_{\upharpoonright X_z})\otimes \js(h_{L\upharpoonright X_z}^{\frac{1}{m}}) \big)\neq\varnothing \).
	\end{thmlist}  
	Then for any regular value $y$ of $f$ satisfying that
	\begin{thmlist}
		\item all sections \(H^0\big(X_{y}, m K_{X_y}+L_{\upharpoonright X_y}\big) \) extends locally near \(y\),
		\item the metric \(h_{L\upharpoonright X_y} \) is not identically equal to \(+\infty \),
	\end{thmlist} 
 the following restriction map 
	\begin{equation*} 
	\begin{tikzcd}
	H^0 (X, m K_{X/Y} -m\Delta_f+ L +f^* A_Y)  \ar[r,twoheadrightarrow] &  H^0 \big(X_y ,  (m K_{X_y} + L_{\upharpoonright X_y})\otimes \js(h_{L\upharpoonright X_y}^{\frac{1}{m}})\big)  
	\end{tikzcd}
	\end{equation*}
is surjective. Here \(A_Y\)  is a \emph{universal} ample line bundle  on \(Y\) which does not depend on \(L\), \(f\) and \(m\),    and 
	 	\begin{eqnarray}\label{eq:multiplicity}
 	\Delta_{f}:=\sum_{j}(a_j-1) V_j.
	\end{eqnarray} 
	where the sum is taken over all prime divisors $V_j$ of $f^*B$ with multiplicity $a_j$ and its image $f(V_j)$    a divisor in $Y$.
\end{thm}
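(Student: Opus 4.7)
The plan is to prove this via the relative $m$-Bergman kernel technique combined with the Ohsawa-Takegoshi-Manivel extension theorem, in the spirit of Berndtsson-P\u{a}un \cite{BP08,BP10} and P\u{a}un-Takayama \cite{PT14}, which is precisely the framework the theorem attributes the result to. The core idea is to package the extension problem as an Ohsawa-Takegoshi extension from $X_y$ to $X$ for the line bundle $K_{X/Y}$ twisted by the auxiliary bundle $(m-1)K_{X/Y}+L+f^*A_Y$, where the latter will be equipped with a suitable positively curved singular metric that behaves well on $X_y$.

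First, I would construct the relative $m$-Bergman kernel metric $h_{m,B}$ on $mK_{X/Y}+L$ over the smooth locus $f^{-1}(Y_0)$, using fiberwise sections of $(mK_{X_y}+L|_{X_y})\otimes\mathscr{J}(h_{L|X_y}^{1/m})$ with the twisted $L^{2/m}$-norm induced by $h_L^{1/m}$. The crucial step is to verify that (a) $h_{m,B}$ is positively curved on $f^{-1}(Y_0)$ (this is the Berndtsson-P\u{a}un theorem together with the $m$-version of P\u{a}un-Takayama applied to the $m$-th root of the Bergman kernel), and (b) $h_{m,B}$ extends across the normal-crossing boundary $B$ as a positively curved singular metric on the modified bundle $mK_{X/Y}-m\Delta_f+L$, where the correction $-m\Delta_f=-\sum(a_j-1)V_j$ arises precisely because the fiberwise pluricanonical sections acquire multiplicity-$1/a_j$ poles along the non-reduced components $V_j\subset f^*B$ when pulled out of $K_{X_y}$ into $K_{X/Y}$. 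Assumption \ref{2} guarantees that $h_{m,B}$ is non-trivial (not identically $+\infty$) on a neighborhood of $z$, and hence, by the positively-curved extension, on a suitable neighborhood throughout $X$.

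Next, given a test section $u\in H^0\!\big(X_y,(mK_{X_y}+L|_{X_y})\otimes\mathscr{J}(h_{L|X_y}^{1/m})\big)$, I would apply the Ohsawa-Takegoshi-Manivel extension theorem with the ambient line bundle
\[
mK_{X/Y}-m\Delta_f+L+f^*A_Y = K_{X/Y}+\big((m-1)K_{X/Y}-m\Delta_f+L+f^*A_Y\big),
\]
equipped on the second summand with the singular metric
\[
h_{m,B}^{(m-1)/m}\cdot h_L^{1/m}\cdot h_{A_Y},
\]
where $h_{A_Y}$ is a smooth positively curved metric on a universal ample $A_Y$ chosen so that $K_Y+A_Y$ is sufficiently positive (e.g.\ via Angehrn-Siu or an effective global generation statement), independently of $f$, $L$, $m$. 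Positivity of $h_{m,B}$ guarantees the positivity hypothesis of Ohsawa-Takegoshi, while the multiplier ideal appearing on the right-hand side of the restriction map matches $\mathscr{J}(h_{L|X_y}^{1/m})$ because on $X_y$ the metric $h_{m,B}^{(m-1)/m}\cdot h_L^{1/m}$ restricts (by the definition of the Bergman kernel) to a metric whose multiplier ideal is exactly $\mathscr{J}(h_{L|X_y}^{1/m})$, so that $u$ is $L^2$ with respect to the restricted weight. The Ohsawa-Takegoshi extension then yields a global section of $mK_{X/Y}-m\Delta_f+L+f^*A_Y$ restricting to $u$.

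The main obstacle, and the most delicate step, is item (b): verifying that the fiberwise Bergman construction extends as a positively curved singular metric across $B$ with the correct discrepancy $-m\Delta_f$. This requires a careful local analysis in toroidal coordinates adapted to the normal-crossing pair $(f^*B, B)$, using the invariance of log plurigenera and P\u{a}un-Takayama's $L^{2/m}$-extension across boundaries to ensure the metric is bounded above (up to the $\Delta_f$ correction), combined with Berndtsson-P\u{a}un's plurisubharmonic variation theorem applied to the $m$-th root on the smooth locus and an extension-across-pluripolar-set argument to get plurisubharmonicity across $B$. Assumption \ref{2}, applied at the distinguished point $z$, is what prevents the extended metric from being identically $-\infty$ and thus allows the Ohsawa-Takegoshi estimate to produce a non-zero extension at any other good point $y$.
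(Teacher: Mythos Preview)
Your outline is essentially correct and matches the approach the paper attributes to this result. Note, however, that the paper does \emph{not} provide its own proof of this theorem: it is quoted as \cite[Theorem~2.10]{Cao16} and the paper merely summarizes the method in a single sentence---``a combination of the Ohsawa--Takegoshi--Manivel $L^2$-extension theorem, with the semi-positivity of $m$-relative Bergman metric studied by Berndtsson--P\u{a}un \cite{BP08,BP10} and P\u{a}un--Takayama \cite{PT14}''---which is precisely the strategy you describe. So there is nothing to compare against beyond that attribution, and your sketch is consistent with it.
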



We will apply a technical lemma  in \cite[Claim 3.5]{CP17}  to prove  \cref{global generated}.  Let us   first recall some definitions of singularities of divisors in \cite[Chapter 5.3]{Vie95} in a slightly different language.\begin{dfn}
	Let \(X\) be a smooth projective variety, and let \(\ls \) be a line bundle such that \(H^0(X,\ls)\neq \varnothing \).  One defines
	\begin{align}\label{el}
	e(\ls)= {\rm sup}\big\{\frac{1}{c(D)} \mid   D\in |\ls| {{\rm\ is\ an\ effective\ divisor}}  \big\}
	\end{align}
	where
	\[
	c(D):= {\rm sup}\{c>0 \mid (X,c\cdot D) {{\rm\ is \ a \ klt\ divisor}}\}
	\]
	is the \emph{log canonical threshold} of \(D\).
\end{dfn}
Viehweg showed that one can control the lower bound of \(e(\ls) \).
\begin{lem}[\!\protect{\cite[Corollary 5.11]{Vie95}}]\label{control}
	Let \(X\) be a smooth projective variety equipped with a very ample line bundle \(\hs \), and let \(\ls \) be a line bundle such that \(H^0(X,\ls)\neq \varnothing \). 
	\begin{thmlist}
		\item Then there is a uniform estimate
		\begin{align}\label{lct}
		e(\ls)\leqslant c_1(\hs)^{\dim X-1 }\cdot c_1(\ls)+1.
		\end{align}\label{equal}
		\item Let \(Z:=X\times\cdots\times X \) be the \(r\)-fold product. Then for \(\mathscr{M}:=\bigotimes_{i=1}^{r}{\rm pr}_i^*\ls \), one has \(e(\mathscr{M})=e(\ls) \).
	\end{thmlist}
\end{lem}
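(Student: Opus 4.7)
The plan is to establish the two assertions separately, following Viehweg's strategy in \cite[Chapter 5.3]{Vie95}. Both parts reduce the computation of $e(\ls)$ to more tractable invariants: multiplicities of divisors in the first part, and LCTs on a single factor in the second.

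For part (i), my approach is to bound $e(\ls)$ by bounding the maximum multiplicity of any effective divisor $D \in |\ls|$. Concretely, one first proves a general inequality of the form $c(D) \geq 1/(\sigma+1)$, where $\sigma := \max_{x \in X} \mathrm{mult}_x D$: this follows from a single blow-up of a point of maximum multiplicity and an induction on $\dim X$, together with Skoda-type estimates for log canonical thresholds. Next one bounds $\sigma$ intersection-theoretically. Given $x \in X$, cut $X$ with $n-1$ general members of $|\hs|$ passing through $x$, where $n = \dim X$: Bertini gives an irreducible curve $C \subset X$ through $x$, and provided $C \not\subset \mathrm{Supp}(D)$ one has $\mathrm{mult}_x D \leq C \cdot D = c_1(\hs)^{n-1} \cdot c_1(\ls)$ by Bezout. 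The main obstacle is the exceptional case where every general such curve is contained in $\mathrm{Supp}(D)$; this is avoided by replacing $D$ by its residue after peeling off multiples of $\hs$, at the cost of the additive ``$+1$'' in \eqref{lct}. Combining the two steps produces the asserted bound.

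For part (ii), write $Z = X^r$ and let $\mathscr{M} = \bigotimes_i \pr_i^* \ls$. The inequality $e(\mathscr{M}) \geq e(\ls)$ is the straightforward direction: given an effective $D \in |\ls|$, the divisor $D' := \sum_{i=1}^r \pr_i^* D \in |\mathscr{M}|$ has $c(D') = c(D)$. Indeed, pulling back a log resolution $\mu : \tilde{X} \to X$ of $(X, D)$ via the product morphism $\mu^{\times r} : \tilde{X}^r \to X^r$ produces a log resolution of $(Z, D')$, and because the $\pr_i$-pullbacks are mutually transverse (they involve disjoint sets of relative coordinates), the log discrepancies along the various exceptional divisors are the same as on the factor. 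For the reverse inequality, pick any effective $D \in |\mathscr{M}|$, and for a general point $(x_1, \dots, x_{r-1}) \in X^{r-1}$ restrict $D$ to the slice $\{x_1\} \times \cdots \times \{x_{r-1}\} \times X \cong X$; this restriction lies in $|\ls|$. By a standard inversion-of-adjunction argument (or directly from generic smoothness of a log resolution over $X^{r-1}$), the log canonical threshold of the restricted divisor is at most $c(D)$, and this bounds $1/c(D) \leq e(\ls)$.

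The technical heart of both parts is managing the ``general position'' arguments: in (i), guaranteeing that a generic complete intersection curve is not absorbed into $D$, and in (ii), verifying that LCTs behave semicontinuously under the restriction to general fibers of $\pr_{1,\dots,r-1} : Z \to X^{r-1}$. Both can be handled by carefully constructed log resolutions that are ``product-compatible'', so no new ideas beyond \cite{Vie95} are needed.
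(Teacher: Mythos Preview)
The paper does not supply its own proof of this lemma; it is quoted directly from \cite[Corollary~5.11]{Vie95} without argument. Your sketch follows Viehweg's original strategy: for (\lowerromannumeral{1}), bound the log canonical threshold from below via the maximal multiplicity of $D$, then bound that multiplicity by intersecting with a general complete-intersection curve in $|\hs|^{n-1}$ through the point; for (\lowerromannumeral{2}), use the box-product structure together with semicontinuity of the log canonical threshold under restriction to a general fibre of $\pr_{1,\ldots,r-1}$. Both directions of (\lowerromannumeral{2}) are handled correctly.

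One point in (\lowerromannumeral{1}) is off. The ``exceptional case'' you describe --- that every general complete-intersection curve through $x$ lies in $\Supp(D)$ --- cannot occur: since $\hs$ is very ample, the curves through $x$ cut out by $n-1$ general members of $|\hs|$ cover $X$, while $\Supp(D)$ is a proper closed subset. So there is no need to ``peel off multiples of $\hs$'', and this is not the source of the additive $+1$. That constant instead reflects the fact that Viehweg's original invariant $e(D)$ is integer-valued (defined via a rounding in a log resolution) and that the inequality between $1/c(D)$ and the maximal multiplicity is in general not strict; once translated into the LCT language used here, the $+1$ simply absorbs this rounding. Apart from this misattribution, your outline is sound and matches the argument you would find in \cite{Vie95}.
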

Let us recall the following result by Cao-P\u{a}un \cite{CP17}.
\begin{lem}[Cao-P\u{a}un]\label{CP}
	Let \(f:X\rightarrow Y\) be an algebraic fiber space so that the Kodaira dimension  of the   general fiber is non-negative. Assume that $f$ is smooth over  a dense Zariski open set of \(Y_0\subset Y\) so that both  $B:=Y\setminus Y_0$ and $f^*B$ are normal crossing.   
	Then there exists some positive integer \(C\geqslant 2\) so that for any  \(m\geqslant m_0\) and \(a\in \mathbb{N} \),   any \(y\in Y_0 \) and    any section 
	\[
	\sigma \in H^0(X_y,  amC K_{X_y} ),
	\]
	there exists a  section
	\begin{align}\label{eq:Sigma}
	\Sigma\in H^0\big(X,  f^*A_Y-   af^*\det f_*(mK_{X/Y}) +amr_mCK_{X/Y}+ a(P_m+F_m)  \big)
	\end{align}
	whose restriction to the fiber \(X_y\) is equal to \(\sigma^{\otimes r_m} \).  Here   \(F_m\) and \(P_m \) are effective divisors on \(X\) (independent of \(a\)) such that $F_m$ is $f$-exceptional with \(f(F_m)\subset {\rm Supp}(B) \), \( {\rm Supp}(P_m)\subset  {\rm Supp}(\Delta_f)\), \(r_m:=\rank f_*(mK_{X/Y}) \),  and \(A_Y\) is the universal ample line bundle on \(Y\) defined in \cref{extension}.  
\end{lem}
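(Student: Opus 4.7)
The approach is to derive \cref{CP} as an application of the pluricanonical extension \cref{extension} to an auxiliary line bundle that absorbs the determinantal twist. Concretely, I would take
$$L := am(r_mC-1)K_{X/Y} - af^*\det f_*(mK_{X/Y}),$$
so that $mK_{X/Y} + L = am r_m C K_{X/Y} - af^*\det f_*(mK_{X/Y})$ is, up to the twist by $f^*A_Y$ and effective corrections $a(P_m+F_m)$, precisely the line bundle in \eqref{eq:Sigma}. The desired $\Sigma$ will then be the extension of $\sigma^{\otimes r_m}$ produced by \cref{extension}, and $P_m, F_m$ will record the effective contributions needed to pass between the abstract direct image $f_*(mK_{X/Y})$ and its geometric realization on $X$.

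\textbf{Construction of $h_L$.} The positivity of direct images due to Berndtsson--P\u{a}un and P\u{a}un--Takayama \cite{BP08,BP10,PT14} endows $f_*(mK_{X/Y})$ with the positively curved singular relative $m$-Bergman kernel metric, which induces a positively curved metric on $\det f_*(mK_{X/Y})$; the relative $m$-Bergman metric on $K_{X/Y}$ itself is also positively curved. Combining these via a Viehweg-type fibre product/wedge construction (taking $r_m$ evaluation maps $f^*f_*(mK_{X/Y}) \to mK_{X/Y}$ and alternating) yields a natural positively curved metric on
$$mr_mK_{X/Y} - f^*\det f_*(mK_{X/Y}) + F'_m + P'_m$$
for some fixed $f$-exceptional $F'_m$ with $f(F'_m) \subset \Supp(B)$ and $\Supp(P'_m) \subset \Supp(\Delta_f)$. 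The $aC$-th tensor power of this metric (twisted by the one on the fibrewise Bergman kernel on $K_{X/Y}$) produces $h_L$.

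\textbf{Hypotheses of \cref{extension} and extraction of $\Sigma$.} The role of $C$ is to ensure that the multiplier ideal $\js(h_L^{1/m}|_{X_y})$ does not annihilate the class of $\sigma^{\otimes r_m}$. After the usual Ohsawa--Takegoshi rescaling, the singular locus of $h_L^{1/m}|_{X_y}$ is comparable to that of an effective divisor in the linear system of $mr_mK_{X_y} - \det f_*(mK_{X/Y})|_y$, whose log canonical threshold is bounded below uniformly in $y \in Y_0$ by \cref{control}: the estimate \eqref{lct} is insensitive to the divisor chosen, and by \cref{equal} it is unaffected by the fibre product step. Choosing $C \geq 2$ strictly larger than this universal bound and $m \geq m_0$ large enough so that $f_*(mK_{X/Y})$ is locally free over $Y_0$, one obtains $\js(h_L^{1/m}|_{X_y}) = \oc_{X_y}$; local extendability of all sections near $y$ is then automatic from local freeness. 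Applying \cref{extension} produces an extension of $\sigma^{\otimes r_m}$ in $H^0\big(X, mK_{X/Y} - m\Delta_f + L + f^*A_Y\big)$, and absorbing $-m\Delta_f + aP'_m$ into $aP_m$ and $aF'_m$ into $aF_m$ yields \eqref{eq:Sigma}.

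\textbf{Main obstacle.} The key difficulty is to keep the divisors $P_m, F_m$ independent of $\sigma$ and $a$: a naive pointwise application of extension would produce $\sigma$-dependent corrections and be useless downstream. This forces the metric $h_L$ to be built from fixed data (a generating set of $f_*(mK_{X/Y})$ over a fixed neighbourhood, and a fixed Viehweg fibre product model) rather than from $\sigma$ itself, so that only the scalar $a$ and the tensor power $r_m$ of $\sigma$ vary. At this point the \emph{universal} log canonical threshold bound in \cref{control} becomes indispensable, because it guarantees triviality of the multiplier ideal uniformly in $y$, $\sigma$, and $a$, thereby making the choice of $C$ and the resulting divisors $P_m, F_m$ intrinsic to $f$.
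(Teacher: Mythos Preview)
The paper does not prove \cref{CP}; it is quoted from \cite[Claim 3.5]{CP17}. Your outline---determinantal section $\Gamma_m$, uniform LCT bound via \cref{control}, then \cref{extension}---is indeed the Cao--P\u{a}un strategy, and your identification of the main obstacle (keeping $P_m,F_m$ independent of $a$) is exactly right.

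There is, however, a genuine gap in how you invoke \cref{extension}: you take its parameter to be the fixed $m$ of the lemma, but then $h_L^{1/m}$ carries the $a$-th power of $\Gamma_m$, so $\js(h_L^{1/m}|_{X_y})$ sees singularities of order $\tfrac{a}{m}\Gamma_m|_{X_y}$ and becomes arbitrarily deep as $a\to\infty$---no uniform $C$ can make it trivial. (Your displayed identity $mK_{X/Y}+L=amr_mCK_{X/Y}-\cdots$ is in fact false unless $a=1$, which is the same issue surfacing arithmetically.) The remedy is to let the extension parameter scale with $a$: set $\ell:=amr_m(C-1)$ and $L:=a\bigl(mr_mK_{X/Y}-f^*\det f_*(mK_{X/Y})+P'_m+F'_m\bigr)$ with the metric coming from $a\Gamma_m$. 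Then $\ell K_{X/Y}+L$ is the correct bundle, the multiplier ideal reduces to $\js\bigl(h_{\Gamma_m}^{1/(mr_m(C-1))}|_{X_y}\bigr)$, which is independent of $a$ and trivial once $C$ exceeds the bound from \cref{control}, and the term $-\ell\Delta_f=-a\cdot mr_m(C-1)\Delta_f$ is absorbed into $aP_m$ with $P_m$ still independent of $a$.
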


We     recall the definition of   \emph{Koll\'ar family of varieties with semi-log canonical singularities} (\emph{slc family} for short).
\begin{dfn}[slc family]
	An \emph{slc family} is a flat proper morphism $f:X\to B$ such that:
	\begin{thmlist}
		\item 
		each fiber $X_b:=f^{-1}(b)$ is a projective variety with slc singularities.
		\item  $\omega_{X/B}^{[m]}$
		is flat.
		\item \label{kollar condition} The family $f:X\to B$ satisfies the \emph{Koll\'ar condition}, which means that, for any $m\in \mathbb{N}$, the reflexive power $\omega_{X/B}^{[m]}$ commutes with
		arbitrary base change.  
	\end{thmlist}
\end{dfn}
To make \cref{kollar condition} precise, for every base change $\tau:B'\to B$, given the induced
morphism $\rho:X'=X\times_{B}B'\to X$ we have that the natural homomorphism
$\rho^*\omega_{X/B}^{[m]}\to \omega_{X'/B'}^{[m]}$
is an isomorphism. 
Let us collect the  basic properties of slc families, as is well-known to the experts.
\begin{lem}
	Let    $g:Z\to W$ be a surjective morphism between quasi-projective manifolds with connected fibers, which is  birational to an slc family $g':Z'\to W$  whose generic fiber has at most Gorenstein canonical singularities. Then 
	\begin{thmlist}		
		\item  \label{normal canonical} the total space $Z'$ is normal and has only canonical singularities at worst.
		\item \label{KSBA birational}If  $\nu:W'\to W$  	is a dominant morphism with $W'$ smooth quasi-projective, then  	$Z'\times_{W}W'\to W'$ is still an slc family whose generic fiber has at most Gorenstein canonical singularities, and is birational to  
		$(Z\times_{W}W')^{\tsim}\to W'$.
		\item \label{fiber kollar}  Denote by $Z'^r$ the $r$-fold fiber  product $Z'\times_{W}\cdots\times_{W}Z'$.   	Then $g'^r:Z'^r\to W$ is also  an slc family    whose generic fiber has at most Gorenstein canonical singularities.  Moreover, $Z'^r$ is birational to the main component $(Z^r)^{\tsim}$ of $Z^r$ dominating $W$.
		\item  \label{KSBA fiber} Let  $Z^{(r)}$
		be a desingularization of   $(Z^r)^{\tsim}$. Then  $(g^{(r)})_*(\ell K_{Z^{(r)}/W})\simeq (g'^r)_*(\ell K_{Z'^{r}/W})$ is reflexive for every sufficiently divisible  $\ell>0$.
	\end{thmlist}  
\end{lem}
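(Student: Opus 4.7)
The plan is to handle the four items in sequence, with the Kollár condition (compatibility of $\omega^{[m]}_{Z'/W}$ with arbitrary base change) as the central tool. For (i), I would first establish normality of $Z'$ via Serre's criterion $R_1+S_2$: slc implies $S_2$, so it remains to check regularity at codimension-one points, which are either codimension-one points of the Gorenstein canonical (hence normal) generic fiber, or generic points of components of fibers over codimension-one points of the smooth base $W$ --- the latter are regular because slc fibers are reduced and $g'$ is flat over a regular base. Once $Z'$ is normal, slc becomes log canonical with $K_{Z'}$ $\mathbb{Q}$-Cartier. To upgrade to canonical, I would use the smooth birational model $\mu:Z\to Z'$: negative discrepancies could only arise over the non-canonical locus of finitely many closed fibers, and inversion of adjunction along the smooth base $W$, combined with the Gorenstein canonical generic fiber, forces all discrepancies to be nonnegative.

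For (ii), the Kollár condition directly implies that $Z'\times_W W'\to W'$ is an slc family, since $\omega^{[m]}$ pulls back correctly; the generic fiber is unchanged up to a purely transcendental field extension, preserving the Gorenstein canonical property, and the birational identification with $(Z\times_W W')^{\tsim}$ follows by base-changing the $W$-birational map $Z\dashrightarrow Z'$ and restricting to main components.

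For (iii), I would invoke the theorem that the $r$-fold fiber product of an slc Kollár family is again an slc Kollár family --- a cornerstone of the Kovács--Patakfalvi / Kollár stable reduction theory that parallels the functoriality of mild morphisms in Abramovich--Karu. The generic fiber of $g'^r$ is the $r$-fold self-product of the generic fiber of $g'$, and since the dualizing sheaf of a product is the exterior tensor product of the factors, a product of Gorenstein canonical varieties is again Gorenstein canonical. The birational equivalence $Z'^r\dashrightarrow (Z^r)^{\tsim}$ then comes from taking $r$-fold fiber products of $Z\dashrightarrow Z'$ over $W$ and restricting to main components.

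For (iv), combining (iii) with (i) applied to $g'^r$, the variety $Z'^r$ is normal with canonical singularities, so the resolution $Z^{(r)}\to Z'^r$ satisfies $\mu_*\omega_{Z^{(r)}/W}^{\otimes\ell}=\omega_{Z'^r/W}^{[\ell]}$ for every $\ell$ sufficiently divisible to clear discrepancy denominators; pushing forward to $W$ yields the desired identification, with reflexivity following from the Kollár condition applied to $Z'^r$ combined with flatness of $g'^r$. \emph{The main obstacle} is part (iii): fiber products of slc families do not automatically preserve normality or the Kollár condition, and it is precisely this point that the mild-reduction machinery --- underlying Abramovich's $\mathbb{Q}$-mild reduction in the appendix --- is designed to address.
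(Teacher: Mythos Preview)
The paper does not actually prove this lemma: it is introduced with the sentence ``Let us collect the basic properties of slc families, as is well-known to the experts'' and then stated without argument. So there is no proof to compare against; what the paper offers instead are scattered references that justify the individual claims. In the appendix it is noted that Koll\'ar--Shepherd-Barron \cite[Theorem~5.1]{KSB88} and Karu \cite[Theorem~2.5]{Kar00} show that Koll\'ar families of slc varieties with Gorenstein canonical generic fiber are $\mathbb{Q}$-mild; taking $T_1=W$ smooth in the definition of $\mathbb{Q}$-mildness gives part~(i) directly. For the fiber-product and base-change statements in (ii)--(iii), the paper later (in the proof of \cref{desired global}) cites \cite[Proposition~2.12]{BHPS13} and \cite[Lemma~2.6]{KP17}. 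Part~(iv) is then a formal consequence of (i) applied to $g'^r$.

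Your outline is broadly correct and hits the same external inputs, but two points deserve tightening. First, for (i) the phrase ``inversion of adjunction along the smooth base $W$'' is not quite the right mechanism: standard inversion of adjunction compares singularities of a pair $(X,D)$ with those of $D$, not singularities of a total space with those of its fibers. What you actually need is precisely Karu's theorem (or the Koll\'ar--Shepherd-Barron argument) that an slc Koll\'ar family over a smooth base with canonical generic fiber has canonical total space --- this is the $\mathbb{Q}$-mild property with $T_1=W$. Second, you correctly flag (iii) as the crux, and you are right that it is not automatic; but rather than framing it as something the $\mathbb{Q}$-mild machinery is ``designed to address'', it is cleaner to cite the specific results \cite{BHPS13,KP17} that establish closure of slc Koll\'ar families under fiber products, as the paper itself does when it uses this fact. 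With those two citations in place your sketch is complete.
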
 
\subsection{Positivity of direct images} \label{sec:positivity}
This section is devoted to prove \cref{thm:globalgen} on positivity of direct images, which refines results by Viehweg-Zuo \cite[Proposition 3.4]{VZ02} and \cite[Proposition 4.3]{VZ03}. It will be crucially used to proved \cref{main}.
\begin{thm} \label{thm:globalgen}
	Let \(f_0:X_0\rightarrow Y_0\) be a smooth family of projective manifolds of general type. 
	Assume that   for any \(y\in Y_0\),   the set of \(z\in Y_0\) with \( X_z \stackrel{{\rm bir}}{\sim} X_y \) is finite. 
	\begin{thmlist}
		\item \label{global generated} For any   smooth projective compactification $f:X\to Y$ of \(f_0:X_0\rightarrow Y_0\)  and any sufficiently  ample line bundle \(A_Y\) over \(Y\),    \( f_*(\ell K_{X/Y})^{\star\star}  \otimes A_Y^{-1}\) is globally generated over \( Y_0 \) for any \(\ell\gg 0\).  In particular, \(f_*(\ell K_{X/Y})  \) is ample with respect to \(Y_0\).
		\item \label{enough ample}   In the same setting as (\lowerromannumeral{1}),  \(\det f_*(\ell K_{X/Y})\otimes A_Y^{-r_\ell} \) is also globally generated over \(Y_0\) for any \(\ell\gg 0\), where \(r_{\ell}=\rank f_*(\ell K_{X/Y})\). In particular,    \(\mathbf{B}_+\big(\det f_*(\ell K_{X/Y}) \big)\subset Y\setminus Y_0\).
		\item  \label{desired global} 
		For some $r\gg 0$, there exists an algebraic fiber space   $f:X\to Y$ compactifying $X_0^r\to Y_0$,  so that 
		$f_*(\ell K_{{X}/Y})\otimes A_Y^{-\ell} $ is globally generated over $Y_0$ for $\ell$ large and divisible enough.   Here $X_0^r$ denotes to be the $r$-fold fiber product of $X_0\to Y_0$,  and  $A_Y$ is some sufficiently ample line bundle over $Y$. 
	\end{thmlist}   
\end{thm}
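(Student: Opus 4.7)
The plan is to prove (i), (ii), (iii) in order: parts (i) and (ii) would refine the Viehweg--Zuo and Popa--Taji--Wu techniques to the present general-type (not canonically polarised) setting using Cao's pluricanonical extension theorem (\cref{extension}) and the Cao--P\u{a}un lemma (\cref{CP}); (iii) is the principal difficulty and would be derived from (i)--(ii) by passing to a sufficiently high-degree fibre product combined with an asymptotic positivity estimate.

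For (i): fix $y\in Y_0$ and $\sigma\in H^0(X_y,\ell K_{X_y})$ with $\ell$ sufficiently divisible. Cao--P\u{a}un produces a lifting
$$\Sigma\in H^0\bigl(X,\,f^*A_Y-af^*\det f_*(mK_{X/Y})+amr_mC\,K_{X/Y}+a(P_m+F_m)\bigr)$$
restricting to $\sigma^{\otimes r_m}$ on $X_y$. The maximal-variation hypothesis together with the assumption that fibres are of general type give bigness of $\det f_*(mK_{X/Y})$ (via Viehweg's weak positivity, or equivalently Popa--Schnell), so the determinantal twist can be rewritten as a highly positive multiple of $f^*A_Y$ plus an effective divisor. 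Cao's extension theorem then upgrades pointwise extendability to global generation of $f_*(\ell K_{X/Y})^{\star\star}\otimes A_Y^{-1}$ over $Y_0$. Part (ii) is immediate from (i) by taking top exterior powers.

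For (iii): a na\"ive tensor-on-fibre-product argument only yields $A_Y^{-r}$-loss, which is too weak since we need $A_Y^{-\ell}$ for $\ell$ possibly much larger than $r$. Instead, I would apply Cao--P\u{a}un directly to the $r$-fold fibre product $f^{(r)}:X^{(r)}\to Y$, obtaining a lifting with twist
$$f^*A_Y-af^*\det(f^{(r)})_*(mK_{X^{(r)}/Y})+amR_mC\,K_{X^{(r)}/Y}+a\cdot(\text{effective}),$$
where by K\"unneth over $Y_0$ one has $R_m=r_m^r$ and $\det(f^{(r)})_*(mK_{X^{(r)}/Y})|_{Y_0}\simeq(\det f_*(mK_{X/Y}))^{r\cdot r_m^{r-1}}|_{Y_0}$. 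Consequently the ``$A_Y$-slope per unit rank'' of $(f^{(r)})_*(mK_{X^{(r)}/Y})$ is exactly $r$ times that of $f_*(mK_{X/Y})$; by asymptotic Riemann--Roch on a fibre of general type of relative dimension $n$, the latter grows like $m/(n+1)$. Choosing $r>(n+1)C$ \emph{once and for all, independently of $m$}, then forces this slope to exceed $mC$ for $m\gg 0$. Applying (ii) to $f^{(r)}$ decomposes $\det(f^{(r)})_*(mK_{X^{(r)}/Y})$ as a sufficiently positive multiple of $A_Y$ plus an effective class; substituting back exhibits $\Sigma$ as a section of $\ell K_{X^{(r)}/Y}\otimes A_Y^{-\ell}$ modulo effective divisors, where $\ell:=amR_mC$, yielding the claimed global generation over $Y_0$.

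The main obstacles lie in (iii): making the asymptotic slope comparison numerically effective (rather than merely asymptotic), and controlling the boundary and exceptional contributions $P_m+F_m$ and $\Delta_{f^{(r)}}$ uniformly in $m$ so that they remain $f^{(r)}$-exceptional and do not spoil the twist calculation on $Y_0$. This is precisely the point where the slc/$\mathbb{Q}$-mild-reduction framework of \S\ref{sec:positivity of direct images} and the appendix becomes essential: after replacing $f^{(r)}$ birationally by the associated $\mathbb{Q}$-mild model, the correction divisors are $f^{(r)}$-exceptional with image of codimension $\geq 2$, and the computation goes through unchanged when pushed down to $Y_0$.
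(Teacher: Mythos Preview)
Your sketch for (i) and (ii) is essentially the paper's argument, so no issue there.

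For (iii) there is a genuine gap, and the paper's route is quite different. Your key claim is that the ``$A_Y$-slope'' of $f_*(mK_{X/Y})$ grows like $m/(n+1)$ by asymptotic Riemann--Roch on a fibre. But Riemann--Roch on a fibre only computes the \emph{rank} $r_m\sim c\,m^n$; it says nothing about $\det f_*(mK_{X/Y})$ as a multiple of $A_Y$. Part (ii) gives you slope $\geq 1$, not slope $\sim m$. To get linear slope growth you would need a Knudsen--Mumford type expansion $\det f_*(mK_{X/Y})\sim \lambda_{n+1}\,m^{n+1}/(n+1)!+\cdots$ together with a quantitative lower bound for $\lambda_{n+1}$ in terms of $A_Y$; neither is established here and neither follows from Cao--P\u{a}un. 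Without this, your inequality $r/(n+1)>C$ does not yield $r\cdot(\text{slope})>mC$, and the substitution back into the Cao--P\u{a}un lifting fails to produce the $A_Y^{-\ell}$ twist.

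The paper avoids slope-counting entirely. Instead it takes $r=r_\ell:=\operatorname{rank}g'_*(\ell K_{Z'/W})$ for a \emph{single} large $\ell$ (so $r$ depends on $\ell$, not the other way round), passes via the $\mathbb{Q}$-mild reduction to a finite cover $W\to Y$ where the $r$-fold fibre product $Z'^r\to W$ is again an slc family, and uses the canonical inclusion $\det g'_*(\ell K_{Z'/W})\hookrightarrow\bigotimes^r g'_*(\ell K_{Z'/W})\simeq g_*(\ell K_{Z/W})$ to produce an effective divisor $\Gamma\in|\ell K_{Z/W}-g^*\det g'_*(\ell K_{Z'/W})|$. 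The crucial estimate is then Viehweg's bound on the log canonical threshold $c(\Gamma|_{Z_w})\geq 2/((C-1)\ell)$ (\cref{control}), which feeds into Cao's extension theorem. The asymptotic that makes the twist work is $r_\ell\sim \ell^d\gg C\ell$, not a slope computation. Finally the descent from $W$ to $Y$ uses the splitting of the trace map $\psi_*\mathcal{O}_W\to\mathcal{O}_Y$ --- this is where the finite-cover structure is essential, and it is not the role you assigned to the $\mathbb{Q}$-mild reduction.
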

\begin{proof}
	Let us first show that, to prove Claims (\lowerromannumeral{1}) and (\lowerromannumeral{2}), one can assume that both $B:=Y\setminus Y_0$ and $f^*B$ are normal crossing.  
	
	For the arbitrary smooth projective compactification $f':X'\to Y'$ of $f_0:X_0\to Y_0$, we take a log resolution $\nu:Y\to Y'$ with centers supported on $Y'\setminus Y_0$ so that $B:=\nu^{-1}(Y'\setminus Y_0)$ is a simple normal crossing divisor.  Define   $X$ to be strong desingularization  of the main component $(X'\times_{Y'}Y)^{\tsim}$ dominant over $Y$  
	\begin{align} 
	\xymatrix{
		X  \ar[r]\ar[rd]_-{f}&	X'\times_{Y'}Y\ar[d]   \ar[r] & X'\ar[d]^-{f'}  \\
		&	Y  \ar[r]^{\nu} & Y'
	}
	\end{align}
	so that $f^*B$ is normal crossing. By \cite[Lemma 2.5.a]{Vie90}, there is the inclusion
	\begin{align}\label{eq:inclusion Vie}
	\nu_{*}f_*(mK_{X/Y})\hookrightarrow f'_*(mK_{X'/Y'})
	\end{align} 
	which is an isomorphism over $Y_0$ for each $m\in \mathbb{N}$. Hence for any    ample line bundle $A$ over $Y'$, once  \( f_*(m K_{X/Y})^{\star\star}  \otimes (\nu^*A)^{-1}\) is globally generated over \( \nu^{-1}(Y_0)\simeq Y_0 \) for some $m\geqslant 0$,    $f'_*(mK_{X'/Y'})^{\star\star}  \otimes A^{-1}$ will be also  globally generated over \(Y_0 \). As we will see, Claim (\lowerromannumeral{2}) is a direct consequence of Claim (\lowerromannumeral{1}). This  proves the above statement.

	\begin{enumerate}[leftmargin=0cm,itemindent=0.7cm,labelwidth=\itemindent,labelsep=0cm, align=left,label= {\rm (\roman*)},itemsep=0.07cm]
		\item Let us fix a sufficiently ample line bundle $A_Y$ on $Y$. Assume that  both $B:=Y\setminus Y_0$ and $f^*B$ are normal crossing.  	It follows from \cite[Theorem 5.2]{Vie90} that one can take some \(b\gg a\gg 0 \),  \(\mu\gg m\gg 0 \) and \(s\gg 0 \) such that \(\ls:=   \det f_*(\mu mK_{X/Y})^{\otimes a}\otimes \det f_*(mK_{X/Y})^{\otimes b} \) is ample over \(Y_0\).  In other words, \(\mathbf{B}_+(\ls)\subset {\rm Supp}(B) \). By the definition of   augmented base locus,  one can even arrange \(a, b\gg 0 \) such that  there exists a singular hermitian metric \( h_1\) of \(\ls-4A_Y \)  which is smooth over \(Y_0\), and the curvature current \(\sqrt{-1}\Theta_{h_\ls}(\ls)\geqslant \omega \) for some K\"ahler form \(\omega \) in \(Y\).   
		Denote by \(r_1:=\rank f_*(\mu mK_{X/Y})\)	and  \(r_2:=\rank f_*(mK_{X/Y})\). It follows from \cref{CP} that  for any sections
		\[
		\sigma_1 \in H^0(X_y,  a\mu mC K_{X_y} ), \quad \sigma_2 \in H^0(X_y, b mC K_{X_y} ),
		\]
		there exists  effective divisors  \(\Sigma_1 \) and \(\Sigma_2 \) such that
		\begin{eqnarray*}
			\Sigma_1+a f^*\det f_*(m\mu K_{X/Y})- f^*A_Y \stackrel{{\rm linear}}{\sim} am\mu r_1CK_{X/Y}+ P_1+F_1 \\
			\Sigma_2+b f^*\det f_*(m K_{X/Y})- f^*A_Y \stackrel{{\rm linear}}{\sim} bmr_2C K_{X/Y}+ P_2+F_2 
		\end{eqnarray*}
		and 
		\[
		\Sigma_{1\upharpoonright X_y}=\sigma_1^{\otimes r_1}, \quad \Sigma_{2\upharpoonright X_y}=\sigma_2^{\otimes r_2}.
		\]
		Here $F_i$ is $f$-exceptional with \(f(F_i)\subset {\rm Supp}(B)\), \( {\rm Supp}(P_i)\subset {\rm Supp}(\Delta_f)\)  for \(i=1,2\).

		Write \(N:=am\mu r_1C+bmr_2C \), \(P:=P_1+P_2\) and \(F:= F_1+F_2\). Fix any $y\in Y_0$.  Then  the effective divisor \(\Sigma_1 +\Sigma_2 \) induces a singular hermitian metric  
		\(h_2\)  for the line bundle \(L_2:=NK_{X/Y}-f^*\ls+2f^*A_Y+P+F\) such that \(h|_{X_y} \) is not identically equal to \(+\infty \), and so is the singular hermitian metric \(h:=f^*h_1\cdot h_2 \) over \(L_0:=L_2+f^*\ls-4f^*A_Y=NK_{X/Y} -2f^*A_Y+P+F \).  
		In particular, when \(\ell \) sufficiently large,  the multiplier ideal sheaf  \( \js (h_{\upharpoonright X_y}^{\frac{1}{\ell}})=\oc_{X_y}\). By Siu's invariance of plurigenera, all the global sections \(H^0\big(X_y,(\ell K_X+L_0)_{\upharpoonright X_y}\big)\simeq H^0\big(X_y, (\ell+N)K_{X_y}\big) \) extends locally, and we thus can apply \cref{extension} to obtain the desired surjectivity 
		\begin{align}\label{surjectivity}
		H^0  \big(X, \ell K_{X/Y} + L_0 -\ell\Delta_f +f^* A_Y\big) \twoheadrightarrow  H^0 \big(X_y ,  (\ell+N) K_{X_y}  \big), 
		\end{align}
		Recall that \({\rm Supp}(P)\subset {\rm Supp}(\Delta_f)\). Then $\ell f^*B\geqslant P$ for $\ell\gg 0$, and
		one has the inclusion of sheaves 
		\[
		\ell K_{X/Y} + L_0 -\ell \Delta_f +f^* A_Y\hookrightarrow (N+\ell)   K_{X/Y}   -f^* A_Y +F.        
		\] 
		which is an isomorphism over $X_0$  . 	By \eqref{surjectivity} this implies that  the direct image sheaves \( f_*(\ell K_{X/Y}  -f^* A_Y+F)\) are globally generated over some Zariski open set \(U_y\subset Y_0\) containing \(y\) for $\ell \gg 0$.  
		Since \(y\) is an arbitrary point in \(Y_0\), the direct image  \( f_*(\ell K_{X/Y}+F ) \otimes A_Y^{-1}\) is globally generated   over \(Y_0\) for $\ell \gg 0$  by noetherianity.    Recall that $F$ is $f$-exceptional with $f(F)\subset {\rm Supp}(B)$. Then there is an injection
		\[
		f_*(\ell K_{X/Y}+F ) \otimes A_Y^{-1}\hookrightarrow  f_*(\ell K_{X/Y})^{\star\star} \otimes A_Y^{-1}
		\]  
		which is an isomorphism over $Y_0$. Hence \( f_*(\ell K_{X/Y} )^{\star\star} \otimes A_Y^{-1}\) is also globally generated over  \(Y_0\).  
	Hence \(f_*(\ell K_{X/Y} ) \) is ample with respect to \(Y_0\) for $\ell\gg 0$. 	The first claim follows.
		
		\item The trick to prove the second claim has already appeared  in \cite{Den17} in proving a conjecture by Demailly-Peternell-Schneider. We first recall that \(f_*(\ell K_{X/Y} ) \) is locally free outside a codimension 2 analytic subset of $Y$. 	By the proof of \cref{global generated},  for \(\ell\) sufficiently large and divisible,  \(f_*(\ell K_{X/Y} +F)\otimes A_Y^{-1}\) is  locally free and  generated by global sections over \(Y_0\), where $F$ is some $f$-exceptional effective divisor. Therefore,   its determinant \(\det f_*(\ell K_{X/Y}+F )\otimes A_Y^{-r_\ell}\) is also globally generated  over \(Y_0\), where \(r_\ell:=\rank  f_*(\ell K_{X/Y}) \).   Since $F$ is $f$-exceptional and effective, one has
		$$
		\det f_*(\ell K_{X/Y}+F )\otimes A_Y^{-r_\ell}=\det f_*(\ell K_{X/Y} )\otimes A_Y^{-r_\ell},
		$$
		and therefore, \(\det f_*(\ell K_{X/Y})\otimes A_Y^{-r_\ell}\) is also globally generated  over \(Y_0\).	By the very definition of the augmented base locus $\mathbf{B}_+(\bullet)$ we conclude that
		\[\mathbf{B}_+\big(\det  f_*(\ell K_{X/Y})\big)\subset {\rm Supp}(B) .\] 
		The second claim is proved.

		%
		
		\item   	 
		We combine the ideas in \cite[Proposition 4.1]{VZ03} as well as   the pluricanonical extension techniques in  \cref{extension} to prove the result.  
		By \cref{def: good compactification},   there exists a  smooth projective compactification $Y$ of $Y_0$ with  $B:=Y\setminus Y_0$ simple normal crossing,  a    non-singular finite covering $\psi:W\to Y$, and an slc family $g':Z'\to W$,  which extends the family $X_0\times_{Y_0}W$. 
		By \cref{fiber kollar}  for any $r\in \mathbb{Z}_{>0}$,  the $r$-fold fiber  product $g'^r:Z'^r \to W$ is still an slc family,  which compactifies the smooth family $X_0^r\times_{Y_0}W\to W_0$, where $W_0:=\psi^{-1}(Y_0)$.  Note that $Z'^r$ has canonical singularities. 
		
		Take a smooth projective compactification $f:{X}\to Y$ of $X_0^r\to {Y_0}$ so that $f^*B$ is normal crossing. Let  ${Z}\to Z'^r$ be a strong desingularization of $Z'^r$, which also resolves this birational map $Z'^r\dashrightarrow (X\times_{Y} W)^{\tsim}$.   Then $ g:{Z}\to W$ is smooth over $W_0:=\psi^{-1}(Y_0)$.    
		\begin{equation*}
		\begin{tikzcd}
		{Z} \arrow[r]\arrow[rrr, bend left=20]\arrow[d, " {g}"]  &	Z'^r \arrow[r,dashrightarrow]   \arrow[d, "g'^r"]  &    X \arrow[d, "f"] &  (X\times_{Y} W)^{\tsim} \arrow[l]\arrow[d]\\
		W   \arrow[r, equal]  &	W  \arrow[r, "\psi"]     & Y &  W \arrow[l, "\psi"']   
		\end{tikzcd}
		\end{equation*}

		Let  $\tilde{Z}$ be a strong desingularization of $Z'$, which is thus smooth over $W_0:=\psi^{-1}(Y_0)$.      
		For the new family $\tilde{g}:\tilde{Z}\to W$, we denote by     $\tilde{Z}_0:=\tilde{g}^{-1}(W_0)$.  Then $\tilde{Z}_0\to W_0$ is also a smooth family, and any fiber of $Z_w$ with $w\in W_0$ is a projective manifold of general type.   By our assumption in the theorem,  for any \(w\in W_0\),    the set of \(w'\in W_0 \) with \( \tilde{Z}_{w'} \stackrel{{\rm bir}}{\sim} \tilde{Z}_{w} \) is finite as $\psi:W\to Y$ is a finite morphism.  We thus can apply \cref{global generated,enough ample} to our new family $\tilde{g}:\tilde{Z}\to W$. 

		From now on, we will always assume that $\ell\gg 0$ is sufficiently divisible so that $\ell K_{Z'}$ is Cartier.   
		Let $A_Y$ be a sufficiently ample line bundle over $Y$, so that $A_W:=\psi^*A_Y$ is also \emph{sufficiently  ample}. Since $Z'$ has canonical singularity,   $\tilde{g}_*(\ell K_{\tilde{Z}/W})=g'_*(\ell K_{Z'/W})$. It follows from  \cref{enough ample} that,  for any $\ell \gg 0$,  
		the line bundle  
		\begin{eqnarray} \label{det ample}
		\det  \tilde{g}_*(\ell K_{\tilde{Z}/W} )\otimes A_W^{-r}=\det g'_*(\ell K_{Z'/W})\otimes A_W^{-r}
		\end{eqnarray}
		is   globally generated over $W_0$, where $r:=\rank\   g'_*(\ell K_{Z'/W})$ depending  on $\ell$. Then  there exists a positively-curved singular hermitian metric $h_{\det}$ on the line bundle $\det  g'_*(\ell K_{Z'/W})\otimes A_W^{-r} $ such that $h_{\det}$ is smooth over $W_0$. 
		
		By the base change properties of slc families (see    \cite[Proposition 2.12]{BHPS13} and \cite[Lemma 2.6]{KP17}), one has
		\begin{align*}
		\omega^{[\ell]}_{Z'^r/W} \simeq \bigotimes_{i=1}^r{\rm pr}_i^*\omega^{[\ell]}_{Z'/W},\quad 
		g'^r_*(\ell K_{Z'^r/W}) \simeq \bigotimes^r g'_*(\ell K_{Z'/W}), 
		\end{align*}
		where ${\rm pr}_i:Z'^r\to Z'$ is the $i$-th directional projection map. Hence $\ell K_{Z'^r}$ is Cartier as well, and  we have 
		$$
		\bigotimes^r g'_*(\ell K_{Z'/W})\simeq g'^r_*(\ell K_{Z'^r/W})=  g_*(\ell K_{Z/W}).
		$$
		By \cref{KSBA fiber}, $g_* (\ell K_{Z /W})$ is reflexive, and we thus have
		\begin{align*} 
		\det g'_*(\ell K_{Z'/W}) \to  \bigotimes^r g'_*(\ell K_{Z'/W})   \simeq g_* (\ell K_{Z/W}),
		\end{align*}
		which induces a natural effective divisor 
		\begin{align*} 
		\Gamma   \in  \lvert \ell K_{Z/W}  - g^* \det g'_*(\ell K_{Z'/W}) \rvert
		\end{align*}  
		such that $\Gamma_{ \upharpoonright Z_{w}}\neq 0$ for any  (smooth) fiber $Z_w$ with  \(w\in W_0\).   	By \cref{control}   for any $w\in W_0$ the log canonical threshold
		\begin{align}\label{eq:lct control}
		c(\Gamma_{\upharpoonright Z_w})\geqslant \frac{1}{e(\ell K_{Z_w} )}=\frac{1}{e\big(  \bigotimes_{i=1}^{r}{\rm pr}^*_i(K_{Z'_w}^{\otimes \ell} )\big)}=\frac{1}{e\big(  \ell K_{Z'_w} \big)}\geqslant \frac{1}{\ell\cdot c_1(\as)^{d-1}\cdot c_1(K_{Z'_w})+1}\geqslant \frac{2 }{(C-1)\ell}
		\end{align}
		for some  \(C\in \mathbb{N} \) which does not depend on \(\ell\) and $w\in W_0$.     Denote by  $h$ the singular hermitian metric on 
		\[\ell K_{Z/W}  - g^* \det g'_*(\ell K_{Z'/W})  \] induced by $\Gamma $. By the definition of log canonical threshold, the multiplier ideal sheaf  $\js\big(h^{\frac{1}{(C-1)\ell}}_{ \upharpoonright Z_w} \big)=\oc_{Z_w}$ for  any  fiber $Z_w$ with  \(w\in W_0\).   	Let us define a positively-curved singular metric   $h_{	\mathscr{F}}$ for    the line bundle
		$  
		\mathscr{F}:= \ell   K_{Z/W}   - r g^* A_W 
		$
		by setting $h_{	\mathscr{F}}:=h\cdot g^*h_{\det}$.   Then  $\js\big(h^{\frac{1}{(C-1)\ell}}_{\fs \upharpoonright Z_w} \big)=\oc_{Z_w}$ for any $w\in W_0$.

		For any $n\in \mathbb{N}^*$, applying \cref{extension} to $n  \fs$ we obtain the surjectivity
		\begin{align}\label{3:direct image}
		H^0\big(Z, (C-1)  n\ell K_{Z/W}   +n \fs+ g^*  A_W\big)\twoheadrightarrow  H^0\big(Z_w,   Cn\ell  K_{Z_w}   \big)
		\end{align}
		for all $w\in W_0$. In other words,
		\begin{eqnarray*} 
			g_*\big(C\ell n K_{Z/W}    )\otimes   A_W^{-(n  r-1)} 
		\end{eqnarray*}
		is globally generated over $W_0$ for any $\ell \gg 0$ and any $n\geqslant 1$.  
		
		Since \(K_{X_y}\) is big, one thus has
		\[
		r=r_\ell \sim \ell^d\ \  {\rm as\ } \ \ell \to +\infty
		\]
		where \(d:=\dim Z_w\geqslant 2 \) (if the fibers of $f$ are curves, one can take a fiber product to replace the original family).   Recall that $C$ is a constant which does not depend on $\ell$.  One thus can take  an \emph{a priori}  $\ell\gg 0 $ so that  $r\gg C\ell$. 	   In conclusion,  for sufficiently large and divisible $m$, 
		\begin{eqnarray*} 
			g_*\big(m K_{Z/W}    )\otimes   A_W^{-2m}=	g_*\big(m K_{Z/W}    )\otimes   \psi^*A_Y^{-2m} 
		\end{eqnarray*}
		is globally generated over $W_0$.   
		Therefore, we have a    morphism
		\begin{align}\label{eq: global}
		\bigoplus_{i=1}^{N}\psi^*A_Y^{m}\to  g_*\big(m K_{Z/W} \big)\otimes   \psi^*A_Y^{-m},
		\end{align}
		which is surjective over $W_0$. On the other hand, by \cite[Lemma 2.5.b]{Vie90}, one has the inclusion
		$$
		g_*\big(m K_{Z/W} \big)     \hookrightarrow   \psi^*f_*(mK_{X/Y}), 
		$$
		which is an isomorphism over $W_0$. 
		\eqref{eq: global} thus induces a morphism 
		\begin{align} \label{eq:surjective}
		\bigoplus_{i=1}^{N}\psi_*\oc_W\otimes A_Y^{m}\to 	  \psi_*g_*\big(m K_{Z/W} \big)\otimes    A_Y^{-m}\to   
		\psi_* \psi^*\big(f_*(mK_{X/Y})\big)\otimes   A_Y^{-m},
		\end{align}
		which is surjective over $Y_0$. Note that 
		that even if $f_*(mK_{X/Y})$  is merely a coherent sheaf, the projection formula $\psi_* \psi^*\big(f_*(mK_{X/Y})\big)=f_*(mK_{X/Y})\big)\otimes    \psi_*\oc_W$
		still holds  for  $\psi$ is finite  (see   \cite[Lemma 5.7]{Ara04}).  
		The trace map
		$$
		\psi_*\oc_W\rightarrow \oc_Y
		$$
		splits the natural inclusion $\oc_Y\to \psi_*\oc_W$, and is thus surjective. Hence \eqref{eq:surjective} gives rise to a morphism \begin{align} \label{eq:surjective2}
		\bigoplus_{i=1}^{N}\psi_*\oc_W\otimes A_Y^{m}\to  \psi_*g_*\big(m K_{Z/W} \big)\otimes    A_Y^{-m}\xrightarrow{\Phi}   	 f_*(mK_{X/Y}) \otimes   A_Y^{-m},
		\end{align}
		which is surjective over $Y_0$. 
		By taking $m$ sufficiently large, we may assume that $\psi_*\oc_W\otimes A_Y^{m}$ is   generated by its global sections. 
		Then $f_*(mK_{X/Y}) \otimes A_Y^{-m}$ is globally generated over $Y_0$.   We complete the proof.\qedhere 
	\end{enumerate}
\end{proof} 
Let us prove  the following Bertini-type result, which will be used in  the proof of \cref{cor:main hyper}.
\begin{lem}[A Bertini-type result]\label{Bertini}
Let  $f:X\to Y$  be   the projective family in  \cref{desired global}. Then for any given smooth fiber $X_y$ with $y\in Y_0$,  there is $H\in |\ell K_{X/Y}-\ell f^*A_Y|$ so that $H_{\upharpoonright X_y}$ is smooth. In particular, there is a Zariski open neighborhood $V_0\supset y$ so that $H$ is smooth over $V_0$.  
\end{lem}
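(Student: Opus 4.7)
The plan is to leverage the global generation statement in \cref{desired global} to reduce the problem to a Bertini argument on the single smooth fiber $X_y$. Since $f$ compactifies the smooth morphism $X_0^r \to Y_0$, $f$ is smooth over $Y_0$; and by the hypotheses of \cref{main} each fiber $X_{0,y}$ of the original family is a minimal projective manifold of general type, so $K_{X_{0,y}}$ is big and nef, hence so is $K_{X_y} = \sum_i \mathrm{pr}_i^* K_{X_{0,y}}$ on the product $X_y \simeq X_{0,y}^r$.

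First, I would translate global generation of $f_*(\ell K_{X/Y}) \otimes A_Y^{-\ell}$ over $Y_0$ into a statement on fibers. For $\ell \geq 2$, Kodaira vanishing on $X_y$ yields $H^i(X_y, \ell K_{X_y}) = 0$ for $i > 0$, so by cohomology and base change $f_*(\ell K_{X/Y})$ is locally free over $Y_0$ with fiber at $y$ canonically isomorphic to $H^0(X_y, \ell K_{X_y})$. After fixing a trivialization of $A_Y$ at $y$, the global generation of $f_*(\ell K_{X/Y}) \otimes A_Y^{-\ell}$ translates into surjectivity of the restriction map
\[
H^0\bigl(X,\ \ell K_{X/Y} - \ell f^* A_Y\bigr) \twoheadrightarrow H^0\bigl(X_y,\ \ell K_{X_y}\bigr).
\]

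Next, I would apply Bertini on $X_y$. By the Kawamata--Shokurov base-point-free theorem, $\ell K_{X_y}$ is base-point-free for $\ell$ sufficiently large and divisible. On a smooth projective variety in characteristic zero, any base-point-free linear system admits a smooth general member: the universal incidence variety $\mathcal{D} := \{(x, D) \in X_y \times \lvert \ell K_{X_y} \rvert : x \in D\}$ is a projective space bundle over $X_y$, hence smooth, and generic smoothness applied to the second projection $\mathcal{D} \to \lvert \ell K_{X_y} \rvert$ produces a smooth general member $H_y$.

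Lifting $H_y$ via the surjection in the first step, I obtain a section $H \in \lvert \ell K_{X/Y} - \ell f^* A_Y \rvert$ with $H_{\upharpoonright X_y} = H_y$ smooth. Since $H_y$ is a proper divisor in $X_y$, the hypersurface $H$ does not contain the fiber $X_y$; combined with the flatness (in fact smoothness) of $f$ over $Y_0$, this forces $f|_H \colon H \to Y$ to be flat in a neighborhood of $y$. Smoothness being an open condition on the source and $H \cap X_y = H_y$ being smooth, the morphism $f|_H$ is smooth over some Zariski open neighborhood $V_0 \ni y$, as required. No single step is deep; the only care needed is to choose $\ell$ large and divisible enough to simultaneously ensure the global generation of \cref{desired global}, Kodaira vanishing on $X_y$, and base-point-freeness of $\ell K_{X_y}$ --- each of these is an eventually true condition in $\ell$, so they can be arranged together.
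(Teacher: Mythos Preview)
Your argument is correct and follows essentially the same three-step structure as the paper's proof: establish surjectivity of the restriction map to $H^0(X_y,\ell K_{X_y})$, apply Bertini on the fiber using base-point-freeness of $\ell K_{X_y}$, and lift. One small slip: since $K_{X_y}$ is only big and nef (not ample), the vanishing $H^i(X_y,\ell K_{X_y})=0$ for $i>0$ requires Kawamata--Viehweg, not Kodaira, vanishing; alternatively, the paper bypasses vanishing entirely by invoking Siu's invariance of plurigenera together with Grauert's base-change criterion to obtain local freeness and the fiberwise isomorphism directly.
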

\begin{proof}
By Siu's invariance of plurigenera and Grauert-Grothedieck's  \enquote*{cohomology and base change}, we know that 
$f_*(\ell K_{X/Y})\otimes A_Y^{-\ell}$ is locally free on $Y_0$, and the natural map
$$
(f_*(\ell K_{X/Y})\otimes A_Y^{-\ell})_y\to H^0(X_y,\ell K_{X_y})
$$ 
is an isomorphism for any $y\in Y_0$. Since $K_{X_y}$ is assumed to be semi-ample, one can take $\ell\gg 0$ so that $|\ell K_{X_y}|$ is base point free. By the Bertini theorem, one can take a section $s\in H^0(X_y,\ell K_{X_y})$ whose zero locus is a smooth hypersurface on $X_y$. By \cref{desired global}, one has the surjection
$$
H^0\big(Y,f_*(\ell K_{X/Y})\otimes A_Y^{-\ell}\big)\twoheadrightarrow \big(f_*(\ell K_{X/Y})\otimes A_Y^{-\ell}\big)_y\xrightarrow{\simeq} H^0(X_y,\ell K_{X_y}).
$$
Hence there is 
$$\sigma\in H^0(X,\ell K_{X/Y}-\ell f^*A_Y)=H^0\big(Y,f_*(\ell K_{X/Y})\otimes A_Y^{-\ell}\big)$$
which extends the section $s$. In other words, for the zero divisor $H=(\sigma=0)$,  its restriction to $X_y$ is smooth. Hence  there is a Zariski open neighborhood $V_0\supset y$ so that $H$ is smooth over $V_0$.  The lemma is proved.
\end{proof} 
\begin{rem}
Note that  we do not know how to find a hypersurface $H\in |\ell K_{X/Y}-\ell f^*A_Y|$ so that its discriminant locus in $Y$ is normal crossing. We have to blow-up the base $\nu:Y'\to Y$ to achieve this. As $f:X\to Y$ is not flat in general, for the new family 
$$
\begin{tikzcd}
X'\arrow[r]\arrow[d,"f'"]&(X\times_YY')^{\tsim}\arrow[d]\arrow[r]&X\times_YY'\arrow[r]\arrow[d] &X\arrow[d, "f"]\\
Y'\arrow[r, equal] &Y'\arrow[r, equal] &Y'\arrow[r, "\nu"]& Y
\end{tikzcd}
$$
where $X'$ is a desingularization of $(X\times_YY')^{\tsim}$, in general  
$$
\nu^*f_*(\ell K_{X/Y})\not\subset f'_*(\ell K_{X'/Y'}).
$$
In other words,  although the discriminant locus of $\nu^*H\to Y'$ is a simple normal crossing divisor in $Y'$,  $\nu^*H$ might not lie at $|\ell K_{X'/Y'}-\ell f'^*\nu^*A_Y|$. We will overcome this problem in \cref{cor:main hyper} at the cost of the appearance of some $f'$-exceptional divisors. 
\end{rem}

Since the \emph{$\mathbb{Q}$-mild reduction} in \cref{def: good compactification} holds for any smooth surjective projective morphism with connected fibers and smooth base, it follows from our proof in \cref{desired global}  and Kawamata's theorem \cite{Kaw85},  one still has the \emph{generic} global generation as follows.
\begin{thm}\label{thm:Kawamata}
	Let $f_U:U\to V$ be a smooth projective morphism  between quasi-projective varieties  with connected fibers. Assume that the general fiber $F$ of $f_U$  has semi-ample canonical bundle, and $f_U$ is of maximal variation. Then there exists a positive integer $r\gg 0$ and a smooth projective compactification  $f:{X}\to Y$ of $U^r\to V$ so that ${f}_*(m K_{{X}/Y})\otimes \as^{-m}$ is globally generated over some Zariski open subset of $V$. Here $U^r\to V$ is the $r$-fold fiber product of $U\to V$, and $\as$ is some ample line bundle on $Y$. \qed
\end{thm}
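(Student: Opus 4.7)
The plan is to rerun the proof of \cref{desired global} almost verbatim, replacing the full ampleness of $f_*(\ell K_{X/Y})$ over $Y_0$ provided by \cref{enough ample} (which required fibers of general type) with a weaker bigness statement that is available in the semi-ample case of maximal variation thanks to Kawamata \cite{Kaw85}. First I would apply the $\mathbb{Q}$-mild reduction from \Cref{appendix} to $f_U\colon U\to V$: this yields a smooth projective compactification $Y\supset V$, a finite cover $\psi\colon W\to Y$ with $W$ smooth, and an slc family $g'\colon Z'\to W$ with canonical singularities on the total space that extends the base change $U\times_V\psi^{-1}(V)\to\psi^{-1}(V)$. I would then pass to the $r$-fold fiber product $g'^r\colon Z'^r\to W$, which by \cref{fiber kollar} is again slc.

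Next, I would invoke Kawamata's positivity theorem \cite{Kaw85} for families with semi-ample canonical fibers: because $f_U$ has maximal variation, after fixing a sufficiently large $r$ the determinant $\det g'^r_*(\ell K_{Z'^r/W})$ is big on $W$ for every sufficiently large and divisible $\ell$. Equivalently, for a sufficiently ample line bundle $A_Y$ on $Y$ with $A_W:=\psi^*A_Y$, the sheaf $\det g'^r_*(\ell K_{Z'^r/W})\otimes A_W^{-r'}$ (with $r':=\rank g'^r_*(\ell K_{Z'^r/W})$) carries a positively curved singular hermitian metric $h_{\det}$ that is smooth on a nonempty Zariski open subset $W_0\subset\psi^{-1}(V)$.

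Third, I would follow the remaining steps of \cref{desired global} unchanged. Fix a smooth compactification $f\colon X\to Y$ of $U^r\to V$ together with a strong desingularization $Z\to Z'^r$ that resolves the birational map to $(X\times_Y W)^{\tsim}$, and let $g\colon Z\to W$ be the induced morphism. The natural map $\det g'^r_*(\ell K_{Z'^r/W})\hookrightarrow g_*(\ell K_{Z/W})$ produces an effective divisor $\Gamma\in|\ell K_{Z/W}-g^*\det g'^r_*(\ell K_{Z'^r/W})|$, and \cref{control} gives the uniform log canonical threshold estimate $c(\Gamma_{\upharpoonright Z_w})\geq 2/((C-1)\ell)$ on fibers over $W_0$. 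Combining the singular metric coming from $\Gamma$ with $g^*h_{\det}$ gives a positively curved metric on $\fs:=\ell K_{Z/W}-r'g^*A_W$ whose $1/((C-1)\ell)$-multiplier ideal restricts trivially to every fiber over $W_0$. Applying the pluricanonical $L^2$-extension theorem \cref{extension} to $n\fs$ yields, for $n$ large, global generation of $g_*(Cn\ell K_{Z/W})\otimes A_W^{-(nr'-1)}$ over $W_0$; choosing $\ell$ with $r'\gg C\ell$ (possible since $r'$ grows like the Hilbert polynomial of the Iitaka model in $\ell$ under maximal variation) gives global generation of $g_*(mK_{Z/W})\otimes A_W^{-2m}$ over $W_0$ for $m$ sufficiently large and divisible. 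Finally, the inclusion $g_*(mK_{Z/W})\hookrightarrow\psi^*f_*(mK_{X/Y})$ (an isomorphism on $W_0$) combined with the trace splitting $\psi_*\oc_W\twoheadrightarrow\oc_Y$ and the projection formula for the finite morphism $\psi$ descends this to global generation of $f_*(mK_{X/Y})\otimes A_Y^{-m}$ over the Zariski open subset $\psi(W_0)\subset V$.

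The principal obstacle, and the reason the conclusion is weaker than \cref{desired global}, is precisely the replacement of ampleness over all of $Y_0$ by bigness on a Zariski open subset: Kawamata's result does not provide a Zariski locally-free quotient metric on the full $\psi^{-1}(V)$, only on an open $W_0$ where the bigness becomes effective, and this propagates through the extension argument to yield global generation only over some Zariski open subset of $V$. This loss is nevertheless harmless for the applications in the paper, where any such nonempty open set is sufficient to construct the required Viehweg--Zuo Higgs bundle and associated Finsler metric.
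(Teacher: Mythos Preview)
Your overall strategy matches the paper's exactly: rerun the proof of \cref{desired global}, replacing the input from \cref{enough ample} by Kawamata's bigness theorem, and observe that the resulting singular metric $h_{\det}$ is smooth only on a proper Zariski open subset rather than on all of $\psi^{-1}(V)$, which is precisely why the conclusion is weakened to ``some Zariski open subset of $V$''. The descent via the trace splitting and the projection formula is also handled correctly.

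There is, however, a genuine gap in how you handle the fiber-product index $r$. In the paper's argument, Kawamata's theorem is applied to the \emph{single} family $\tilde{Z}\to W$ (the desingularization of $Z'$), yielding bigness of $\det g'_*(\ell K_{Z'/W})$; no preliminary fiber product is needed for this. The integer $r$ is then \emph{defined} to be $\rank g'_*(\ell K_{Z'/W})$, precisely so that the antisymmetrization map
\[
\det g'_*(\ell K_{Z'/W})\ \hookrightarrow\ \bigotimes^{r} g'_*(\ell K_{Z'/W})\ \simeq\ g'^r_*(\ell K_{Z'^r/W})\ \simeq\ g_*(\ell K_{Z/W})
\]
exists and produces the divisor $\Gamma$. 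In your write-up you treat $r$ as a free large parameter, apply Kawamata to $Z'^r$, and then assert a ``natural map'' $\det g'^r_*(\ell K_{Z'^r/W})\hookrightarrow g_*(\ell K_{Z/W})$. No such map exists: here $g_*(\ell K_{Z/W})\simeq g'^r_*(\ell K_{Z'^r/W})$ has rank $r'>1$, and there is no canonical inclusion of its determinant into itself. The fix is either to set $r:=\rank g'_*(\ell K_{Z'/W})$ and apply Kawamata to $Z'$ (the paper's route), or to pass to a further $r'$-fold fiber product $(Z'^r)^{r'}$ so that $\det g'^r_*$ maps into $\bigotimes^{r'} g'^r_*$; in the latter case $X$ must compactify $U^{r\cdot r'}$ rather than $U^r$.
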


\subsection{Sufficiently many  \enquote*{moving}  hypersurfaces}\label{sec:good}
As we have seen in \cref{sec:existence} on the construction of VZ Higgs bundles,  one has to apply branch cover trick to construct a negatively twisted Hodge bundle on the compactification of the base, which is well-defined outside a simple normal crossing divisor. This means that the hypersurface $H\in |\ell K_{X/Y}-\ell f^*A_Y|$  in constructing the cyclic cover is smooth over the complement of an SNC divisor of the base.  As we discussed in \cref{Bertini}, in general we cannot perform a simple blow-up of the base to achieve this. 
In this subsection we will overcome this difficulty in applying the methods in  \cite[Proposition 4.4]{PTW18}. It will be our basic setup in constructing refined VZ Higgs bundles in \cref{construction}.  

\begin{thm}\label{cor:main hyper}
	Let $X_0\to Y_0$ be a smooth family of   minimal projective manifolds of
	general type over a quasi-projective manifold $Y_0$. Suppose that  for any \(y\in Y_0\),  the set of \(z\in Y_0\) with \( X_z \stackrel{{\rm bir}}{\sim} X_y \) is finite.  Let $Y\supset Y_0$ be the smooth compactification   in \cref{def: good compactification}.      Fix  any  $y_0\in Y_0$ and some sufficiently ample line bundle $A_Y$ on $Y$.   Then there exist a   birational morphism $\nu:Y'\to Y$  and a new algebraic fiber space $f':X'\to Y'$ which is smooth over  $\nu^{-1}(Y_0)$, 
	so that  for any sufficiently large and divisible  $\ell$, one can find a hypersurface 
	\begin{align}\label{eq:same hypersurface}
	H\in |\ell K_{X'/Y'}-\ell (\nu\circ f')^*A_Y+\ell E|
	\end{align}
	satisfying that
	\begin{itemize}[leftmargin=0.6cm] 
		\item the divisor $D:=\nu^{-1}(Y\setminus Y_0)$ is   simple normal crossing. 
		\item There exists a reduced divisor $S$ in $ Y'$, so that $D+S$ is simple normal crossing,    
		and    $H\to Y'$ is smooth over $Y'\setminus D\cup S$. 
		\item   The exceptional locus ${\rm Ex}(\nu)\subset {\rm Supp}(D+S)$, and   $y_0\notin\nu (D\cup S )$. 
		\item The divisor $E$ is effective and  $f'$-exceptional with $f'(E)\subset  {\rm Supp}(D+S)$.
	\end{itemize} 
	Moreover, when $X_0\to Y_0$ is \emph{effectively parametrized} over some open set containing $y_0$, so is the new family $X'\to Y'$.  
\end{thm}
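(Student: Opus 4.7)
The plan is to combine \cref{desired global} with \cref{Bertini} to produce, on a suitable compactification $f:X\to Y$ of $X_0^r\to Y_0$ (for some $r\gg 0$), a hypersurface $H_0\in|\ell K_{X/Y}-\ell f^*A_Y|$ whose restriction to $X_{y_0}$ is smooth, then to resolve its discriminant by a log blow-up of $Y$ that is an isomorphism near $y_0$, and finally to transport $H_0$ to the pulled-back family via a Viehweg-type base-change comparison. First, \cref{desired global} provides such an $f:X\to Y$ with $f_*(\ell K_{X/Y})\otimes A_Y^{-\ell}$ globally generated over $Y_0$ for $\ell\gg 0$ sufficiently divisible. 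Since each fiber $X_y$ with $y\in Y_0$ is minimal of general type, $K_{X_y}$ is semi-ample by the Base-Point-Free Theorem, so \cref{Bertini} furnishes $H_0$ smooth over a Zariski open neighborhood $V_0\subset Y_0$ of $y_0$. Let $\Sigma\subset Y$ denote the Zariski closure of the locus in $Y_0$ over which $H_0\to Y$ fails to be smooth; then $y_0\notin \Sigma\cup(Y\setminus Y_0)$.

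Next I would take a log resolution $\nu:Y'\to Y$ of $(Y,\Sigma\cup(Y\setminus Y_0))$ with centers supported in this locus, so that $\nu$ is an isomorphism over $V_0$. Setting $D:=\nu^{-1}(Y\setminus Y_0)$ and choosing a reduced divisor $S$ on $Y'$ with $D+S$ simple normal crossing, $\operatorname{Supp}(D+S)\supset \nu^{-1}(\Sigma)\cup\operatorname{Ex}(\nu)$, and $y_0\notin\nu(D+S)$, I let $X'$ be a strong desingularization of the main component $(X\times_Y Y')^{\tsim}$, chosen to be an isomorphism over the smooth locus of the fiber product. The induced $f':X'\to Y'$ is then smooth over $\nu^{-1}(Y_0)$, and the birational morphism $\pi:X'\to X$ is an isomorphism above $f^{-1}(V_0)$.

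The heart of the argument is the construction of $H$. By the standard Viehweg-type comparison of relative dualizing sheaves under birational base change, in the spirit of \cite[Lemma~2.5]{Vie90} and the Popa--Taji--Wu construction in \cite[Proposition~4.4]{PTW18}, one can arrange (after possibly a further $\pi$-exceptional blow-up of $X'$) an effective $f'$-exceptional divisor $E$ on $X'$ with $f'(E)\subset\operatorname{Supp}(D+S)$ together with a sheaf injection
\[
\nu^*f_*(\ell K_{X/Y})\;\hookrightarrow\;f'_*\bigl(\ell K_{X'/Y'}+\ell E\bigr)
\]
which is an isomorphism over $\nu^{-1}(Y_0)$. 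Applying this injection (twisted by $\nu^*A_Y^{-\ell}$) to the global section of $f_*(\ell K_{X/Y})\otimes A_Y^{-\ell}$ corresponding to $H_0$ yields, via the projection formula, a global section of $\ell K_{X'/Y'}+\ell E-\ell(\nu\circ f')^*A_Y$ on $X'$, hence an effective divisor
\[
H\in\bigl|\ell K_{X'/Y'}-\ell(\nu\circ f')^*A_Y+\ell E\bigr|
\]
whose restriction to $f'^{-1}(V_0)$ agrees with $H_0$ under the isomorphism $\pi:f'^{-1}(V_0)\xrightarrow{\sim}f^{-1}(V_0)$. Smoothness of $H$ over $Y'\setminus(D+S)$ follows from the smoothness of $H_0$ over $V_0$ together with the fact that $\pi,\nu$ are isomorphisms there and $E$ is supported over $D+S$; any residual non-smoothness outside $\nu^{-1}(V_0)$ can be absorbed into an enlargement of $S$.

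For the final assertion, the Kodaira--Spencer map of $X_0^r\to Y_0$ at any $y\in Y_0$ factors as the diagonal composition $\rho_y^{\oplus r}$ with the Künneth decomposition of $H^1(X_{0,y}^r,\ts_{X_{0,y}^r})$, so its injectivity at $y_0$ follows from that of $\rho_{y_0}$; since $\pi$ and $\nu$ are isomorphisms in a neighborhood of $y_0$, this persists for $f':X'\to Y'$ at $\nu^{-1}(y_0)$. The main obstacle in the above scheme is the precise existence of the divisor $E$: it requires careful accounting of how the $\pi$-exceptional components of $K_{X'/X}$ that are $f'$-horizontal over $\nu$-exceptional divisors of $Y'$ cancel against the pullback $f'^*K_{Y'/Y}$, which is the technical core of the Viehweg--Zuo / Popa--Taji--Wu base-change machinery.
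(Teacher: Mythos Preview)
Your overall architecture matches the paper's: produce $H_0$ on a compactification $f:X\to Y$ of $X_0^r\to Y_0$ via \cref{desired global} and \cref{Bertini}, resolve its discriminant by a log blow-up $\nu:Y'\to Y$ that is an isomorphism near $y_0$, desingularize the fiber product to obtain $f':X'\to Y'$, and then transport $H_0$. The final assertion on effective parametrization is also fine.

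The genuine gap is exactly where you locate it, and it is not filled by the argument you sketch. A direct comparison gives $K_{X'/Y'}=\pi^*K_{X/Y}+E_\pi-f'^*E_\nu$ with $E_\pi=K_{X'/X}$ effective $\pi$-exceptional and $E_\nu=K_{Y'/Y}$ effective $\nu$-exceptional. Pulling back the section defining $H_0$ therefore produces a divisor in $|\ell K_{X'/Y'}+\ell f'^*E_\nu-\ell(\nu\circ f')^*A_Y|$ (after adding $\ell E_\pi$). The term $f'^*E_\nu$ is a full vertical divisor over the $\nu$-exceptional locus, \emph{not} $f'$-exceptional, and there is no mechanism to trade it for one; this is precisely the obstruction flagged in the Remark following \cref{Bertini}. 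Invoking \cite[Lemma~2.5]{Vie90} does not help: that lemma gives $\nu_*f'_*(\ell K_{X'/Y'})\hookrightarrow f_*(\ell K_{X/Y})$, which goes in the wrong direction for your purpose.

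The paper does not sharpen this comparison; it avoids it. The section producing $H_0$ is, by the construction of $\Upsilon$ in the proof of \cref{desired global}, the image of a section $\sigma$ living on the slc family $g'^r:Z'^r\to W$ supplied by the $\mathbb{Q}$-mild reduction of \cref{def: good compactification}. The Koll\'ar condition for slc families yields $\mu'^*\omega_{Z'^r/W}^{[\ell]}\simeq\omega_{Z''/W'}^{[\ell]}$ for the base change $Z'':=Z'^r\times_W W'$ along a desingularization $W'$ of $W\times_Y Y'$, so $\sigma$ pulls back \emph{exactly}, with no correction term. One then descends to $X'$ by rerunning the construction of $\Upsilon$ on the primed side (through a desingularization $M'$ of $Z''$, the inclusion into $\psi'^*f'_*(\ell K_{X'/Y'})$, and the trace splitting of $\psi'_*\oc_{W'}\to\oc_{Y'}$). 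This descent is only available over a big open set of $Y'$, since one must first discard a codimension-two locus inside $D+S$ where $\psi'$ fails to be flat and finite; extending the resulting section of $f'_*(\ell K_{X'/Y'})\otimes\nu^*A_Y^{-\ell}$ across that locus is what introduces the $f'$-exceptional $E$. In short, the slc family is not incidental scaffolding from the proof of \cref{desired global} but the device that replaces the base-change comparison you need and cannot obtain directly.
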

\begin{proof}
	The proof is a continuation of that of \cref{desired global}, and we adopt the same notations therein. By \eqref{eq:surjective2} and the isomorphism $$H^0(Z, \ell K_{Z/W}-\ell g^*A_W)\simeq H^0\big(Z'^r,\ell K_{Z'^r/W}-\ell (g'^r)^*A_W\big),$$   the morphism  $\Phi:\psi_*g_*\big(\ell K_{Z/W} \big)\otimes    A_Y^{-\ell}\to    	 f_*(\ell K_{X/Y}) \otimes   A_Y^{-\ell}$ in \eqref{eq:surjective2} gives rise to a natural map
	\begin{align}\label{eq:upsilon}
	\Upsilon:H^0(Z'^r,\ell  K_{Z'^r/W}-\ell (g'^r)^*A_W)\to H^0\big(Y,f_*(\ell K_{X/Y}) \otimes   A_Y^{-\ell}\big)
	\end{align}
	whose image $I$ generates $f_*(\ell K_{X/Y}) \otimes   A_Y^{-\ell }$ over $Y_0$. Note that $\Upsilon$ is fonctorial in the sense that it does not depend on the choice of the birational model $Z\to Z'^r$. By the base point free theorem, for any $y\in Y_0$, $K_{X_y}$ is semi-ample, and we can assume that $\ell \gg 0$ is sufficiently large and divisible so that $\ell K_{X/Y}$ is relatively semi-ample over $Y_0$. Hence we can take a section 
	\begin{align}\label{eq:sigma}
	\sigma\in  H^0\big(Z'^r,\ell K_{Z'^r/W}-\ell (g'^r)^*A_W\big)
	\end{align}
	so that 
	the  zero divisor  of $$\Upsilon(\sigma)\in H^0\big(X, \ell K_{X/Y}-\ell f^*A_Y\big)= H^0\big(Y,f_*(\ell K_{X/Y}) \otimes   A_Y^{-\ell}\big),$$ denoted by 
	$
	H_1\in |\ell K_{X^{(r)}/Y}-\ell (f^{(r)})^* A_Y  |,
	$     is \emph{transverse} to the fiber $X_{y_0}$.  Denote by $T$ the \emph{discriminant locus} of $H_1\to Y$, and $B:=Y\setminus Y_0$. Then $y_0\notin T\cup B$.  Take a log-resolution $\nu:Y'\to Y$ with centers in $T\cup B$ so that  both $D:=\nu^{-1}(B)$ and $D+S:=\nu^{-1}(T\cup B)$ are simple normal crossing. Let  $X'$ be a strong desingularization of $(X\times_YY')^{\tsim}$, and write $f':X'\to Y'$, which is smooth over $Y_0':=\nu^{-1}(Y_0)$. Set $X_0':=f'^{-1}(Y_0')$.   It suffices to show that, there exists a hypersurface $H$ in  \eqref{eq:same hypersurface}  with  $H_{\upharpoonright (\nu\circ f')^{-1}(V)}=H_{1\upharpoonright ( f^{(r)})^{-1}(V)}$, where $V:=Y\setminus  S'\cup B \subset Y_0$. Since the birational morphism $\nu$ is isomorphic at $y_0$, we can write $y_0$ as $\nu^{-1}(y_0)$ abusively.	 
	
	Now we follow the similar  arguments in \cite[Proposition 4.4]{PTW18} to prove the existence of $H$ (in which they apply their methods for \emph{mild morphisms}).  Let $W'$   be a strong desingularization of $W\times_YY'$ which is finite at $y_0\in Y'$.    Write $W'_0:=\nu'^{-1}(W_0)$. By \cref{KSBA birational}, the new family  $ Z'':=Z'^r\times_WW'\to W'$  is  still an slc family,  which  compactifies the smooth family $X_0'\times_{Y'_0}W'\to W_0'$. Let $M'$ be a desingularization of $Z''$ so that it resolves the rational maps  to $X'$ as well as $Z$.
	\begin{eqnarray*} 
		\xymatrix@!0{
			& X    \ar[ddd]|{f } |!{[dd];[rrr]}\hole
			& && Z \ar[rrr] \ar[lll] \ar[ddd]|{g}|!{[dd];[rrr]}\hole
			& && Z'^r\ar[ddd]|{\ g'^r}
			\\
			&&&&\\
			X'\ar|{\!\! \mu}[uur]   \ar[ddd]|{f'}
			& & &   M'\ar[lll]\ar[uur]\ar[ddd]|{h'} \ar[rrr]  
			& && Z''\ar[ddd]|{g''}\ar|{\  \mu'}[uur]
			\\
			& Y  
			& && W   \ar[lll]|!{[lu];[ddl]}\hole|{\ \psi}     \ar@{=}[rrr]|!{[rru];[ddrr]}\hole
			& && W
			\\
			&&&&\\
			Y' \ar[uur]|{\ \nu}
			& &  & W'\ar[lll]|{\ \psi'}       \ar[ruu]|{\ \nu'}\ar@{=}[rrr] &&& W'\ar[uur]|{\ \ \nu'}
		}
	\end{eqnarray*}


	By the properties of  slc families,   $\mu'^*\omega_{Z'^r/W}^{[\ell]}=\omega_{Z''/W'}^{[\ell]}$, which induces a natural map 
	\begin{align}\label{eq:pull}
	\mu^*:	H^0\big(Z'^r,\ell K_{Z'^r/W}-\ell (g'^r)^*A_W\big)\to H^0\big(Z'',\ell K_{Z''/W'}-\ell (\nu'\circ g'')^*A_W\big).
	\end{align}
	Since  both $Z'^r$ and $Z''$  have canonical singularities,   one has  the following natural morphisms
	\begin{align*}	
	g_*(\ell K_{Z/W}) \simeq (g'^r)_*(\ell K_{Z'^r/W} ), \quad
	h'_*(\ell K_{M'/W'})=g''_*(\ell K_{Z''/W'}).
	\end{align*} 
	We can leave out a subvariety  of codimension at least two in  $Y'$ supported on  $D+S$ (which thus avoids $y_0$ by our construction) so that $\psi':W'\to Y'$ becomes a \emph{flat finite} morphism. As discussed at the beginning of the proof, there is also a natural map
	\begin{align}\label{eq:upsilon2}
	\Upsilon':H^0(Z'',\ell  K_{Z''/W'}-\ell (\nu'\circ g'')^*A_W)\to H^0\big(X', \ell K_{X'/Y'}  -\ell ( \nu\circ f')^*A_Y\big)
	\end{align}
	as \eqref{eq:upsilon} by factorizing through $M'$.
	
	Note that  for $V:=Y\setminus  T\cup B $, $\nu:\nu^{-1}(V)\xrightarrow{\simeq} V$ is also an isomorphism, and thus the restriction of $X\to Y$ to $V$ is isomorphic to  that of $X'\to Y'$ to $\nu^{-1}(V)$.    Hence by our construction,
	the restriction of 	$Z'^r\to W$ to $\psi^{-1}(V)$ is isomorphic to that of $Z''\to W'$ to $(\nu\circ \psi')^{-1}(V)=(\nu'\circ \psi)^{-1}(V)$.
	In particular,  under the above isomorphism, 	for  the section $\sigma\in H^0\big(Z'^r, \ell K_{Z'^r/W}-\ell (g'^r)^*A_W\big)$ in \eqref{eq:sigma} with $\Upsilon(\sigma)$ defining  $H_1$, one has
	\begin{align*} 
	\Upsilon(\sigma)_{\upharpoonright f^{-1}(V)}\simeq  \Upsilon'(\mu^*\sigma)_{\upharpoonright (\nu\circ f')^{-1}(V)}.
	\end{align*} 
	where  $\mu^*$ and $\Upsilon'$ are defined in \eqref{eq:pull} and \eqref{eq:upsilon2}. Denote by $\tilde{H}$    the zero divisor  defined by  $$\Upsilon'(\mu^*\sigma)\in H^0\big(X', \ell K_{X'/Y'}  -\ell  ( \nu\circ f')^*A_Y\big).$$
	Recall that $H_1$ is smooth over $V$, then  $\tilde{H}$ is also smooth over $\nu^{-1}(V)$. 
	Note that $\Upsilon'(\mu^*\sigma)\in H^0\big(Y', f'_*(\ell K_{X'/Y'})\otimes \nu^*A_Y^{-\ell}\big)$  is only defined over a big open set of $Y'$ containing $\nu^{-1}(V)$. Hence it extends to a global section   
	$$s\in H^0(X', \ell K_{X'/Y'} -\ell (\nu\circ f')^*A_Y+\ell E),$$  
	where $E$ is an  $f'$-exceptional effective divisor with  $f'(E)\subset  {\rm Supp}(D+S)$.  Denote by $H$  the hypersurface in $X'$ defined by $s$. Hence $H_{\upharpoonright (\nu\circ f')^{-1}(V)}=\tilde{H}_{\upharpoonright (\nu\circ f')^{-1}(V)}$, which is smooth over $\nu^{-1}(V)=Y'\setminus D\cup S\simeq V\ni y_0$.  Note that the property of effective parametrization is invariant under fiber product.  The theorem follows.  
\end{proof}

\subsection{Kobayashi hyperbolicity of the moduli spaces}\label{construction} 
In this subsection, for effectively parametrized  smooth family of minimal projective manifolds of general type, we refine  the Viehweg-Zuo Higgs bundles in  \cref{thm:existence}   
so that we can apply  \cref{construction of Finsler} and the bimeromorphic criteria for Kobayashi hyperbolicity in \cref{bimeromorphic} to prove \cref{main}. 

\begin{thm}\label{Higgs bundle}
	Let $U\to V$ be an   effectively parametrized smooth family of minimal projective manifolds of general type over the quasi-projective manifold $V$.     
	Then for any  given point $y\in V$,  there exists a smooth projective compactification $Y$ for a birational model $\nu:\tilde{V}\to V$, and a VZ Higgs bundle $(\tilde{\es},\tilde{\theta})\supset (\fs,\eta)$  over $Y$ 
	satisfying the following properties:
	\begin{thmlist}
		\item \label{p1} there is a Zariski open set $V_0$ of $V$ containing $y$ so that  $\nu:\nu^{-1}(V_0)\to V_0$  is an isomorphism.
		\item	\label{p2} Both $D:=Y\setminus \tilde{V}$ and $D+S:=Y\setminus \nu^{-1}(V_0)$ are simple normal crossing divisors in $Y$.
		\item \label{p3} The Higgs bundle $(\tilde{E},\tilde{\theta})$ has log poles supported on $D\cup S$, that is, $\tilde{\theta}:\tilde{E}\to \tilde{E}\otimes \big(\log (D+S)\big)$. 
		\item\label{injection}  The   morphism 
		\begin{align}\label{Torelli}
		\tau_{1}: \ts_Y(-\log D)\rightarrow \ls^{-1}\otimes E^{n-1,1}
		\end{align}
		induced by the sub-Higgs sheaf $(\fs,\eta)$
		is  injective over $V_0$.
	\end{thmlist}
\end{thm}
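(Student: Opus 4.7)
The plan is to combine the refined hypersurface furnished by \cref{cor:main hyper} with the Viehweg--Zuo cyclic cover construction recalled in the proof of \cref{thm:existence}, and then to establish (iv) by factoring $\tau_1$ through the Kodaira--Spencer map of the original family. Most of the geometric input is already in place; the theorem is essentially a \emph{pointwise} sharpening of \cref{thm:existence}, whose novelty lies in choosing the hypersurface $H$ so carefully that $\tau_1$ becomes injective at the prescribed point $y$.

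First I would apply \cref{cor:main hyper} with the given $y\in V$ in the role of $y_0$. This produces an integer $r\gg 0$, a birational morphism $\nu\colon Y'\to Y$ from a smooth projective $Y'$, a compactification $f'\colon X'\to Y'$ of (a birational model of) $U^r\to V$, a Zariski open $V_0\subset V$ with $y\in V_0$, and a hypersurface $H\in|\ell K_{X'/Y'}-\ell(\nu\circ f')^*A_Y+\ell E|$ that is smooth over $V_0$, with the additional properties that $D:=Y'\setminus\tilde V$ and $D+S:=Y'\setminus\nu^{-1}(V_0)$ are SNC, the exceptional locus of $\nu$ is contained in $D+S$, and $\nu$ restricts to an isomorphism $\nu^{-1}(V_0)\xrightarrow{\simeq}V_0$. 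This settles (i) and (ii). Next I would run the Viehweg--Zuo procedure \emph{with this specific $H$}: blow up $X'$ so that $H+f'^{-1}(D+S)$ becomes normal crossing, take the $\ell$-th root along $H$, resolve to obtain $g\colon Z\to Y'$ smooth over $V_0$, and form the (lower) canonical extension of the polarized VHS of weight $n$ attached to $g$, with Higgs bundle $\big(\bigoplus_q E^{n-q,q},\bigoplus_q\theta_{n-q,q}\big)$ logarithmic along $D+S$. Twisting by $\ls^{-1}$ (with $\ls$ the big and nef line bundle constructed from $A_Y$ and $E$) and defining $(\fs,\eta)\subset(\tilde\es,\tilde\theta)$ as the sub-Higgs sheaf generated by the natural inclusion $\oc_{Y'}\hookrightarrow\fs_0$ as in \cite[\S4]{VZ02} yields an abstract VZ Higgs bundle in the sense of \cref{def:VZ}; in particular property (iii) holds.

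For property (iv), the essential point is that over $V_0$ the map $\tau_1$ is, on fibers, a composition
\[
\tau_{1,y}\colon\ts_{V,y}\;\xrightarrow{\rho_y}\;H^1(U_y,\ts_{U_y})\;\xrightarrow{\varphi_y}\;\bigl(\ls^{-1}\otimes E^{n-1,1}\bigr)_y,
\]
where $\rho_y$ is the Kodaira--Spencer map of $U\to V$ and $\varphi_y$ is induced by the Higgs field of the cyclic-cover VHS on $Z\to Y'$. The hypothesis that $U\to V$ is effectively parametrized means precisely that $\rho_y$ is injective. The map $\varphi_y$ is injective because \cref{cor:main hyper} guarantees $H$ is transverse to $U_y$: transversality ensures that the cyclic cover is smooth over $U_y$, so $\varphi_y$ can be identified with (a component of) the cup-product with a class supported by $K_{U_y}$ and is injective by the Akizuki--Nakano-type rigidity of the Hodge filtration on the smooth fiber of the cover. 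Composing two injections and possibly shrinking $V_0$ gives injectivity of $\tau_1$ over $V_0\ni y$.

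The main obstacle is verifying the injectivity of $\varphi_y$; the rest of the argument is bookkeeping of the data already supplied by \cref{cor:main hyper} and \cref{thm:existence}. Making $\varphi_y$ explicit requires unpacking how the Higgs field $\theta_{n,0}\colon E^{n,0}\to E^{n-1,1}\otimes\Omega_{Y'}^1(\log(D+S))$ of the cyclic-cover VHS is built from the infinitesimal deformation of the pair $(U_y,H_{\upharpoonright U_y})$, and then using smoothness of $H_{\upharpoonright U_y}$ (guaranteed by \cref{cor:main hyper}) to conclude that the relevant cup-product map has no kernel at $y$.
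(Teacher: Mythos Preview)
Your outline is structurally correct and matches the paper's approach: apply \cref{cor:main hyper} at the given point, run the Viehweg--Zuo cyclic cover construction with that particular $H$, and then factor $\tau_{1,z}$ through the Kodaira--Spencer map. One small omission: before invoking \cref{cor:main hyper} you must check its hypothesis that only finitely many fibers are birational to a given one; this follows from effective parametrization via the induced quasi-finite map to the coarse moduli space, but it should be said.

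The real gap is your treatment of the injectivity of $\varphi_y$. Your appeal to ``Akizuki--Nakano-type rigidity'' is not the right tool, and the phrase ``cup-product with a class supported by $K_{U_y}$'' does not describe what is going on. The paper factors $\varphi_z$ (for the fiber $X_z$ of the $r$-fold family) explicitly as
\[
H^1(X_z,\ts_{X_z})\;\xrightarrow{\simeq}\;H^1\bigl(X_z,\Omega^{n-1}_{X_z}\otimes K_{X_z}^{-1}\bigr)\;\xrightarrow{j_z}\;H^1\bigl(X_z,\Omega^{n-1}_{X_z}(\log H_z)\otimes K_{X_z}^{-1}\bigr)\;\hookrightarrow\;H^1\bigl(Z_z,\Omega^{n-1}_{Z_z}\bigr).
\]
Injectivity of $j_z$ comes from the long exact sequence of the residue triangle together with the \emph{Bogomolov--Sommese} vanishing $H^0\bigl(H_z,\Omega^{n-2}_{H_z}\otimes K_{X_z}^{-1}|_{H_z}\bigr)=0$, which in turn needs the verification that $K_{X_z}|_{H_z}$ is big (an intersection-number computation using $H_z\in|\ell K_{X_z}|$ and $\mathrm{vol}(K_{X_z})>0$). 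Kodaira--Nakano would require $K_{X_z}$ ample, which is exactly what you \emph{don't} have in the big-and-nef case; this is the point where the minimal-of-general-type hypothesis is actually used. The last inclusion comes from the eigenspace decomposition $(\psi_z)_*\Omega^{n-1}_{Z_z}\simeq\Omega^{n-1}_{X_z}\oplus\bigoplus_{i=1}^{\ell-1}\Omega^{n-1}_{X_z}(\log H_z)\otimes K_{X_z}^{-i}$ for the finite cyclic cover $\psi_z$, which makes the log-cohomology group a direct summand of $H^1(Z_z,\Omega^{n-1}_{Z_z})$. Without these two ingredients---Bogomolov--Sommese and the cyclic-cover splitting---your argument for (iv) is incomplete.
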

\begin{proof}
	The proof is a continuation of that of \cref{thm:existence}, and we will adopt the same notations. 
	
	We first prove that for any \(y\in V\),  the set of \(z\in V\) with \( X_z \stackrel{{\rm bir}}{\sim} X_y \) is finite.    Take a polarization $\hs$ for $U\to V$ with the Hilbert polynomial $h$.  	Denote by \(\mathscr{P}_h(V) \) the set of such pairs \(( U\to V, \hs)\), up to isomorphisms and up to fiberwise numerical equivalence for \(\hs \). By \cite[Section 7.6]{Vie95}, there exists a coarse quasi-projective moduli scheme \(P_h \) for \(\mathscr{P}_h \), and thus the  family induces a morphism \(V\to P_h\).  By the assumption that the family \(U\to V \)  is effectively parametrized, the induced morphism \(V \to {P}_h \) is quasi-finite, which in turn shows that the set of  \(z\in V\) with \( X_z\) isomorphic to  \(X_y \) is finite. Note that a projective manifold of  general type   has  finitely  many minimal models. Hence   the set of \(z\in V'\) with \( X_z \stackrel{{\rm bir}}{\sim} X_y \) is finite as well.   
	
	Now we will choose the hypersurface in \eqref{eq:cyclic} carefully so that the cyclic cover construction in \cref{thm:existence} can provide the desired refined VZ Higgs bundle. 	Let $  Y'\supset V$ be the smooth  compactification in \cref{def: good compactification}.   By \cref{cor:main hyper}, for any given point $y\in V$ and any sufficiently ample line bundle $\as$ on $Y'$, there exists a   birational morphism $\nu:Y\to Y'$   and a new algebraic fiber space $f:X\to Y$ so that  
	one can find a hypersurface 
	\begin{align}\label{hyper for cyclic}
	H\in |\ell \Omega^n_{X/Y}(\log \Delta)-\ell (\nu\circ f)^*\as+\ell E|,\quad n:=\dim X-\dim Y
	\end{align}
	satisfying that
	\begin{itemize}[leftmargin=0.6cm] 
		\item the inverse image $D:=\nu^{-1}(Y'\setminus V)$ is a simple normal crossing divisor.
		\item There exists a reduced divisor $S$ so that $D+S$ is simple normal crossing, and $H\to Y$ is smooth over $V_0:=Y\setminus (D\cup S)$.
		\item The restriction  $\nu:\nu^{-1}(V_0)\to V_0$ is  an isomorphism. 
		\item The given point $y$ is contained in $V_0$. 
		\item The divisor $E$ is effective and   $f$-exceptional  with $f(E)\subset {\rm Supp}(D+S)$.
		\item For any $z\in V:=\nu^{-1}(V')$, the canonical bundle of the  fiber $X_z:={f}^{-1}(z)$ is big and nef.
		\item The restricted family $f^{-1}(V_0)\to V_0$ is smooth and effectively parametrized.
	\end{itemize}
	Here we set $\Delta:=f^*D$ and $\Sigma:=f^*S$. 	Write 
	$\ls:=\nu^*\as$. Now we take the cyclic cover with respect to  $H$ in \eqref{hyper for cyclic} instead of that in \eqref{eq:cyclic},  and perform the same construction of VZ Higgs $(\tilde{\es},\tilde{\theta})\supset (\fs,\tau)$  bundle as in \cref{thm:existence}.  \cref{p1,p2,p3} can be seen directly from the properties of $H$ and the cyclic construction.

	\cref{injection} has already appeared in \cite[Proposition 2.11]{PTW18} implicitly, and we give a proof here  for the sake of completeness. 
	Recall that both $Z$ and $H$ are   smooth over $V_0$. Denote by    $H_0:=H\cap f^{-1}(V_0)$,   $f_0:X_0=f^{-1}(V_0)\to V_0$, and $g_0:Z_0=g^{-1}(V_0)\to V_0$.  We have
	\begin{align} \nonumber
	F^{n,0}_{\upharpoonright V_0}&=  f_*\big(\Omega^{n}_{X/Y}(\log \Delta)\otimes \lc^{-1}\big)_{\upharpoonright V_0}=\oc_{V_0}\\\nonumber
	E^{n-1,1}_{\upharpoonright V_0}&= R^1 (g_0)_* ( \Omega^{n-1}_{Z_0/V_0})=R^1 (f_0)_* \big(\Omega^{n-1}_{X_0/V_0}\oplus  \bigoplus_{i=1}^{\ell-1} \Omega^{n-1}_{X_0/V_0}(\log H_0)\otimes  (K_{X_0/V_0}\otimes f_0^*\ls^{-1})^{-i}\big)\\\label{eq333}
	F^{n-1,1}_{\upharpoonright V_0}&= R^1 f_*\big(\Omega^{n-1}_{X/Y}(\log \Delta)\otimes \lc^{-1}\big)_{\upharpoonright V_0}=R^1 ({f_0})_*\big( \Omega^{n-1}_{X_0/V_0}\otimes K_{X_0/V_0}^{-1} \big)\simeq R^1 ({f_0})_*(\ts _{X_0/V_0}).
	\end{align}  
	Hence ${\tau_1}_{\upharpoonright V_0}$ factors through 
	\begin{align*}
	{\tau_1}_{\upharpoonright V_0}: &\ts_{V_0}\xrightarrow{\rho}  R^1 ({f_0})_*( \ts _{X_0/V_0})\xrightarrow{\simeq} R^1 ({f_0})_*\big( \Omega^{n-1}_{X_0/V_0}\otimes K_{X_0/V_0}^{-1} \big) \to \\
	& R^1 ({f_0})_*\big( \Omega^{n-1}_{X_0/V_0}(\log H_0)\otimes K_{X_0/V_0}^{-1} \big)  \to R^1 (g_0)_* ( \Omega^{n-1}_{Z_0/V_0})\otimes \ls^{-1},
	\end{align*} 
	where $\rho$ is the Kodaira-Spencer map. Although the intermediate objects in the above factorization might not be  locally free,  the induced $\cb$-linear  map by the sheaf morphism \({\tau_1}_{\upharpoonright V_0} \) at the   \(z\in V_0\)
	\[
	\tau_{1,z}:\ts_{Y, z}\to (\ls^{-1}\otimes E^{n-1,1})_{z}
	\]  
	coincides with the following composition of  \(\cb \)-linear maps between finite dimensional complex vector spaces
	\begin{align}\label{long}
	\tau_{1,z}: \ts_{Y, z}\xrightarrow{\rho_z}H^1(X_z, \ts_{X_z})\xrightarrow{\simeq} H^1(X_z, \Omega^{n-1}_{X_z}\otimes K^{-1}_{X_z})\xrightarrow{j_z}\\\nonumber
	H^1\big(X_z, \Omega^{n-1}_{X_z}(\log H_z)\otimes K^{-1}_{X_z}\big)\to H^1\big(Z_z, \Omega^{n-1}_{Z_z}\big).
	\end{align}
	To prove \cref{injection}, it then suffices to prove that each linear map in \eqref{long} is injective for any $z\in V_0$. 
	
	By the effective parametrization assumption, $\rho_z$ is injective.  
	The  map $j_z$ in \eqref{long} is the same as the \(H^1 \)-cohomology map of the   short exact sequence
	\[0\to K^{-1}_{X_z}\otimes\Omega^{n-1}_{X_z}\to K^{-1}_{X_z}\otimes\Omega^{n-1}_{X_z}(\log H_z)\to K^{-1}_{X_z\upharpoonright H_z}\otimes\Omega^{n-2}_{H_z}\to 0.\]
	Observe that $  K_{X_z \upharpoonright H_z}$ is big. Indeed,  this follows from that 
	\[
	\vol(K_{ {X}_z\upharpoonright H_z})=c_1(K_{ {X}_z\upharpoonright H_z})^{n-1} =c_1(K_{ {X}_z})^{n-1}\cdot H_z=\ell c_1(K_{ {X}_z})^{n}=\ell \vol(K_{ {X}_z})>0.
	\]
	Hence $j_z$ injective by the Bogomolov-Sommese vanishing theorem 
	\[
	H^0\big(H_z, K^{-1}_{X_z\upharpoonright H_z}\otimes\Omega^{d-2}_{H_z}\big)=0,
	\] 
	as  observed in \cite{PTW18}.  
	Since \(\psi_z:Z_z\to X_z \) is the cyclic cover obtained by  taking the  \(\ell \)-th roots out of the smooth hypersurface \(H_z\in |\ell K_{X_z}| \),  the morphism \(\psi \) is   finite. It follows from the degeneration of the Leray spectral sequence that
	\begin{align}\label{Leray}
	H^1(Z_{z}, \Omega^{n-1}_{Z_z})\simeq H^1\big(X_z,(\psi_z)_*\Omega^{n-1}_{Z_z}  \big)=H^1\big(X_z, \Omega^{n-1}_{X_z} \big)\oplus \bigoplus_{i=1}^{\ell -1} H^1\big(X_z, \Omega^{n-1}_{X_z}(\log H_z)\otimes K_{X_z}^{-i}  \big).
	\end{align}
	The last map in \eqref{long}  is therefore injective, for the cohomology group
	$H^1\big(X_z, \Omega^{n-1}_{X_z}(\log H_z)\otimes K_{X_z}^{-1}  \big)$ 
	is  a direct summand of \(H^1(Z_{z}, \Omega^{n-1}_{Z_z})\) by \eqref{Leray}. As a consequence, the  composition $\tau_{1,z}$ in \eqref{long} is injective at each point $z\in V_0$.   \cref{injection} is thus proved. 
\end{proof}

Let us explain how \cref{bimeromorphic,construction of Finsler,Higgs bundle} imply \cref{main}.
\begin{proof}[Proof of \cref{main}]
	We first take    a smooth compactification $Y\supset  V$ as in   \cref{def: good compactification},. 
	By  \cref{Higgs bundle}, for any given point $y\in V$,  there exists a birational morphism $\nu:Y'\to Y$ which is  isomorphic at $y$,   
	so that $D:=Y'\setminus \nu^{-1}(V)$ is a simple normal crossing divisor, and there exists a VZ Higgs bundle $(\tilde{\es},\tilde{\theta})$  whose log pole $D+S$ avoids $y':=\nu^{-1}(y)$. Moreover, by \cref{injection}, $\tau_1$ is injective at $y'$.  Applying \cref{construction of Finsler}, we can associate  $(\tilde{\es},\tilde{\theta})$ a Finsler metric \(F\) on \(\ts_{Y'}(-D)\)  which is positively  definite at \(y'\).  Moreover, if we think of  $F$   as a Finsler metric on $\nu^{-1}(V)$, it is negatively curved in the sense of \cref{negatively curved}. 
	Hence the base $V$ satisfies the conditions in \cref{bimeromorphic}, and we conclude that \(V\) is Kobayashi hyperbolic.
\end{proof}

\appendix
\section{$\mathbb{Q}$-mild reductions (by Dan Abramovich)} \label{appendix}
Let us work over complex number field $\mathbb{C}$. 

The main result in this appendix is the following:
\begin{thm}\label{thm:ab}
	Let $f_0: S_0 \to T_0$ be a projective family of smooth varieties with $T_0$ quasi-projective.
	\begin{thmlist}
		\item There are compactifications $S_0 \subset \cS$ and $T_0 \subset \cT$, with $\cS$ and $\cT$ Deligne-Mumford stacks with projective coarse moduli spaces, and a projective morphism $f:\cS \to \cT$ extending  $f_0$ which is a Koll\'ar family of slc varieties.
		\item Given a finite subset $Z \subset T_0$ there is a projective variety $W$ and finite surjective lci morphism $\rho:W \to \cT$, unramified over $Z$, such that $\rho^{-1} \cT^{sm} = W^{sm}$. 
	\end{thmlist}    
\end{thm}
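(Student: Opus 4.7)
My plan is to prove (i) by realizing $\cT$ as the closure of the classifying map of $f_0$ inside Alexeev's moduli stack of stable maps associated to a sufficiently positive relative polarization, with $\cS$ the pullback of the universal family. For (ii), I plan to construct $W$ as a Kawamata-style cyclic cover built from a generic section of a sufficiently positive ample line bundle on $\cT$, with Bertini-general choices arranging the required properties. I anticipate the main difficulty will be the simultaneous control in (ii) of the lci property, the unramifiedness over $Z$, and the precise equality $\rho^{-1}(\cT^{sm}) = W^{sm}$.

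For (i), I would choose a relative polarization $L$ on $f_0$ and an ample line bundle $A$ on $T_0$ so that $L \otimes f_0^{*} A^{\otimes k}$ is very ample on $S_0$ for $k \gg 0$, giving an embedding $S_0 \hookrightarrow T_0 \times \mathbb{P}^N$ over $T_0$ with fixed Hilbert polynomial $h$. The resulting family of smooth polarized projective varieties is classified by a morphism $\phi_0 \colon T_0 \to \mathcal{M}$, where $\mathcal{M}$ denotes Alexeev's moduli stack of stable maps into $\mathbb{P}^N$ with slc domain and Hilbert polynomial $h$. By Alexeev's properness theorem, together with subsequent KSBA developments due to Koll\'ar, Hacon--Xu, and Kov\'acs--Patakfalvi, $\mathcal{M}$ is a proper Deligne--Mumford stack with projective coarse moduli. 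Take $\cT$ to be (the normalization of) the scheme-theoretic image closure of $\phi_0$, and let $\cS \to \cT$ be the pullback of the universal family; the Koll\'ar base-change condition is built into this universal family, so $f \colon \cS \to \cT$ is as required.

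For (ii), let $T$ denote the projective coarse moduli of $\cT$, and fix a very ample line bundle on $T$ whose pullback to $\cT$ is, after sufficient tensor power, an ample Cartier line bundle $\mathcal{N}$. Choose a Bertini-general section $s \in H^0(\cT, \mathcal{N})$ whose zero divisor $H$ simultaneously avoids the finite set $Z$, avoids the singular locus $\cT \setminus \cT^{sm}$, and is smooth along $\cT^{sm}$; all three can be arranged for $\mathcal{N}$ sufficiently positive by standard Bertini-type arguments on the coarse moduli. Form $\rho \colon W \to \cT$ as the degree-$e$ cyclic cover extracting an $e$-th root of $s$: this is finite, surjective, and locally a complete intersection by construction. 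Since $H$ misses $Z$, the cover is \'etale over $Z$; since $H$ misses the singular locus of $\cT$ and $\rho$ is \'etale there, the singularities of $\cT$ propagate faithfully to $W$; and since $H$ is smooth on $\cT^{sm}$, the standard analysis of cyclic covers along smooth branch divisors in smooth varieties shows that $W$ is smooth exactly above $\cT^{sm}$.

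The hardest step will be arranging all three properties of $H$ simultaneously, which requires choosing $\mathcal{N}$ sufficiently ample to dominate both $Z$ and the singular stratification of $\cT$. Each condition in isolation is a standard Bertini or Kawamata-type statement, but the combination demands a careful positivity estimate, and one may in practice need to iterate the construction or take a fiber product of several such covers to fully separate $Z$ from the singularities. A subsidiary technical issue in (i) is the precise formulation of the universal Koll\'ar property on $\mathcal{M}$, which I would handle by direct appeal to the moduli-theoretic foundations for stable slc families cited in the references surrounding the $\mathbb{Q}$-mild reduction.
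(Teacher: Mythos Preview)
Your approach to part (i) is essentially the paper's: realize $T_0$ inside a moduli stack of Alexeev stable maps, take the closure, and pull back the universal family. The paper targets stable maps into an arbitrary projective compactification $S \supset S_0$ (using the inclusion $S_0 \hookrightarrow S$ as the map), whereas you embed in $\mathbb{P}^N$; both choices work and the argument is the same. The properness and projectivity inputs the paper actually invokes are \cite{DR18} (and \cite{Kar00}), not the KSBA chain you name, but this is a citation issue rather than a mathematical one.

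For part (ii) there is a genuine gap. Your cyclic cover $W \to \cT$ along a general hypersurface $H$ does not produce a \emph{scheme}: away from $H$ the cover is \'etale, so the inertia of the Deligne--Mumford stack $\cT$ survives unchanged there, and $W$ remains a stack with nontrivial stabilizers wherever $\cT$ has them. Since $\cT$ is obtained as a closure inside a moduli stack of stable maps, one must expect nontrivial automorphism groups along the boundary, and a single cyclic branched cover along a divisor that (by your own Bertini choice) avoids this locus cannot kill them. The conclusion ``$W$ is a projective variety'' therefore fails. Producing a finite surjective lci map from a scheme to a DM stack, with control on the unramified locus, is exactly the content of Kresch--Vistoli \cite{KV04}, which the paper simply cites; this is not a Bertini-type statement and cannot be recovered from your construction. (Your Kawamata-cover idea does reappear in the paper, but only in the proof of the \emph{corollary}, where it is applied to the smooth coarse space $\underline{\cT}$ and then lifted to $\cT$ via the purity lemma --- a different logical step from what you are attempting here.)
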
 
Here the notion of Koll\'ar family refers to the condition that the sheaf $\omega_{\cS / \cT}^{[m]}$ is flat and its formation  commutes with arbitrary base change for each $m$. We   refer the readers to \cite[Definition 5.2.1]{AH11} for further details.

Note that the pullback family $\cS \times_\cT W \to W$ is a Koll\'ar family of slc varieties compactifying the pullback $S_0 \times_{T_0} W_0 \to W_0$ of the original family to $W_0 :=W \times_\cT T_0$.

This is applied in the present paper, where some mild regularity assumption on $T_0$ and $W$ is required:

\begin{cor}[$\mathbb{Q}$-mild reduction] \label{def: good compactification} Assume further $T_0$ is smooth. For any given finite subset $Z\subset T_0$,  there exist
	\begin{thmlist}
		\item a compactification $T_0 \subset \underline{\cT}$ with $\underline{\cT}$ a regular projective scheme,  
		\item a simple normal crossings divisor $D \subset \underline{\cT}$ containing $\underline{\cT} \smallsetminus T_0$ and disjoint from $Z$, 
		\item a finite morphism $W \to \underline{\cT}$ unramified outside $D$, and
		\item A Koll\'ar family $S_W \to W$ of slc varieties extending the given family $S_0 \times_{\underline{\cT}}W$. 
	\end{thmlist}	
\end{cor}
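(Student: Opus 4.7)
The plan is to combine the two parts of Theorem~\ref{thm:ab} with a single log resolution of the coarse moduli. First I would apply Theorem~\ref{thm:ab}(i) to obtain a Deligne--Mumford compactification $T_0 \subset \cT$ with projective coarse moduli space $\pi \colon \cT \to \cT_c$, carrying a Koll\'ar family $f \colon \cS \to \cT$ of slc varieties extending $f_0$. Since $T_0$ sits in $\cT$ as an open subscheme, the generic stabilizer of $\cT$ is trivial and $\pi$ restricts to an isomorphism over $T_0 \subset \cT_c$. Next I would apply Theorem~\ref{thm:ab}(ii) to the same $\cT$ and the finite set $Z$ to obtain a projective variety $W_1$ together with a finite surjective lci morphism $\rho \colon W_1 \to \cT$, unramified over $Z$ and smooth wherever $\cT$ is smooth.

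The composite $\psi := \pi \circ \rho \colon W_1 \to \cT_c$ is then a finite morphism of projective varieties, unramified above $Z$. Let $B \subset \cT_c$ be the union of $\cT_c \setminus T_0$ with the branch locus of $\psi$, so that $Z \cap B = \varnothing$. I would then choose a log resolution $\nu \colon \underline{\cT} \to \cT_c$ of the pair $(\cT_c, B)$ which is an isomorphism over $\cT_c \setminus B$. This produces a regular projective $\underline{\cT}$ together with a simple normal crossings divisor $D := \nu^{-1}(B)_{\mathrm{red}}$ containing $\underline{\cT} \setminus T_0$ and disjoint from $Z \subset \underline{\cT}$, where we identify $Z$ with its image under the inverse of $\nu$.

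Finally I would let $W$ be the normalization of the main component of the fibre product $W_1 \times_{\cT_c} \underline{\cT}$, with induced finite morphism $p \colon W \to \underline{\cT}$. Since $\psi$ is \'etale over $\cT_c \setminus B$, the map $p$ is \'etale, and in particular unramified, over $\underline{\cT} \setminus D$. The first projection composed with $\rho$ yields a natural morphism $W \to W_1 \xrightarrow{\rho} \cT$, and I would set $S_W := \cS \times_\cT W$. The Koll\'ar condition in Theorem~\ref{thm:ab}(i) --- that the formation of $\omega_{\cS/\cT}^{[m]}$ commutes with arbitrary base change --- makes $S_W \to W$ into a Koll\'ar family of slc varieties automatically, and over $W_0 := p^{-1}(T_0)$ it restricts to $S_0 \times_{T_0} W_0 = S_0 \times_{\underline{\cT}} W$ as required. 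The main delicate point is that a single log resolution simultaneously SNC-ifies the stacky boundary $\cT_c \setminus T_0$ and the branch divisor of $\psi$ while leaving a neighborhood of $Z$ untouched; this is immediate from Hironaka because $Z$ is a finite set disjoint from $B$ by construction.
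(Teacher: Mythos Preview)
Your route differs from the paper's: you invoke part (ii) of Theorem~\ref{thm:ab} to produce a scheme cover $W_1 \to \cT$ first and then resolve the base, whereas the paper never uses part (ii) in the proof of the corollary. Instead it resolves the coarse moduli space to a smooth $\underline{\cT} \supset T_0$ with SNC boundary $D_\infty$, reads off the ramification indices $m_i$ of $\cT \to \underline{\cT}$ along the components $D_i$, builds a Kawamata cover $W \to \underline{\cT}$ with ramification divisible by the $m_i$, and finally uses the Purity Lemma of \cite{AV02} to lift $W$ to $\cT$.

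There is, however, a genuine gap in your argument. Theorem~\ref{thm:ab}(ii) only guarantees that $\rho$ is unramified over the finite set $Z$; nothing prevents $\rho$ (and hence $\psi = \pi \circ \rho$) from ramifying elsewhere in $T_0$, and there is no reason for the resulting branch divisor inside $T_0$ to be simple normal crossings. Your log resolution $\nu \colon \underline{\cT} \to \cT_c$ of the pair $(\cT_c, B)$ must then blow up inside $T_0$ to make $B$ into an SNC divisor, so $\underline{\cT}$ is only a compactification of $\cT_c \setminus B \subsetneq T_0$, not of $T_0$ itself, and condition (i) fails (and with it the formulation of (iv)). The paper's order of operations sidesteps this: the Kawamata branch divisor $D$ is SNC \emph{by construction} (it is $D_\infty$ together with general very ample divisors chosen to miss $Z$), so no further resolution is needed and the inclusion $T_0 \subset \underline{\cT}$ survives intact.
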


The significance of these extended families is through their $\mathbb{Q}$-mildness property. Recall from \cite{AK00} that a family $S \to T$ is \emph{$\mathbb{Q}$-mild} if whenever $T_1 \to T$ is a dominant morphism with $T_1$ having at most Gorenstein canonical singularities, then the total space $S_1 = T_1 \times_S T$ has canonical singularities.
It was shown by Koll\'ar--Shepherd-Barron \cite[Theorem 5.1]{KSB88} and Karu  \cite[Theorem  2.5]{Kar00} that Koll\'ar families of slc varieties whose generic fiber has at most Gorenstein canonical singularities are $\mathbb{Q}$-mild.

The main result is proved using moduli of Alexeev stable maps.

Let $V$ be a projective variety. A morphism $\phi:U \to V$ is a \emph{stable map} if $U$ is slc and $K_U$ is $\phi$-ample. More generally, given  $\pi: U \to T$,  a morphism $\phi:U \to V$ is a \emph{stable map over $T$} or a \emph{family of stable maps parametrized by $T$} if $\pi$ is a Koll\'ar family of slc varieties and  $K_{U/T}$ is $\phi\times \pi$-ample. Note that this condition is very flexible and does not require the fibers to be of general type, although key applications in \cref{desired global,cor:main hyper} require some positivity of the fibers.

\begin{thm}[\!\protect{\cite[Theorem 1.5]{DR18}}] Stable maps form an algebraic stack $M(V)$ locally of finite type over $\mathbb{C}$, each of whose connected components is a proper global quotient stack with projective coarse moduli space.
\end{thm}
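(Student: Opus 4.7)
The plan is to adapt the KSBA moduli construction to the relative setting, where a morphism to the fixed target $V$ is part of the data. The three assertions (algebraicity, global-quotient presentation of components, projectivity of coarse moduli) should follow in sequence via boundedness and a $\mathrm{PGL}$-quotient picture, a relative stable-reduction argument, and Koll\'ar's ampleness lemma.

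First I would stratify $M(V)$ by the Hilbert polynomial $h$ of the line bundle $\omega_{U/T}^{[m]}\otimes \phi^*\mathcal{L}^{\otimes mN}$ on fibers, where $\mathcal{L}$ is a fixed ample line bundle on $V$ and $N\gg 0$ is chosen so that this bundle is $\pi$-ample for stable maps; such $N$ exists by the stability hypothesis that $K_{U/T}$ is $\phi\times\pi$-ample. Boundedness of each stratum $M_h(V)$ should follow by adapting the Hacon-McKernan-Xu boundedness theorem for KSBA stable varieties to this relative setting: for some uniform $m_0$ depending only on $h$, the reflexive power $\omega_{U/T}^{[m_0]}\otimes \phi^*\mathcal{L}^{\otimes m_0 N}$ becomes $\pi$-very ample with uniform cohomological invariants. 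This gives an embedding $U\hookrightarrow \mathbb{P}^r\times V$ over $T$ and realizes $M_h(V)$ as the quotient of a locally closed $\mathrm{PGL}_{r+1}$-invariant subscheme $Z_h\subset \mathrm{Hilb}(\mathbb{P}^r\times V)$ parametrizing stable maps of type $h$, yielding both algebraicity and the global-quotient presentation.

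The main obstacle will be properness of each $M_h(V)$. Separatedness comes from the uniqueness of the canonical model of $\phi:U\to V$ relative to $\phi\times\pi$. For the existence of limits, given a family of stable maps over a punctured disk $\Delta^*$, I would apply the Abramovich-Karu weak semistable reduction after a finite base change of $\Delta$ to extend to a family of slc varieties over $\Delta$, and then run a relative MMP over $\Delta\times V$ with respect to $K_{U/\Delta}\otimes \phi^*\mathcal{L}^{\otimes N}$ to contract all $K$-trivial components of the central fiber relative to $\phi\times\pi$. This should produce a unique canonical limit in $M_h(V)$; preservation of the Koll\'ar condition that $\omega_{U/T}^{[m]}$ is flat and compatible with arbitrary base change then follows from standard flatness arguments in slc families.

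Finally, projectivity of the coarse moduli space should follow by applying Koll\'ar's ampleness lemma to the descent of the line bundle $\det \pi_*\big(\omega_{U/T}^{[m]}\otimes \phi^*\mathcal{L}^{\otimes mN}\big)^{\otimes k}$ (for $m$ large and divisible, and $k\gg 0$) from $Z_h$ to the coarse space of $[Z_h/\mathrm{PGL}_{r+1}]$; the descended line bundle is then shown to be ample on each connected component, giving a projective coarse moduli space.
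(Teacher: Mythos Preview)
The paper does not prove this theorem: it is quoted from \cite{DR18} and used as a black box in the appendix. The only material the paper supplies in lieu of a proof is the short paragraph of attribution immediately following the statement, which records that the existence of the algebraic stack and the valuative criterion were known to Alexeev and can be read off \cite{AH11} (which also gives the global-quotient presentation), that boundedness and projectivity of the coarse moduli space are the contribution of \cite{DR18}, and that an algebraic route is given in \cite[Corollary~1.2]{Kar00}. There is therefore no proof in the paper to compare your proposal against.

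That said, your outline is a reasonable sketch of the \emph{algebraic} approach the paper alludes to via \cite{Kar00} and the KSBA program, rather than the approach of \cite{DR18} itself (Dervan--Ross work through K-stability and analytic methods to obtain boundedness). Your stratification by Hilbert polynomial of $\omega_{U/T}^{[m]}\otimes\phi^*\mathcal{L}^{mN}$, the Hilbert-scheme/$\mathrm{PGL}$-quotient presentation, stable reduction plus relative MMP for the valuative criterion, and Koll\'ar's ampleness lemma for projectivity are the expected ingredients. Two places where your sketch is genuinely thin: boundedness for slc (not klt) fibers in the presence of the map $\phi$ requires more than invoking Hacon--McKernan--Xu verbatim, and the preservation of the Koll\'ar condition under the relative MMP step is exactly the delicate point in this circle of ideas and does not follow from ``standard flatness arguments'' alone. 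If you want to turn this into a proof, those are the steps that need real work; but for the purposes of this paper, the theorem is simply cited.
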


The existence of an algebraic stack satisfying the valuative criterion for properness was known to Alexeev, and can also be deduced directly from the results of \cite{AH11}, which presents it as a global quotient stack. The work \cite{DR18}  shows that the stack has bounded, hence proper components, admitting projective course moduli spaces. An algebraic approach for these statements is provided in \cite[Corollary 1.2]{Kar00}.


\begin{proof}[Proof of \cref{thm:ab}]
	\begin{enumerate}[leftmargin=0cm,itemindent=0.7cm,labelwidth=\itemindent,labelsep=0cm, align=left,label= {\rm (\roman*)},itemsep=0.07cm]
		\item 	Let $T_0 \subset T$  and $S_0 \subset S$ be  projective compactifications with $\pi: S \to T$ extending $f_0$. The family $S_0 \to T_0$ with the injective morphism $\phi: S_0 \to S$ is a family of stable maps into $S$, providing a morphism $T_0 \to M(S)$ which is in fact injective. Let $\cT$ be the closure of $T_0$.  Since $M(S)$ is proper, $\cT$ is proper. Let $\cS$ be the pullback of the universal family along $\cT \to M(S/T)$. Then $\cS\supset S_0$ is a compactification as needed.
		\item The existence of $W$ follows from the main result of \cite{KV04}. 	\qedhere	
	\end{enumerate} 
\end{proof}
\begin{proof}[Proof of \cref{def: good compactification}]
	Consider the coarse moduli space $\underline{\cT}$ of the stack $\cT$ provided by the first part of the main result. This might be singular, but by Hironaka's theorem  we may replace it by a resolution of singularities such that $D_\infty:=\underline{\cT}\smallsetminus T_0$ is a simple normal crossings divisor. Thus condition (\lowerromannumeral{1})  is satisfied.
	
	For each component $D_i\subset D_\infty$ denote by $m_i$ the ramification index of $\cT \to  \underline{\cT}$. In particular any covering $W \to \underline{\cT}$ whose ramification indices over $D_i$ are divisible by $m_i$ lifts along the generic point of $D_i$ to $\cT$. 
	
	Choosing a Kawamata covering package \cite{AK00} disjoint from $Z$ we obtain a simple normal crossings divisor $D$ as required by (\lowerromannumeral{2}), and finite covering $W \to \underline{\cT}$ as required by (\lowerromannumeral{3}), such that $W \to \underline{\cT}$ factors through $\cT$ at every generic point of $D_i$. 
	
	By the Purity Lemma \cite[Lemma 2.4.1]{AV02} the morphism $W \to \cT$ extends over all of $W$, hence we obtain a family $S_W \to W$ as required by (\lowerromannumeral{4}).
\end{proof}


\begin{thebibliography}{{Den}18b} 
	\providecommand{\url}[1]{\texttt{#1}}
	\providecommand{\urlprefix}{URL }
	
	\bibitem[AH11]{AH11}
	Dan Abramovich and Brendan Hassett.
	\newblock \enquote{Stable varieties with a twist.}
	\newblock In \enquote{Classification of algebraic varieties,} EMS Ser. Congr.
	Rep., 1--38,  (Eur. Math. Soc., Z\"urich2011).
	\newblock \urlprefix\url{http://dx.doi.org/10.4171/007-1/1}.
	
	\bibitem[AK00]{AK00}
	Dan Abramovich and Kalle Karu.
	\newblock \enquote{Weak semistable reduction in characteristic 0.}
	\newblock \emph{Invent. Math.} (2000) vol. 139~(2): 241--273.
	\newblock \urlprefix\url{https://doi.org/10.1007/s002229900024}.
	
	\bibitem[Ara04]{Ara04}
	Donu Arapura.
	\newblock \enquote{Frobenius amplitude and strong vanishing theorems for vector
		bundles.}
	\newblock \emph{Duke Math. J.} (2004) vol. 121~(2): 231--267.
	\newblock \urlprefix\url{http://dx.doi.org/10.1215/S0012-7094-04-12122-0}.
	\newblock With an appendix by Dennis S. Keeler.
	
	\bibitem[AV02]{AV02}
	Dan Abramovich and Angelo Vistoli.
	\newblock \enquote{Compactifying the space of stable maps.}
	\newblock \emph{J. Amer. Math. Soc.} (2002) vol.~15~(1): 27--75.
	\newblock \urlprefix\url{http://dx.doi.org/10.1090/S0894-0347-01-00380-0}.
	
	\bibitem[BD18]{DB18}
	S{\'e}bastien {Boucksom} and Simone {Diverio}.
	\newblock \enquote{{A note on Lang's conjecture for quotients of bounded
			domains}.}
	\newblock \emph{arXiv e-prints} (2018) arXiv:1809.02398.
	
	\bibitem[BD19]{BD19}
	Damian Brotbek and Ya~Deng.
	\newblock \enquote{Kobayashi hyperbolicity of the complements of general
		hypersurfaces of high degree.}
	\newblock \emph{Geometric and Functional Analysis} (2019)
	\urlprefix\url{http://dx.doi.org/10.1007/s00039-019-00496-2}.
	
	\bibitem[BHPS13]{BHPS13}
	Bhargav Bhatt, Wei Ho, Zsolt Patakfalvi, and Christian Schnell.
	\newblock \enquote{Moduli of products of stable varieties.}
	\newblock \emph{Compos. Math.} (2013) vol. 149~(12): 2036--2070.
	\newblock \urlprefix\url{http://dx.doi.org/10.1112/S0010437X13007288}.
	
	\bibitem[BP08]{BP08}
	Bo~Berndtsson and Mihai P{\u a}un.
	\newblock \enquote{Bergman kernels and the pseudoeffectivity of relative
		canonical bundles.}
	\newblock \emph{Duke Math. J.} (2008) vol. 145~(2): 341--378.
	\newblock \urlprefix\url{https://doi.org/10.1215/00127094-2008-054}.
	
	\bibitem[BP10]{BP10}
	Bo~{Berndtsson} and Mihai {P{\u{a}}un}.
	\newblock \enquote{{Bergman kernels and subadjunction}.}
	\newblock \emph{arXiv e-prints} (2010) arXiv:1002.4145.
	
	\bibitem[BPW17]{BPW17}
	Bo~{Berndtsson}, Mihai {Paun}, and Xu~{Wang}.
	\newblock \enquote{{Algebraic fiber spaces and curvature of higher direct
			images}.}
	\newblock \emph{arXiv e-prints} (2017) arXiv:1704.02279.
	
	\bibitem[Bro17]{Bro17}
	Damian Brotbek.
	\newblock \enquote{On the hyperbolicity of general hypersurfaces.}
	\newblock \emph{Publ. Math. Inst. Hautes \'Etudes Sci.} (2017) vol. 126: 1--34.
	\newblock \urlprefix\url{https://doi.org/10.1007/s10240-017-0090-3}.
	
	\bibitem[Cao19]{Cao16}
	Junyan Cao.
	\newblock \enquote{Albanese maps of projective manifolds with nef anticanonical
		bundles.}
	\newblock \emph{{Ann. Sci. \'Ec. Norm. Sup\'er. (4)}} (2019) vol.~52~(4):
	1137--1154.
	
	\bibitem[CDG19]{CDG19}
	Beno{\^\i}t {Cadorel}, Simone {Diverio}, and Henri {Guenancia}.
	\newblock \enquote{{On subvarieties of singular quotients of bounded domains}.}
	\newblock \emph{arXiv e-prints} (2019) arXiv:1905.04212.
	
	\bibitem[CK17]{CK17}
	Fabrizio {Catanese} and Yujiro {Kawamata}.
	\newblock \enquote{{Fujita decomposition over higher dimensional base}.}
	\newblock \emph{arXiv e-prints} (2017) arXiv:1709.08065.
	
	\bibitem[CKS86]{CKS86}
	Eduardo Cattani, Aroldo Kaplan, and Wilfried Schmid.
	\newblock \enquote{Degeneration of {H}odge structures.}
	\newblock \emph{Ann. of Math. (2)} (1986) vol. 123~(3): 457--535.
	\newblock \urlprefix\url{https://doi.org/10.2307/1971333}.
	
	\bibitem[CP15]{CP15b}
	Fr{\'e}d{\'e}ric {Campana} and Mihai {P{\u a}un}.
	\newblock \enquote{Orbifold generic semi-positivity: an application to families
		of canonically polarized manifolds.}
	\newblock \emph{Ann. Inst. Fourier (Grenoble)} (2015) vol.~65~(2): 835--861.
	\newblock \urlprefix\url{http://aif.cedram.org/item?id=AIF_2015__65_2_835_0}.
	
	\bibitem[CP16]{CP16}
	---{}---{}---.
	\newblock \enquote{Positivity properties of the bundle of logarithmic tensors
		on compact {K}{\" a}hler manifolds.}
	\newblock \emph{Compos. Math.} (2016) vol. 152~(11): 2350--2370.
	\newblock \urlprefix\url{http://dx.doi.org/10.1112/S0010437X16007442}.
		
	\bibitem[CP19]{CP15}
	---{}---{}---.
	\newblock \enquote{{Foliations with positive slopes and birational stability of
			orbifold cotangent bundles.}}
	\newblock \emph{{Publ. Math., Inst. Hautes \'Etud. Sci.}} (2019) vol. 129:
	1--49.
	
	\bibitem[CaoP17]{CP17}
	Junyan Cao and Mihai P\u{a}un.
	\newblock \enquote{Kodaira dimension of algebraic fiber spaces over abelian
		varieties.}
	\newblock \emph{Invent. Math.} (2017) vol. 207~(1): 345--387.
	\newblock \urlprefix\url{https://doi.org/10.1007/s00222-016-0672-6}.

	
	\bibitem[CRT19]{CRT19}
	Beno\^{\i}t Cadorel, Erwan Rousseau, and Behrouz Taji.
	\newblock \enquote{Hyperbolicity of singular spaces.}
	\newblock \emph{J. \'{E}c. polytech. Math.} (2019) vol.~6: 1--18.
	\newblock \urlprefix\url{http://dx.doi.org/10.5802/jep.85}.
	
	\bibitem[{Dem}97]{Dem97}
	Jean-Pierre {Demailly}.
	\newblock \enquote{Algebraic criteria for {K}obayashi hyperbolic projective
		varieties and jet differentials.}
	\newblock In \enquote{Algebraic geometry---{S}anta {C}ruz 1995,} vol.~62 of
	\emph{Proc. Sympos. Pure Math.}, 285--360,  (Amer. Math. Soc., Providence,
	RI1997).
	
	\bibitem[{Dem}20]{Dem18}
	---{}---{}---.
	\newblock \enquote{{Recent results on the Kobayashi and Green-Griffiths-Lang
			conjectures.}}
	\newblock \emph{{Jpn. J. Math. (3)}} (2020) vol.~15~(1): 1--120.
	
	\bibitem[{Den}17]{Den17}
	Ya~{Deng}.
	\newblock \enquote{{Applications of the Ohsawa-Takegoshi Extension Theorem to
			Direct Image Problems}.}
	\newblock \emph{arXiv e-prints} (2017) arXiv:1703.07279. To appear in \emph{
		Int. Math. Res. Not. IMRN}.
	
	\bibitem[{Den}18a]{Den18}
	---{}---{}---.
	\newblock \enquote{{Kobayashi hyperbolicity of moduli spaces of minimal
			projective manifolds of general type (with the appendix by Dan Abramovich)}.}
	\newblock \emph{arXiv e-prints} (2018) arXiv:1806.01666.
	
	\bibitem[{Den}18b]{Den18b}
	---{}---{}---.
	\newblock \enquote{{Pseudo Kobayashi hyperbolicity of base spaces of families
			of minimal projective manifolds with maximal variation}.}
	\newblock \emph{arXiv e-prints} (2018) arXiv:1809.05891.
	
	\bibitem[{Den}19]{Den19}
	---{}---{}---.
	\newblock \enquote{{Hyperbolicity of bases of log Calabi-Yau families}.}
	\newblock \emph{arXiv e-prints} (2019) arXiv:1901.04423.
	
	\bibitem[{Den}20]{Den20}
	---{}---{}---.
	\newblock \enquote{{Big Picard theorem and algebraic hyperbolicity for
			varieties admitting a variation of Hodge structures}.}
	\newblock \emph{arXiv e-prints} (2020) arXiv:2001.04426.
	
	\bibitem[DLSZ19]{DLSZ}
	Ya~{Deng}, Steven {Lu}, Ruiran {Sun}, and Kang {Zuo}.
	\newblock \enquote{{Picard theorems for moduli spaces of polarized varieties}.}
	\newblock \emph{arXiv e-prints} (2019) arXiv:1911.02973.
	
	\bibitem[DR18]{DR18}
	Ruadha{\'i} Dervan and Julius Ross.
	\newblock \enquote{Stable maps in higher dimensions.}
	\newblock \emph{Mathematische Annalen} (2018)
	\urlprefix\url{http://dx.doi.org/10.1007/s00208-018-1706-8}.
	
	\bibitem[GT84]{GT84}
	Phillip Griffiths and Loring Tu.
	\newblock \enquote{Curvature properties of the {H}odge bundles.}
	\newblock In \enquote{Topics in transcendental algebraic geometry ({P}rinceton,
		{N}.{J}., 1981/1982),} vol. 106 of \emph{Ann. of Math. Stud.}, 29--49,
	(Princeton Univ. Press, Princeton, NJ1984).
	
	\bibitem[Kar00]{Kar00}
	Kalle Karu.
	\newblock \enquote{Minimal models and boundedness of stable varieties.}
	\newblock \emph{J. Algebraic Geom.} (2000) vol.~9~(1): 93--109.
	
	\bibitem[Kas85]{Kas85}
	Masaki Kashiwara.
	\newblock \enquote{The asymptotic behavior of a variation of polarized {H}odge
		structure.}
	\newblock \emph{Publ. Res. Inst. Math. Sci.} (1985) vol.~21~(4): 853--875.
	\newblock \urlprefix\url{http://dx.doi.org/10.2977/prims/1195178935}.
	
	\bibitem[Kaw85]{Kaw85}
	Yujiro Kawamata.
	\newblock \enquote{Minimal models and the {K}odaira dimension of algebraic
		fiber spaces.}
	\newblock \emph{J. Reine Angew. Math.} (1985) vol. 363: 1--46.
	\newblock \urlprefix\url{http://dx.doi.org/10.1515/crll.1985.363.1}.
	
	\bibitem[KK08a]{KK08a}
	Stefan Kebekus and S\'{a}ndor~J. Kov\'{a}cs.
	\newblock \enquote{Families of canonically polarized varieties over surfaces.}
	\newblock \emph{Invent. Math.} (2008) vol. 172~(3): 657--682.
	\newblock \urlprefix\url{http://dx.doi.org/10.1007/s00222-008-0128-8}.
	
	\bibitem[KK08b]{KK08b}
	---{}---{}---.
	\newblock \enquote{Families of varieties of general type over compact bases.}
	\newblock \emph{Adv. Math.} (2008) vol. 218~(3): 649--652.
	\newblock \urlprefix\url{http://dx.doi.org/10.1016/j.aim.2008.01.005}.
	
	\bibitem[KK10]{KK10}
	---{}---{}---.
	\newblock \enquote{The structure of surfaces and threefolds mapping to the
		moduli stack of canonically polarized varieties.}
	\newblock \emph{Duke Math. J.} (2010) vol. 155~(1): 1--33.
	\newblock \urlprefix\url{http://dx.doi.org/10.1215/00127094-2010-049}.
	
	\bibitem[Kob98]{Kob98}
	Shoshichi Kobayashi.
	\newblock \emph{Hyperbolic complex spaces}, vol. 318 of \emph{Grundlehren der
		Mathematischen Wissenschaften [Fundamental Principles of Mathematical
		Sciences]},  (Springer-Verlag, Berlin1998).
	\newblock \urlprefix\url{https://doi.org/10.1007/978-3-662-03582-5}.
	
	\bibitem[KP17]{KP17}
	S\'{a}ndor~J. Kov\'{a}cs and Zsolt Patakfalvi.
	\newblock \enquote{Projectivity of the moduli space of stable log-varieties and
		subadditivity of log-{K}odaira dimension.}
	\newblock \emph{J. Amer. Math. Soc.} (2017) vol.~30~(4): 959--1021.
	\newblock \urlprefix\url{http://dx.doi.org/10.1090/jams/871}.
	
	\bibitem[KSB88]{KSB88}
	J{\'a}nos Koll{\'a}r and Nicholas~I. Shepherd-Barron.
	\newblock \enquote{Threefolds and deformations of surface singularities.}
	\newblock \emph{Invent. Math.} (1988) vol.~91~(2): 299--338.
	\newblock \urlprefix\url{http://dx.doi.org/10.1007/BF01389370}.
	
	\bibitem[KV04]{KV04}
	Andrew Kresch and Angelo Vistoli.
	\newblock \enquote{On coverings of {D}eligne-{M}umford stacks and surjectivity
		of the {B}rauer map.}
	\newblock \emph{Bull. London Math. Soc.} (2004) vol.~36~(2): 188--192.
	\newblock \urlprefix\url{http://dx.doi.org/10.1112/S0024609303002728}.
	
	\bibitem[Lan91]{Lan91}
	Serge Lang.
	\newblock \emph{Number theory. {III}}, vol.~60 of \emph{Encyclopaedia of
		Mathematical Sciences},  (Springer-Verlag, Berlin1991).
	\newblock \urlprefix\url{http://dx.doi.org/10.1007/978-3-642-58227-1}.
	\newblock Diophantine geometry.
	
	\bibitem[Mor87]{Mor85}
	Shigefumi Mori.
	\newblock \enquote{Classification of higher-dimensional varieties.}
	\newblock In \enquote{Algebraic geometry, {B}owdoin, 1985 ({B}runswick,
		{M}aine, 1985),} vol.~46 of \emph{Proc. Sympos. Pure Math.}, 269--331,
	(Amer. Math. Soc., Providence, RI1987).
	
	\bibitem[Pat12]{Pat12}
	Zsolt Patakfalvi.
	\newblock \enquote{Viehweg's hyperbolicity conjecture is true over compact
		bases.}
	\newblock \emph{Adv. Math.} (2012) vol. 229~(3): 1640--1642.
	\newblock \urlprefix\url{http://dx.doi.org/10.1016/j.aim.2011.12.013}.
	
	\bibitem[Pet84]{Pet84}
	C.~A.~M. Peters.
	\newblock \enquote{A criterion for flatness of {H}odge bundles over curves and
		geometric applications.}
	\newblock \emph{Math. Ann.} (1984) vol. 268~(1): 1--19.
	\newblock \urlprefix\url{http://dx.doi.org/10.1007/BF01463870}.
	
	\bibitem[PS17]{PS17}
	Mihnea Popa and Christian Schnell.
	\newblock \enquote{Viehweg's hyperbolicity conjecture for families with maximal
		variation.}
	\newblock \emph{Invent. Math.} (2017) vol. 208~(3): 677--713.
	\newblock \urlprefix\url{https://doi.org/10.1007/s00222-016-0698-9}.
	
	\bibitem[PT18]{PT14}
	Mihai P{\u{a}}un and Shigeharu Takayama.
	\newblock \enquote{Positivity of twisted relative pluricanonical bundles and
		their direct images.}
	\newblock \emph{J. Algebraic Geom.} (2018) vol.~27~(2): 211--272.
	\newblock \urlprefix\url{http://dx.doi.org/10.1090/jag/702}.
	
	\bibitem[PTW19]{PTW18}
	Mihnea {Popa}, Behrouz {Taji}, and Lei {Wu}.
	\newblock \enquote{Brody hyperbolicity of base spaces of certain families of
		varieties.}
	\newblock \emph{Algebra and Number Theory} (2019) vol.~13~(9): 2205--2242.
	\newblock \urlprefix\url{http://dx.doi.org/10.2140/ant.2019.13.2205}.
	
	\bibitem[Rou16]{Rou16}
	Erwan Rousseau.
	\newblock \enquote{Hyperbolicity, automorphic forms and {S}iegel modular
		varieties.}
	\newblock \emph{Ann. Sci. \'{E}c. Norm. Sup\'{e}r. (4)} (2016) vol.~49~(1):
	249--255.
	\newblock \urlprefix\url{http://dx.doi.org/10.24033/asens.2281}.
	
	\bibitem[Roy75]{Roy74}
	H.~L. Royden.
	\newblock \enquote{Intrinsic metrics on {T}eichm\"uller space.}
	\newblock \emph{Proceedings of the {I}nternational {C}ongress of
		{M}athematicians ({V}ancouver, {B}. {C}., 1974), {V}ol. 2} (1975) 217--221.
	
	\bibitem[Sch73]{Sch73}
	Wilfried Schmid.
	\newblock \enquote{Variation of {H}odge structure: the singularities of the
		period mapping.}
	\newblock \emph{Invent. Math.} (1973) vol.~22: 211--319.
	\newblock \urlprefix\url{https://doi.org/10.1007/BF01389674}.
	
	\bibitem[{Sch}12]{Sch12}
	Georg {Schumacher}.
	\newblock \enquote{Positivity of relative canonical bundles and applications.}
	\newblock \emph{Invent. Math.} (2012) vol. 190~(1): 1--56.
	\newblock \urlprefix\url{https://doi.org/10.1007/s00222-012-0374-7}.
	
		\bibitem[{Sch}18]{Sch17}
		---{}---{}---.
	\newblock \enquote{{Moduli of canonically polarized manifolds, higher order
			Kodaira-Spencer maps, and an analogy to Calabi-Yau manifolds.}}
	\newblock In \enquote{{Uniformization, Riemann-Hilbert correspondence,
			Calabi-Yau manifolds and Picard-Fuchs equations.}}, 369--399,  (Somerville,
	MA: International Press; Beijing: Higher Education Press2018).
	
	\bibitem[Schn17]{Schn17}
	Christian Schnell.
	\newblock \enquote{{On a theorem of Campana and P\u{a}un}.}
	\newblock \emph{{Épijournal de Géométrie Algébrique}} (2017) vol. {Volume
		1}.
	\newblock \urlprefix\url{https://epiga.episciences.org/3871}.
	

	
	\bibitem[Siu86]{Siu86}
	Yum-Tong Siu.
	\newblock \enquote{Curvature of the {W}eil-{P}etersson metric in the moduli
		space of compact {K}\"ahler-{E}instein manifolds of negative first {C}hern
		class.}
	\newblock In \enquote{Contributions to several complex variables,} Aspects
	Math., E9, 261--298,  (Friedr. Vieweg, Braunschweig1986).
	
	\bibitem[Siu15]{Siu15}
	---{}---{}---.
	\newblock \enquote{Hyperbolicity of generic high-degree hypersurfaces in
		complex projective space.}
	\newblock \emph{Invent. Math.} (2015) vol. 202~(3): 1069--1166.
	\newblock \urlprefix\url{http://dx.doi.org/10.1007/s00222-015-0584-x}.
	
	\bibitem[Ste77]{Ste77}
	Joseph Steenbrink.
	\newblock \enquote{Mixed {H}odge structure on the vanishing cohomology.}
	\newblock In \enquote{Real and complex singularities ({P}roc. {N}inth {N}ordic
		{S}ummer {S}chool/{NAVF} {S}ympos. {M}ath., {O}slo, 1976),} 525--563,
	(Sijthoff and Noordhoff, Alphen aan den Rijn1977).
	
	\bibitem[TY15]{TY14}
	Wing-Keung To and Sai-Kee Yeung.
	\newblock \enquote{Finsler metrics and {K}obayashi hyperbolicity of the moduli
		spaces of canonically polarized manifolds.}
	\newblock \emph{Ann. of Math. (2)} (2015) vol. 181~(2): 547--586.
	\newblock \urlprefix\url{https://doi.org/10.4007/annals.2015.181.2.3}.
	
	\bibitem[TY18]{TY18}
	---{}---{}---.
	\newblock \enquote{Augmented {W}eil-{P}etersson metrics on moduli spaces of
		polarized {R}icci-flat {K}\"{a}hler manifolds and orbifolds.}
	\newblock \emph{Asian J. Math.} (2018) vol.~22~(4): 705--727.
	\newblock \urlprefix\url{http://dx.doi.org/10.4310/AJM.2018.v22.n4.a6}.
	
	\bibitem[Vie90]{Vie90}
	Eckart Viehweg.
	\newblock \enquote{Weak positivity and the stability of certain {H}ilbert
		points. {II}.}
	\newblock \emph{Invent. Math.} (1990) vol. 101~(1): 191--223.
	\newblock \urlprefix\url{https://doi.org/10.1007/BF01231501}.
	
	\bibitem[Vie95]{Vie95}
	---{}---{}---.
	\newblock \emph{Quasi-projective moduli for polarized manifolds}, vol.~30 of
	\emph{Ergebnisse der Mathematik und ihrer Grenzgebiete (3) [Results in
		Mathematics and Related Areas (3)]},  (Springer-Verlag, Berlin1995).
	\newblock \urlprefix\url{https://doi.org/10.1007/978-3-642-79745-3}.
	
	\bibitem[VZ01]{VZ01}
	Eckart Viehweg and Kang Zuo.
	\newblock \enquote{On the isotriviality of families of projective manifolds
		over curves.}
	\newblock \emph{J. Algebraic Geom.} (2001) vol.~10~(4): 781--799.
	
	\bibitem[VZ02]{VZ02}
	---{}---{}---.
	\newblock \enquote{Base spaces of non-isotrivial families of smooth minimal
		models.}
	\newblock In \enquote{Complex geometry ({G}\"ottingen, 2000),} 279--328,
	(Springer, Berlin2002).
	
	\bibitem[VZ03]{VZ03}
	---{}---{}---.
	\newblock \enquote{On the {B}rody hyperbolicity of moduli spaces for
		canonically polarized manifolds.}
	\newblock \emph{Duke Math. J.} (2003) vol. 118~(1): 103--150.
	\newblock \urlprefix\url{https://doi.org/10.1215/S0012-7094-03-11815-3}.
	
	\bibitem[Wol86]{Wol86}
	Scott~A. Wolpert.
	\newblock \enquote{Chern forms and the {R}iemann tensor for the moduli space of
		curves.}
	\newblock \emph{Invent. Math.} (1986) vol.~85~(1): 119--145.
	\newblock \urlprefix\url{http://dx.doi.org/10.1007/BF01388794}.
	
	\bibitem[Yam19]{Yam18}
	Katsutoshi Yamanoi.
	\newblock \enquote{Pseudo {K}obayashi hyperbolicity of subvarieties of general
		type on abelian varieties.}
	\newblock \emph{J. Math. Soc. Japan} (2019) vol.~71~(1): 259--298.
	\newblock \urlprefix\url{http://dx.doi.org/10.2969/jmsj/75817581}.
	
	\bibitem[Zuc84]{Zuc84}
	Steven Zucker.
	\newblock \enquote{Degeneration of {H}odge bundles (after {S}teenbrink).}
	\newblock In \enquote{Topics in transcendental algebraic geometry ({P}rinceton,
		{N}.{J}., 1981/1982),} vol. 106 of \emph{Ann. of Math. Stud.}, 121--141,
	(Princeton Univ. Press, Princeton, NJ1984).
	
	\bibitem[Zuo00]{Zuo00}
	Kang Zuo.
	\newblock \enquote{On the negativity of kernels of {K}odaira-{S}pencer maps on
		{H}odge bundles and applications.}
	\newblock \emph{Asian J. Math.} (2000) vol.~4~(1): 279--301.
	\newblock \urlprefix\url{https://doi.org/10.4310/AJM.2000.v4.n1.a17}.
	\newblock Kodaira's issue.
	
\end{thebibliography}

\end{document}